\def\z{\mathbin{\zeta}}
\def\a{\mathbin{\alpha}}
\def\b{\mathbin{\beta}}
\def\w{\mathbin{\omega}}
\def\r{\mathbin{\gamma}}
\def\e{\mathbin{\varepsilon}}
\def\s{\mathbin{\varsigma}}
\def\p{\mathbin{\rho}}
\def\l{\mathbin{\partial}}
\def\n{\mathbin{\nabla}}
\def\G{\mathbin{\Gamma}}
\def\B{\mathbin{\mathcal B}}
\def\T{\mathbin{\mathcal T}}
\def\S{\mathbin{\mathcal S}}
\def\Q{\mathbin{\mathcal Q}}
\def\F{\mathbin{\mathcal F}}
\def\M{\mathbin{\mathcal M}}
\def\O{\mathbin{\Omega}}
\def\H{\mathbin{\mathbb H}}
\def\R{\mathbin{\mathbb R}}
\def\P{\mathbin{\mathcal P}}
\def\C{\mathbin{\mathbb C}}
\def\W{\mathbin{\mathcal W}}
\def\Z{\mathbin{\Sigma}}
\def\N{\mathbin{\mathbb N}}
\def\L{\mathbin{\mathcal L}}
\newtheorem{theorem}{Theorem}[section]
\newtheorem{corollary}[theorem]{Corollary}
\newtheorem{proposition}[theorem]{Proposition}
\newtheorem{lemma}[theorem]{Lemma}
\newtheorem{claim}[theorem]{Claim}
\newtheorem{definition}[theorem]{Definition}
\newtheorem{remark}[theorem]{Remark}
\title{Min-max free boundary minimal surface of genus $g\geq 1$}
\author{Yuchin Sun}
\begin{document}
	\maketitle 
\begin{abstract}
In this paper, we build up a min-max theory for minimal surfaces using sweepouts of surfaces of genus $g\geq 1$ and $m\geq 1$ ideal boundary components. We show that the width for the area functional can be achieved by a bubble tree limit consisting of branched genus $g$ free boundary minimal surfaces with nodes, and possibly finitely many branched minimal spheres and free boundary minimal disks. Our result extends the min-max theory by \cite{CD}\cite{LAX}\cite{PR} to all genera. 
\end{abstract}
\section{Introduction}
Existence theory of minimal surfaces is a fundamental object in Riemannian geometry. There are lots of interesting results concerning general existence theory using conformal harmonic parametrization. Sacks-Uhlenbeck developed a general existence theory for minimal surfaces in compact manifold using Morse theory for perturbed energy functional \cite{SU}. Micallef-Moore used Sacks-Uhlenbeck's theory to prove the topological sphere theorem \cite{MD}. Chen-Tian gave a general existence theorem for minimal surfaces of arbitrary genus by extending \cite{SU} to stratified Riemann surfaces. For free boundary minimal surfaces, Fraser \cite{AF} developed a partial Morse theory for finding minimal disk with free boundary in any co-dimensions using the perturbed energy functional approach by Sacks-Uhlenbeck, and proved the existence of solutions with bounded Morse index when the relative homotopy group is nontrivial. 

Colding and Minicozzi used min-max theory to construct min-max minimal spheres and proved the finite time extinction for three-dimensional Ricci flow under certain topological conditions \cite{CD}. Inspired by the work of Douglas and Rad\'{o} they replaced the area by the Dirichlet energy functionaly, and inspired by the Birkhoff's so-called curve-shortening procedure, gave a hramonic replacement procedure. Generalizations for tori and higher genus surfaces was performed by Zhou in \cite{minimaltorus}\cite{minmaxgenus}. Following the work by Colding and Minicozzi, the free boundary minimal disk was proved in \cite{LAX} by Zhou, Lin and Ao, and in \cite{PR} by Laurain and Petrides. In the free boundary case, in order to perform the harmonic replacement procedure, one needs to get the uniqueness of free boundary harmonic map with small energy with respect to the partial Dirichlet boundary data. It's much harder in the general case for free boundary harmonic map and was solved by Zhou, Lin and Ao in \cite{LAX}, and by Laurain and Petrides in \cite{PR}. In this paper, we extend the results of \cite{LAX} and \cite{PR} to all genera free boundary surfaces. Now we state the main result.

\subsection{Main Result}
Given a closed Riemannian manifold $M$ and an embedded submanifold $N$ of co-dimension at least one. Let $\S_0$ be a bordered Riemann surface of genus $g\geq 1$ and with $m\geq 1$ ideal boundary components, and none of the ideal boundary component is a puncture. We consider the following variational space:
\begin{equation*}
  \O = \left \{\r:\S_{0}\times[0,1]\to M,
  \begin{aligned}
     & \r(\cdot,t):[0,1]\to C^0(\bar{\S_{0}},M)\cap W^{1,2}(\S_{0},M)\text{ is continuous, }\\
     & \text{and } \r(\cdot,t)(\partial\S_{0})\subset N,\:\forall t \in[0,1], \\
     & \text{and }\r(\cdot,0), \r(\cdot,1)\text{ are constant maps.}
  \end{aligned} \right\},
\end{equation*}
and we call $\r\in\O$ a \textit{sweepout}. Denote by $\O_{\b}$ the homotopy class of $\b$ in $\O$. The width corresponding to a given homotopy class $\O_{\b}$ is defined to be \[\W(\O_{\b}):=\inf_{\r\in\O_{\b}}\max_{t\in[0,1]}\text{Area}(\r(t)).\]
A smooth bordered Riemann $\S_0$ is said to be of type $(g,m)$ if $\S_0$ is topologically a sphere attached with $g$ handles and $m$ disks removed. It is topologically equivalent to a compact surface of genus $g$ with $m$ punctures. A nodal bordered Riemann surface is of type $(g,m)$ if it is a degeneration of a smooth bordered Riemann surface of the same type.

\begin{theorem}\label{main}
For any homotopically nontrivial sweepout $\r\in\O$. If $\W(\O_{\r})>0$, then there exist a bordered Riemann surface of same type $(g,m)$ (possibly a nodal bordered Riemann surface), conformal harmonic map $u_0:\S\to M$ with free boundary $u_0|_{\l\S}\subset N$, finitely many harmonic spheres $v_i:S^2\to M$, finitely many free boundary minimal disk $\w_j:D\to N$ with $\w_j|_{\l D}\subset N$, and a minimizing sequence $\r_n$ in $\O_{\r}$, $\{t_n\}\subset[0,1]$, such that $\r_n(t_n)$ converges to $\{u_0,v_i,\w_j\}$ in the bubble tree sense, and
\begin{equation}\label{1}
\text{Area}(u_0)+\sum_{i,j}\text{Area}(v_i)+Area(\w_j)=\W(\O_{\r}).    
\end{equation}
\end{theorem}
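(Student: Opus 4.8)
The plan is to run the harmonic-replacement min-max scheme of Colding--Minicozzi \cite{CD}, incorporating the higher-genus refinements of Zhou \cite{minimaltorus}\cite{minmaxgenus} and the free boundary refinements of \cite{LAX} and \cite{PR}, and to combine the two. The first step is to pass from the area functional to the Dirichlet energy. I regard the conformal class on $\S_0$ as part of the sweepout data, so a slice becomes a pair consisting of a $W^{1,2}$ map with boundary in $N$ together with a point in the moduli space $\M$ of bordered conformal structures of type $(g,m)$; using $\text{Area}(u)\le E_c(u)$, with equality precisely when $u$ is weakly conformal for $c$, a now-standard argument identifies $\W(\O_{\r})$ with the energy width over such enriched sweepouts. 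Thus it suffices to take a minimizing sequence $\r_n$ with $\max_t E(\r_n(t))\to\W(\O_{\r})$ and extract the claimed limit from it.

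Next comes the tightening (pull-tight) step. The local tool is harmonic replacement: on a conformal disk $B\subset\S_0\setminus\l\S_0$ one replaces a map $u$ by the unique energy minimizer with the same trace on $\l B$, while on a conformal half-disk $B^{+}$ meeting $\l\S_0$ one replaces $u$ by the energy minimizer among maps that send the flat part of $\l B^{+}$ into $N$ and agree with $u$ on the curved part (free boundary harmonic replacement). Both operations do not increase energy; for interior disks the replacement depends continuously on $u$ once $E_B(u)$ is below the Colding--Minicozzi small-energy threshold at which the harmonic map with prescribed Dirichlet data is unique, and in the free boundary case the corresponding small-energy uniqueness for the mixed Dirichlet/free boundary problem is exactly the delicate statement established in \cite{LAX} and \cite{PR}, which I invoke. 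Iterating replacements over a fixed finite cover of $\S_0$ by disks and boundary half-disks, alternating between two overlapping subfamilies, and interpolating continuously in $t$ and in the conformal parameter, converts $\r_n$ into a new minimizing sequence $\tilde\r_n\in\O_{\r}$ with the property that every slice whose energy is close to $\W(\O_{\r})$ is $\e_n$-almost harmonic: harmonic replacement on any disk or half-disk of the cover changes its energy by at most $\e_n\to 0$.

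Then I extract the limit. Pick $t_n$ with $E(\tilde\r_n(t_n))\to\W(\O_{\r})$ and $\tilde\r_n(t_n)$ being $\e_n$-almost harmonic, and let $c_n\in\M$ be the associated conformal structures. If $c_n$ stays in a compact part of $\M$, then away from finitely many interior and boundary points of energy concentration the almost-harmonic maps subconverge in $W^{1,2}_{loc}$ (indeed $C^{\infty}_{loc}$, by elliptic regularity and the free boundary condition) to a harmonic map $u_0:\S\to M$ with $u_0|_{\l\S}\subset N$, and conformal rescalings at the concentration points produce the harmonic spheres $v_i:S^2\to M$ and the free boundary minimal disks $\w_j:(D,\l D)\to(M,N)$ in the usual bubble-tree fashion. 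If $c_n$ leaves every compact subset of $\M$, I pass to the Deligne--Mumford limit for bordered surfaces: long conformal collars (interior or boundary) develop, and on the complement a definite conformal structure survives on each component of a nodal bordered Riemann surface of type $(g,m)$, over which the convergence above yields the base map together with its bubbles, the collars contributing only through the resulting nodes. Conformality of $u_0$ follows from the energy-optimality in the conformal direction built into the construction: the Hopf differential of $u_0$ is a holomorphic quadratic differential arising as the limit of approximate Hopf differentials whose pairing against every infinitesimal deformation of $c_n$ tends to zero, hence it vanishes and $u_0$ is weakly conformal; the bubbles are conformal automatically. Finally, the no-neck-energy-loss property—on each neck region between a bubble and its base, between two bubbles, or across a degenerating collar or node, the energy tends to zero—gives
\begin{equation*}
E(u_0)+\sum_i E(v_i)+\sum_j E(\w_j)=\lim_n E(\tilde\r_n(t_n))=\W(\O_{\r}),
\end{equation*}
and combined with conformality $E=\text{Area}$ for each piece, which yields \eqref{1}. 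The hypothesis $\W(\O_{\r})>0$ forces the limit to be non-trivial, so that not every piece is a constant map.

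I expect the principal obstacle to be the simultaneous control of degeneration of the conformal structures and of boundary bubbling: one must handle interior and boundary concentration points, the appearance of interior and boundary nodes as the conformal structures degenerate, and the vanishing of energy in the long collars and half-collars, all while keeping the tightening compatible with the free boundary harmonic replacement, whose continuity itself rests on the subtle small-energy uniqueness of \cite{LAX} and \cite{PR}. Establishing the no-neck identity in the boundary and nodal regions is the technical heart of the argument.
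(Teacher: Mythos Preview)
Your overall strategy matches the paper's: identify the area and energy widths, tighten a minimizing sequence via interior and free-boundary harmonic replacement using the small-energy uniqueness of \cite{LAX,PR}, and then extract a bubble-tree limit while handling moduli degeneration and proving the no-neck energy identity on necks, half-necks, and collars. The identification of the technical heart---simultaneous control of conformal degeneration and boundary bubbling---is accurate.

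There is, however, one genuine difference worth flagging, in how conformality of $u_0$ is obtained. The paper does \emph{not} argue via the Hopf differential. Instead, it builds almost-conformality into the minimizing sequence \emph{before} tightening: by pulling back the target metric, perturbing to a nondegenerate metric, and uniformizing (Claims~\ref{regularization} and~\ref{uniformization}), one produces reparametrized sweepouts $(\r_j,\s_j)$ with $\text{Area}(\r_j(t))-E(\r_j(t),\s_j(t))\to 0$ uniformly in $t$ (Corollary~\ref{almost harmonic}). This condition is carried as a hypothesis through the convergence analysis and forces $E(u_0)=\text{Area}(u_0)$ in the limit, hence weak conformality. Your proposed route---showing that the limit Hopf differential pairs trivially with every infinitesimal deformation of $c_n$---would require an extra variational argument that the min-max sequence is approximately critical in the conformal direction, which is not automatic from tightening alone and which the paper sidesteps entirely by the explicit reparametrization. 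A second, smaller point: you describe replacement over ``a fixed finite cover of $\S_0$'', but since the conformal structure varies with $t$ and $j$, the paper performs replacement on geodesic balls in the varying hyperbolic surfaces $\S_{\s_j(t)}$ and compares them by lifting to a common fundamental domain in $\H$ (Claim~\ref{invariant of boundary} and the discussion opening Section~\ref{replacement}); this is what makes the continuity of the replacement in $t$ work and should not be elided.
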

The novelty of Theorem \ref{main} lies on the area identity. Since the width w.r.t. energy functional equals to the width w.r.t. the area functional (see Theorem \ref{width identity}), and the $u_0$ in Theorem \ref{main} is a conformal free boundary harmonic map, the area identity in \eqref{1} implies the energy identity. The possible loss of energy during the bubble tree convergence has attracted a lot of interests. The energy identity, equivalent to no loss of energy, has played an important role in the study of geometric analysis \cite{TP}\cite{CT}\cite{JJ}\cite{TQing}. In Theorem \ref{main}, the minimizing sequence $\r_n(t_n)$ is actually defined on domain with varying conformal structure. The energy identity of such sequence doesn't hold trure for general case, see \cite{TP} and \cite{Meow}. 

The organization of the paper is the following: in section 2 we review various definitions and properties of the Teichm\"{u}ller space of bordered Riemann Surfaces, in section 3 we define what's the homotopy equivalence for sweepout defined on different domain, in section 4 we prove that the width w.r.t. area functional is equivalent to the width w.r.t. energy functional, and we can perturb the minimizing sequence so that the energy and area of it is sufficiently close, in section 5 we build a systematic replacement procedure for free boundary harmonic replacement, in section 6 we state the convergence result, discuss the case where the domain surface degenerates and prove Theorem  \ref{main}.
\subsection*{Acknowledgments}
The author would like to express her gratitude to Xin Zhou and Longzhi Lin for all the helpful guidance and constant encouragement.
\section{Teichm\"{u}ller space of bordered Riemann Surfaces}

The main reference is \cite{Abikoff} and \cite{teichmuller}. All the materials presented in the section trace back to the books.
\subsection{Teichm\"{u}ler space of closed Riemann surfaces}
\begin{description}

\item[Marked Surface and Fuchsian Group] Given a fixed closed Riemann surface $\Z_0$ of genus $g\geq 2$, consider all the surfaces $(\Z,f)$, where. $f:\Z_0\to\Z$ is an orientation-preserving diffeomorphism. We say that $(\Z,f)$ and $(\Z',g)$ are equivalent if $g\circ f^{-1}:\Z\to\Z'$ is homotopic to a conformal diffeomorphism from $\Z$ to $\Z'$. We call $(\Z,f)$ a \textit{marked surface}. The set of all equivalent classes of marked surfaces $\{[(\Z,f)]\}$ is the Teichm\"{u}ller spaces $\T(\Z_0)$ of $\Z_0$. By the Uniformization Theorem, all the closed surfaces $\Z_0$ with genus $g\geq 2$ have their universal covering space the upper half plane $\H$. The covering transformatin group of $\pi:\H\to\Z_0$ is called \textit{Fuchsian group}, which will be denoted by $\G$ and $(\Z_{0},\G)$ is called \textit{Fuchsian model}. The holomorphic diffeomorphism group of $\H$ is PSL$(2,\R)$, so $\G\subset\text{PSL}(2,\R)$. Given a Fuchsian model $(\Z,\G)$, there's a set of normalized generators $\{\a_{i},\b_i\}_{i=1}^{g}$ for $\G$, where $\a_{g}$ has attractive fixed point at $1$ and $\b_{g}$ has repelling and attractive fixed point at $0$ and $\infty$ respectively. Moreover, the set of normalized generators is uniquely determined by the equivalent class in $\T(\Z_{0})$. $\{\a_i\}$, $\{\b_i\}$ can be uniquely written as $\a_i=\frac{a_iz+b_i}{c_iz+d_i}\in\text{PSL}(2,\R)$ and $\b_i=\frac{a_i'z+b_i'}{c_i'z+d_i'}\in\text{PSL}(2,\R)$. Hence we can define the \textit{Fricke coordinates}:  
$\F_{g}:\T(\Z_0)\to\R^{6g-6}$ as the following
\[\F_g([(\Z,f)])=(a_i,c_i,d_i,a_i',c_i',d_i')_{i=1}^{g-1}.\]
$\F_g$ is injective by \cite[Theorem 2.25]{teichmuller}. Hence we have an induced topology on $\T(\Z)$ by the Fricke coordinates.  

\item[Quasi-conformal Map]
We introduce quasi-conformal maps and combine the marked surface model with the quasi-conformal maps. Let $f:\Z_{0}\to\Z$ be an orientation-preserving diffeomorphism between two Riemann surfaces. Given local complex coordinates $(z,\bar{z})$, $(w,\bar{w})$ on $\Z_0$ and $\Z$. Denote $f(z)=w\circ f\circ z$. The Beltrami coefficient $\mu_f$ is defined by $\frac{f_{\bar{z}}}{f_z}$. If $|\mu_f|\leq k<1$, then we call such $f$ a quasi-conformal map. Given a marked surface $(\Z,f)$, $f:\Z_{0}\to\Z$. By \cite[Theorem 5.8]{teichmuller}, there exists a unique holomorphic quadratic differential $\phi$ on $\Z_0$ with $L^1$-norm $\|\phi\|$ less than $1$, and a unique quasi-conformal map $f_\phi :\Z_{0}\to\Z$ homotopic to $f$, such that $\mu_{f_\phi}=\|\phi\|\frac{\bar{\phi}}{|\phi|}$. Denote the set of all holomorphic quadratic differentials on $\Z_0$ with $L^1$-norm less than one by $\Q^1_{\Z_0}$. 
\begin{theorem}\cite[Theorem 5.15]{teichmuller}
$\mathcal{F}:\Q_{\Z_{0}}^{1}\to\T(\Z_{0})$ defined by $\mathcal{F}(\phi)=[(\Z ,f_\phi )]$ is a homeomorphism.
\end{theorem}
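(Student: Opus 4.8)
\emph{Proof proposal.}
The plan is to exhibit $\mathcal F$ as a continuous bijection and then upgrade it to a homeomorphism by a dimension count together with invariance of domain. The space of holomorphic quadratic differentials on $\Z_0$ has complex dimension $3g-3$ (Riemann--Roch), so $\Q^1_{\Z_0}$, its open unit ball, is an open subset of $\R^{6g-6}$; on the other side, the Fricke map $\F_g$ is an injection of $\T(\Z_0)$ into $\R^{6g-6}$ and, by construction, a homeomorphism onto its image (the topology on $\T(\Z_0)$ being the one induced by $\F_g$). Hence, once $\mathcal F$ is known to be injective and continuous, the composite $\F_g\circ\mathcal F$ is a continuous injection from an open subset of $\R^{6g-6}$ into $\R^{6g-6}$, so by invariance of domain it is a homeomorphism onto an open image; if moreover $\mathcal F$ is surjective, then $\F_g\circ\mathcal F$ is a homeomorphism onto $\F_g(\T(\Z_0))$, and therefore $\mathcal F=\F_g^{-1}\circ(\F_g\circ\mathcal F)$ is a homeomorphism. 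It thus suffices to prove (i) injectivity, (ii) surjectivity, and (iii) continuity of $\mathcal F$.

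For (i) and (ii) I would only invoke the existence and uniqueness of the extremal (Teichm\"{u}ller) quasiconformal map in a homotopy class, which already underlies the cited \cite[Theorem 5.8]{teichmuller}. Injectivity: if $\mathcal F(\phi_1)=\mathcal F(\phi_2)$, then $f_{\phi_2}\circ f_{\phi_1}^{-1}$ is homotopic to a conformal map, so $f_{\phi_1}$ and $f_{\phi_2}$ are extremal representatives of homotopy classes differing by a conformal change of target; uniqueness of the extremal map forces $\mu_{f_{\phi_1}}=\mu_{f_{\phi_2}}$, i.e. $\|\phi_1\|\tfrac{\bar\phi_1}{|\phi_1|}=\|\phi_2\|\tfrac{\bar\phi_2}{|\phi_2|}$. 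Taking moduli gives $\|\phi_1\|=\|\phi_2\|$, whence $\phi_1/\phi_2$ is a positive real-valued meromorphic function, hence a positive constant, hence equal to $1$ by the equality of $L^1$-norms; so $\phi_1=\phi_2$. Surjectivity: given $[(\Z,f)]\in\T(\Z_0)$, let $f^{*}$ be the extremal map homotopic to $f$; its Beltrami coefficient has the Teichm\"{u}ller form $k\tfrac{\bar\phi}{|\phi|}$ with $\|\phi\|=1$ and $k=\|\mu_{f^{*}}\|_{\infty}<1$ (with $\phi=0$, $k=0$ for the base point), and then $k\phi\in\Q^1_{\Z_0}$ satisfies $\mathcal F(k\phi)=[(\Z,f)]$.

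The substantive point is (iii). Take $\phi_n\to\phi$ in $\Q^1_{\Z_0}$ and set $\mu_n=\|\phi_n\|\tfrac{\bar\phi_n}{|\phi_n|}$, $\mu=\|\phi\|\tfrac{\bar\phi}{|\phi|}$, lifted to $\G$-invariant Beltrami differentials on $\H$. The map $\phi\mapsto\bar\phi/|\phi|$ is not continuous at the zeros of $\phi$ nor at $\phi=0$, so one cannot hope for $\mu_n\to\mu$ in $L^\infty$; but it does hold in every $L^p$, $p<\infty$: if $\phi\neq 0$, then away from its finitely many zeros $\phi_n\to\phi$ locally uniformly with $\phi$ locally bounded away from $0$, so the unimodular factors converge locally uniformly off a null set, while $\|\mu_n\|_\infty=\|\phi_n\|$ is eventually bounded by some $k'<1$, and dominated convergence gives $\mu_n\to\mu$ in $L^p$; if $\phi=0$, then $\|\mu_n\|_\infty=\|\phi_n\|\to0$ directly. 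Now I would invoke the $L^p$-continuity of solutions of the Beltrami equation (the measurable Riemann mapping theorem with continuous dependence on the dilatation): the normalized solutions $w^{\mu_n}$ on $\H$ converge locally uniformly to $w^{\mu}$. Conjugating the normalized generators $\{\a_i,\b_i\}$ of $\G$ by $w^{\mu_n}$ produces the normalized generators of the Fuchsian group representing $\mathcal F(\phi_n)$, and these converge in $\text{PSL}(2,\R)$ to those of $\mathcal F(\phi)$; hence $\F_g(\mathcal F(\phi_n))\to\F_g(\mathcal F(\phi))$, i.e. $\mathcal F(\phi_n)\to\mathcal F(\phi)$ in $\T(\Z_0)$.

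The main obstacle is precisely this last item: the failure of $L^\infty$-continuity of $\phi\mapsto\mu_\phi$ near the zeros of $\phi$ and near $\phi=0$. The remedy is to settle for $L^p$ convergence of the Beltrami coefficients and to rely on the $L^p$-dependence (not merely $L^\infty$-dependence) of normalized solutions of the Beltrami equation on their dilatations; everything else rests on the already-cited existence and uniqueness of extremal maps plus the standard invariance-of-domain argument. Alternatively, continuity can be read off from the Teichm\"{u}ller-metric description of $\T(\Z_0)$ once its equivalence with the Fricke topology is established, but the $L^p$ route keeps the argument self-contained.
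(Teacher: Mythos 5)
The paper does not prove this statement; it is imported verbatim as \cite[Theorem 5.15]{teichmuller}, so there is no in-paper argument to compare against. Your sketch is a correct rendering of the standard proof. The decomposition into (i) injectivity and (ii) surjectivity via the Teichm\"{u}ller existence/uniqueness theorems, and (iii) continuity via $L^p$-convergence of the Beltrami coefficients $\mu_{\phi}=\|\phi\|\bar{\phi}/|\phi|$ together with Ahlfors--Bers continuous dependence of normalized solutions of the Beltrami equation, is exactly right, and you correctly isolate the only genuine technical wrinkle: $\phi\mapsto\bar{\phi}/|\phi|$ is not $L^{\infty}$-continuous at the zeros of $\phi$ or at $\phi=0$, but one gets $L^p$-convergence with a uniform $L^{\infty}$ bound $k'<1$, which suffices. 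The injectivity argument (a real-valued meromorphic function $\phi_1/\phi_2$ must be a positive constant, pinned to $1$ by equal $L^1$-norms) is sound. Your use of invariance of domain to upgrade the continuous bijection to a homeomorphism, rather than proving continuity of $\mathcal{F}^{-1}$ directly as \cite{teichmuller} does, is a clean and legitimate variant; it relies on $\Q^1_{\Z_0}$ being an open subset of $\R^{6g-6}$ and on $\F_g$ being a homeomorphism onto its image, both of which the paper's setup (Riemann--Roch dimension count, Fricke topology induced by $\F_g$) provides. The invariance-of-domain route buys you economy; the direct route in \cite{teichmuller} is more self-contained and does not depend on knowing the dimension of the space of quadratic differentials in advance.
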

\begin{remark}\label{homeo}
Since the map $\F$ is a homeomorphism and the equivalent class of each marked surface uniquely determines a set of normalized generators, then the map defined by 
\[\Phi:\Q^1_{\Z_0}\to\F_g(\T(\Z_0)),\quad\Phi(\phi)=\F_g([(\Z,f_\phi)])\]
is also a homeomorphism.
\end{remark}
 So we know that for a given $\phi\in\Q^{1}_{\Z_{0}}$, there exists the corresponding equivalent class of a marked surface $\mathcal{F}(\phi)=[(\Z,f_{\phi})]$ and the corresponding normalized Fuchsian model $(\Z_{0},\G).$
\end{description}

\subsection{Teichm\"{u}ler space of bordered Riemann surfaces}\label{bordered riemann surface}

Now we consider Riemann surface with boundary. Let $\S$ be a bordered Riemann surface of genus $g$ and $m$ ideal boundary components, and none of the ideal boundary component is a puncture. Let $\pi:\H\to\S$ be the universal cover map. Given an ideal boundary component $\z$, we can take a simple closed curve $C$ surrounding $\z$ and the homotopy class $[C]$ determine a deck transformation $\b$ in $\G$ which is the deck transformation group of $\pi:\H\to\S$. The neighborhood of $\z$ is conformal to an annulus and $\b$ is hyperbolic. We can lift $C$ to a $\b$ invariant curve $\Tilde{C}$ joining two fixed points of $\b$. Since $C$ is separating $\S$, $\Tilde{C}$ is separating $\H$. One of the components $A$ of $H\setminus\Tilde{C}$ projects to a neighborhood of $\z$ which is an annulus. The boundary $\partial A$ of $A$ lies on $\hat{\R}$. If we add those boundary components to $A$ then they form the bordered component in the projection under $\pi$. $\S$ can be viewed as surface with boundary whose interior $\S^0$ has the structure of a Riemann surface. We call $\S\setminus\S^0$ the \textit{border} of $\S$.

We define the double $\S^d$ of $\S$ to be $\S\cup\bar{\S}$ with corresponding points one the border identified, where $\bar{\S}$ is the mirror image of $\S$. More precisely, To each $x\in\S$, assign a point $\bar{x}=\sigma(x)$. If $x\in\S^0$, let $\S^0\supset N\xrightarrow{z_{\a}}\C$ be a chart at $x$. As a chart at $\bar{x}$ take $\bar{N}=\{\bar{x}|\:\sigma^{-1}(\bar{x})\in N\}$ with local coordinate $\bar{z}_{\a}=J\circ z_{\a}\circ \sigma^{-1}$ where $J:\C\to\C$ is the complex conjugation. For $x\in\partial\S$, we use the obvious topology. This defines the mirror image bordered surface $\bar{\S}$ of $\S$. Then $\S^d=\S\cup_\sigma\bar{\S}$.

Now we focus on the case $g\geq 1$ and $m\geq 1$ so that the double $\S^d$ carries a hyperbolic structure and $i$ is covered by an isometry of the universal cover of $\S^d$. 
\begin{claim}\label{border geodesic}
Each border component $\partial\S$ of $\S$ is the geodesic in its free homotopy class on $\S^d$. 
\end{claim}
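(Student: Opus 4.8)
The plan is to exploit the reflection symmetry of the double $\S^d$. Let $\sigma:\S^d\to\S^d$ denote the anti-holomorphic involution whose fixed point set is precisely the border $\partial\S$; by construction $\S^d=\S\cup_\sigma\bar\S$ and $\sigma$ interchanges the two halves. Since $g\geq 1$ and $m\geq 1$, the double $\S^d$ is a closed hyperbolic surface, so it carries a unique complete hyperbolic metric $h$ in its conformal class. The first step is to observe that $\sigma^*h$ is again a hyperbolic metric conformal to the original conformal structure of $\S^d$ (because $\sigma$ is anti-holomorphic, it preserves the conformal structure up to orientation, and curvature $-1$ is preserved); by uniqueness of the hyperbolic metric, $\sigma^*h=h$, i.e. $\sigma$ is an isometry of $(\S^d,h)$. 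This is the key rigidity input and matches the remark in the text that ``$i$ is covered by an isometry of the universal cover.''

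Next I would fix a border component $\gamma:=\partial_k\S$, one of the connected components of $\mathrm{Fix}(\sigma)$, which is a smooth simple closed curve on $\S^d$. Because $\sigma$ is an isometric involution, its fixed point set is a totally geodesic submanifold of $(\S^d,h)$; in dimension two a one–dimensional totally geodesic submanifold is exactly a geodesic. Hence each border component $\gamma$ is a closed geodesic of $(\S^d,h)$. To see that it is \emph{the} geodesic in its free homotopy class, I would invoke the standard fact that on a closed hyperbolic surface every nontrivial free homotopy class of loops contains a unique closed geodesic (the axis of the corresponding hyperbolic deck transformation, projected down). The class $[\gamma]$ is nontrivial: the deck transformation $\beta\in\G$ associated to the ideal boundary component surrounding $\z$ is hyperbolic, as noted in the construction above, so $[\gamma]$ is not the trivial class and not a power of a shorter loop in a way that would destroy uniqueness—uniqueness of the geodesic representative holds for every nontrivial class regardless. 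Combining the two facts—$\gamma$ is a closed geodesic, and its class contains a unique closed geodesic—yields that $\gamma$ is the geodesic in its free homotopy class on $\S^d$.

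Alternatively, and perhaps more hands-on, I could argue at the level of the universal cover $\H$: lift $\sigma$ to an anti-holomorphic involution $\tilde\sigma$ of $\H$ normalizing the Fuchsian group of $\S^d$; after conjugating in $\mathrm{PSL}(2,\R)$ one may assume $\tilde\sigma(z)=-\bar z$, whose fixed set is the imaginary axis, a geodesic of $\H$. A connected component of the preimage of $\gamma$ is then a component of $\mathrm{Fix}(\tilde\sigma)$ mapped onto itself, hence a full geodesic of $\H$; projecting back and using that the axis of the hyperbolic element $\beta$ stabilizing this lift is exactly this geodesic gives the claim, together with the fact that the axis projects to the unique geodesic in the free homotopy class $[\beta]$.

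The main obstacle, and the only point requiring genuine care, is the rigidity step: verifying that $\sigma$ is actually an \emph{isometry} of the hyperbolic double, not merely a conformal automorphism. This rests on uniqueness of the hyperbolic metric in a conformal class on a closed surface of negative Euler characteristic (which needs $g\geq 1,\ m\geq1$, the hypothesis singled out just before the claim) together with the fact that pulling back by an anti-holomorphic map preserves both the conformal class and the sign/value of the Gauss curvature. Once isometry is in hand, ``fixed set of an isometric involution is totally geodesic'' and ``unique geodesic per free homotopy class on a hyperbolic surface'' are standard, and the proof is essentially immediate. I would also remark that the same argument simultaneously handles all $m$ border components, since $\mathrm{Fix}(\sigma)=\partial\S$ as a whole.
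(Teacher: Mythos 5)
Your proof is correct, and it fills in an argument the paper leaves implicit: the paper only remarks, just before the claim, that the involution is covered by an isometry of the universal cover, and then states the claim without a proof environment. Your argument supplies exactly the missing chain of reasoning — uniqueness of the hyperbolic metric in the conformal class of the closed double $\S^d$ (valid since $2g+m-1\geq 2$), anti-holomorphicity of $\sigma$ implying $\sigma^*h=h$, fixed set of an isometric involution being totally geodesic (hence a closed geodesic in dimension two), and uniqueness of the closed geodesic in each nontrivial free homotopy class. The nontriviality of $[\gamma]$ via hyperbolicity of the corresponding deck transformation is the right input, and your aside about powers of loops is unneeded since uniqueness of the geodesic representative holds for every nontrivial class. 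Your alternative universal-cover argument, identifying the lifted fixed set with the imaginary axis and noting it is the axis of the hyperbolic element, is the cleanest way to see that the fixed geodesic \emph{is} the free-homotopy representative rather than merely a geodesic, and is essentially what the paper's later manipulations on $\H$ presuppose.
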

\begin{definition}\label{admissible diff}
We call $\w$ an admissible quadratic differential on $\S$ if $\w$ is the restriction to $\S$ of a holomorphic quadratic differential $\w^d$ on the doubled surface $\S^d$ such that $\int_{\partial\S}\sqrt{\w}\in\R$. We denote the space of all the admissible quadratic differentials on $\S$ by $\Q_{\S}$.
\end{definition}
Let $\w$ be an admissible quadratic differential on $\S$. Away from zeros of $\w$ we can define a local coordinate by
\[\xi+i\eta=\int\sqrt{w}.\]
If we let $\xi'+i\eta'=K\xi+i\eta$ be another local coordinates, where $K>1$ is a constant, we can define a new conformal structure on $\S$ away from zeros. Such conformal structure extends to the zeros so we have a new conformal structure $(\S,\w,k)$ with $k=\frac{K-1}{K+1}$. We call this new conformal structure \textit{Teichm\"{u}ller deformation} of $\S$ given by $\w$ and $k$. As above, $\Q_{\S}$ denote the space of admissible quadratic differentials on $\S$. Using the norm $\|w\|=\int_{\S}|w|$, we denote $\Q_{\S}^1$ to be the open unit ball of $\Q_{\S}$.
\begin{definition}\label{teichmuller space}
The Teichm\"{u}ller space $\T(\S)$ on a bordered Riemann surface $\S$ is defined as the set of all the Teichm\"{u}ller deformation $(\S,\frac{\w}{\|\w\|},\|w\|)$ by elements $\w\in\Q_{\S}^1$ with the topology induced by that on $\Q^1_{\S}$. 
\end{definition}

As we mentioned before, there are various definitions for the Teichm\"{u}ller space of closed Riemann surface, now we compare those definitions with Definition \ref{teichmuller space}. Given $\w\in\Q_{\S}^1$, by Definition \ref{admissible diff} $w$ is the restriction to $\S$ of a holomorphic quadratic differential $\w^d$ on its double $\S^d$. By the previous discussion of Teichm\"{u}ler space for closed Riemann surface with genus $g>1$, we can normalize $\w^d$ so that the $L^1$ norm of it is strictly less than one (we still denote the normalized holomorphic quadratic differential by $\w^d$). There exists a unique quasi-conformal map $f_{\w^d}:\S^{d}\to\bar{\S}^{d}$, where $\bar{\S}^{d}$ is another closed Riemann surface with same genus as $\S^d$, such that its Beltrami coefficient is equivalent to $\|\w^{d}\|\frac{\bar{\w}^d}{|\w^{d}|}$. The corresponding marked surface $\mathcal{F}(\phi)=[(\bar{\S}^{d},f_{\w^d})]$ has the same conformal structure as the Teichm\"{u}ller deformation  $(\S^{d},\frac{\w^{d}}{\|\w^{d}\|},\|\w^{d}\|)$. We can view the restriction of $f_{\w^d}$ on $\S$ as the corresponding quasi-conformal map for $\w\in\Q_{\S}^1$. 

We can also choose the corresponding Fuchsian group for $\S$ from the Fuchsian group of its double $\S^d$. By the previous discussion, there is a unique normalized Fuchsian group $\G$ such that $\S^d=\H/\G$. Now we lift the quasi-conformal map $f_{\w^d}$ up to $\H$ by the covering maps $\pi:\H\to\S^d$ and $\bar{\pi}=\H\to\bar{\S}^d$ to get $\tilde{f}_{\w^d}:\H\to\H$. The fundamental group $\pi_1(\S)$ injects into $\G$ and its image $\G'$ stabilizes a simply connected component $\H'$ of $\H\setminus\pi^{-1}(\partial\S)$. Then we get $\S=\H'/\G'$. By the above discussion we also get the conformal structure of the following Teichm\"{u}ller deformation $(\S,\frac{\w}{\|\w\|},\|\w\|)$ is equivalent to $\tilde{f}_{\w^d}(\H')/\tilde{f}_{\w^d}\circ\G'\circ\tilde{f}_{\w^d}^{-1}$.
\section{Homotopy equivalence of the variational space}
Given a closed Riemannian manifold $(M,g)$, isometrically embedded in $\R^N$, and an embedded submanifold $N$ of co-dimension at least one. Let $\S_0$ be a fixed Riemann surface of genus $g\geq 1$ with $m\geq 1$ ideal boundary components, none of the ideal boundary components is a puncture, so that the double $\S_{0}^d$ carries a hyperbolic structure. Let $\s:[0,1]\to\T(\S_{0})$ denote the continuous map to $\T(\S_{0})$ with respect to the $L^1$ topology of $\Q_{\S_{0}}^1$. From Definition \ref{teichmuller space} we know that for each $\s(t)\in\T(\S_{0})$, there is a corresponding $\w(t)\in\Q_{\S_{0}}^1$ such that $\s(t)=(\S_{0},\frac{\w(t)}{\|\w(t)\|},\|\w(t)\|)$. Since $\w(t)$ is the restriction to $\S_{0}$ of a holomorphic quadratic $\w^{d}(t)$ on its double $\S^{d}$, we can normalize $\w^d$ so that $\w^d\in\Q^{1}_{\S_{0}^d}$. There exists a unique quasi-conformal map $f_{\s(t)}$ such that its Beltrami coefficient is equivalent to $\|\w^{d}\|\frac{\bar{\w^d}}{|\w^d|}$. Moreover, $f_{\s(t)}(\S_{0}^d)$ has the same conformal structure as $(\S^{d}_{0},\frac{\w^{d}(t)}{\|\w^{d}(t)\|},\|\w^{d}(t)\|)$, and $f_{\s(t)}(\S_{0})$ has the same conformal structure as $\S_{\s(t)}=(\S_{0},\frac{\w(t)}{\|\w(t)\|},\|\w(t)\|)$. The Teichm\"{u}ller deformation $\s(t)=(\S_{0},\frac{\w(t)}{\|\w(t)\|},\|\w(t)\|)$ also corresponds to the image of quasi-conformal map $f_{\s(t)}$ restricted on $\S_0$, i.e., $f_{\s(t)}(\S_{0})$. We denote $f_{\s(t)}(\S_{0})$ by $\S_{\s(t)}$. 
\begin{definition}\label{variational space}
The variational spaces are defined as 
\begin{equation*}
  \O = \left \{\r:\S_{0}\times[0,1]\to M,
  \begin{aligned}
     & \r(\cdot,t):[0,1]\to C^0(\bar{\S_{0}},M)\cap W^{1,2}(\S_{0},M)\text{ is continuous, }\\
     & \text{and } \r(\cdot,t)(\partial\S_{0})\subset N,\:\forall t \in[0,1], \\
     & \text{and }\r(\cdot,0), \r(\cdot,1)\text{ are constant maps.}
  \end{aligned} \right\},
\end{equation*} 
and
\begin{equation*}
  \Tilde{\O} = \left \{(\r(t),\s(t)),\:
  \begin{aligned}
     & \r(\cdot, t):[0,1]\to C^0(\bar{\S}_{\s(t)}(t),M)\cap W^{1,2}(\S_{\s(t)},M)\text{ is continuous, }\\
     & \text{and }
     \s\in C^0([0,1],\T(\S_{0})),\\
     & \text{and } \r(\cdot,t)(\partial\S_{\s(t)})\subset N,\:\forall t \in[0,1], \\
     & \text{and }\r(\cdot,0), \r(\cdot,1)\text{ are constant maps.}
  \end{aligned} \right\}.
\end{equation*} 
Given a map $\r\in\O$, we define $\O_{\r}$ to be the homotopy class of $\r$ in $\O$. Similarly, for $(\r,\s)\in\tilde{\O}$, we denote $\tilde{\O}_{(\r,\s)}$ to be the homotopy class of $(\r,\s)$ in $\tilde{\O}$. Each $\r\in\O$ will be called a sweepout.
\end{definition}
For a given $(\r,\s)\in\tilde{\O}$, the continuity of $\r(\cdot, t)\in C^0\big([0,1],C^0(\bar{\S}_{\s(t)},M)\cap W^{1,2}(\S_{\s(t)},M)\big)$ with respect to $t$ is defined as the following: for each $\S_{\s(t)}$ there exists a corresponding quasi-conformal map $f_{\s(t)}$. We can pull the path $\r(\cdot,t):\S_{\s(t)}\to M$ back to $\S_0$, i.e., \[f^*_{\s(t)}(\r(\cdot,t))=\r(\cdot,t)\circ f_{\s(t)}:\S_{0}\to M.\]
The continuity of $\r(\cdot,t)$ with respect to $t$ is defined as the continuity of the path $f^*_{\s(t)}(\r(\cdot,t))$ with respect to $t$ on the same domain $\S_0$. 

On the other hand, we define what is the homotopy equivalence in $\tilde{\O}$ with different domain $\S_{\s(t)}$. Given two elements $\{(\r_i(t),\s_i(t)):\:i=1,2\}$. As above, we use the quasi-conformal map $f_{\s_{i}(t)}:\S_{0}\to\S_{\s_i(t)}$, $i=1,2$ to pull the path $\r_i(\cdot,t):\S_{\s_i(t)}\to M$ back to $\S_0$, i.e., $f^*_{\s_i(t)}\r_i(\cdot,t)=\r_i(\cdot,t)\circ f_{\s_i(t)}:\S_{0}\to M$. We say that
$\{(\r_1(t),\s_1(t))\}$ is homotopic to $\{(\r_2(t),\s_2(t))\}$ if $f^*_{\s_1(t)}\r_1(\cdot,t)$ is homotopic to $f^*_{\s_2(t)}\r_2(\cdot,t)$.
\begin{definition}[Width]
We denote $E(\cdot)$ and $\text{Area}(\cdot)$ as the Dirichlet energy and area functional. Associated to each homotopy class $\O_{\r}$, we define the width of area functional to be the following:
\[\W(\O_{\r}):=\inf_{\p\in\O_{\r}}\max_{t\in[0,1]}\text{Area}(\p(t)).\]
Similarly, for a homotopy class $\tilde{\O}_{(\r,\s)}$, the corresponding width for energy functional is defined to be the following:
\[\W_{E}(\tilde{\O}_{(\r,\s)}):=\inf_{(\p,\s)\in{\tilde\O_{(\r,
\s)}}}\max_{t\in[0,1]}E(\p(t),\S_{\s(t)}).\]
\end{definition}
\begin{definition}
Given a sweepout $\tilde{\r}\in\O$, it has the corresponding homotopy class $\O_{\tilde{r}}$. We called a sequence of sweepouts $\{\r_j\}_{j\in\N}\subset\O_{\tilde{\r}}$ a minimizing sequence if
\[\lim_{j\to\infty}\max_{t\in[0,1]}\text{Area}(\r_j(t))=\W(\O_{\tilde{\r}}).\]
\end{definition}
\section{Conformal Parametrization}
\begin{theorem}\label{width identity}
Given a sweepout $\tilde{\r}\in\O$, we have
\[\W_{E}(\tilde{\O}_{(\tilde{\r},\s_0)})=\W(\O_{\tilde{\r}}),\]
where $\s_0$ is the constant map $\s_0(t)=\S_0$ for all $t\in[0,1]$.
\end{theorem}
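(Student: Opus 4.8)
The plan is to establish the two inequalities $\W(\O_{\tilde\r})\le\W_{E}(\tilde\O_{(\tilde\r,\s_{0})})$ and $\W_{E}(\tilde\O_{(\tilde\r,\s_{0})})\le\W(\O_{\tilde\r})$ separately. Two elementary facts are used throughout: for a map $u$ from a Riemann surface one has $\text{Area}(u)\le E(u)$, with defect $E(u)-\text{Area}(u)=\tfrac12\int(\sqrt{a}-\sqrt{b})^{2}$, where $a\ge b\ge 0$ are the eigenvalues, relative to the background conformal metric, of the pulled-back metric $u^{*}g$ of the target; and $\text{Area}$ is invariant under every change of conformal structure on the domain and under reparametrization of the domain by a quasi-conformal homeomorphism, while $E$ is conformally invariant in real dimension two.

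The first inequality is the easy one. Fix $\varepsilon>0$ and choose $(\p,\s)\in\tilde\O_{(\tilde\r,\s_{0})}$ with $\max_{t}E(\p(t),\S_{\s(t)})\le\W_{E}(\tilde\O_{(\tilde\r,\s_{0})})+\varepsilon$. Using the Teichm\"uller maps $f_{\s(t)}\colon\S_{0}\to\S_{\s(t)}$ of Section~\ref{bordered riemann surface}, set $\hat\r(t):=\p(t)\circ f_{\s(t)}$. By the definitions of continuity and of homotopy in $\tilde\O$ this is an element $\hat\r\in\O$, and since $f_{\s_{0}(t)}$, the Teichm\"uller map of the trivial deformation, is homotopic to $\mathrm{id}_{\S_{0}}$, one has $f_{\s}^{*}\p\simeq f_{\s_{0}}^{*}\tilde\r=\tilde\r$ in $\O$, i.e. $\hat\r\in\O_{\tilde\r}$. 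Since $\text{Area}(\hat\r(t))=\text{Area}(\p(t),\S_{\s(t)})\le E(\p(t),\S_{\s(t)})$, we get $\W(\O_{\tilde\r})\le\max_{t}\text{Area}(\hat\r(t))\le\W_{E}(\tilde\O_{(\tilde\r,\s_{0})})+\varepsilon$, and letting $\varepsilon\to0$ finishes this half.

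For the reverse inequality, fix $\varepsilon>0$ and a near-optimal $\hat\r\in\O_{\tilde\r}$ with $\max_{t}\text{Area}(\hat\r(t))\le\W(\O_{\tilde\r})+\varepsilon$. First I would pass to a smooth sweepout: by a boundary-respecting mollification on $\S_{0}$ one obtains $\hat\r^{\eta}\in\O_{\tilde\r}$ that is continuous into $C^{\infty}(\S_{0},M)$, satisfies $\hat\r^{\eta}(\partial\S_{0})\subset N$, is homotopic to $\hat\r$ in $\O$, and has $\max_{t}\text{Area}(\hat\r^{\eta}(t))\le\max_{t}\text{Area}(\hat\r(t))+\varepsilon$. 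For each $t$ the pulled-back tensor $g_{t}^{\eta}:=(\hat\r^{\eta}(t))^{*}g$ is a smooth nonnegative symmetric $2$-tensor; put $g_{t}^{\eta,\delta}:=g_{t}^{\eta}+\delta g_{0}$, a smooth metric ($g_{0}$ a fixed background metric on $\S_{0}$). By the uniformization/Teichm\"uller theory for bordered surfaces of Section~\ref{bordered riemann surface}, the conformal class of $g_{t}^{\eta,\delta}$, normalized so that $\mathrm{id}\colon\S_{0}\to(\S_{0},[g_{t}^{\eta,\delta}])$ is the marking, is represented by a unique $\s(t)=\s_{\delta}(t)\in\T(\S_{0})$ together with a conformal diffeomorphism $h_{t}\colon\S_{\s(t)}\to(\S_{0},g_{t}^{\eta,\delta})$ homotopic to $f_{\s(t)}^{-1}$; this data depends continuously on $t$ (Ahlfors--Bers), and at $t=0,1$, where $g_{t}^{\eta}\equiv 0$, one has $\s(t)=\s_{0}$ and $h_{t}=\mathrm{id}$. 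Set $\p(t):=\hat\r^{\eta}(t)\circ h_{t}$ and $\s:=\s_{\delta}$. Then $(\p,\s)\in\tilde\O$ has constant endpoints, and its pullback $f_{\s(t)}^{*}\p(t)=\hat\r^{\eta}(t)\circ(h_{t}\circ f_{\s(t)})$ reparametrizes $\hat\r^{\eta}(t)$ by the self-diffeomorphism $h_{t}\circ f_{\s(t)}$ of $\S_{0}$, which by the marking normalization is homotopic to $\mathrm{id}_{\S_{0}}$ rel $\partial\S_{0}$ continuously in $t$; hence $f_{\s}^{*}\p\simeq\hat\r^{\eta}\simeq\tilde\r$ in $\O$ and $(\p,\s)\in\tilde\O_{(\tilde\r,\s_{0})}$.

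By conformal invariance of energy and invariance under $h_{t}$, $E(\p(t),\S_{\s(t)})=E(\hat\r^{\eta}(t),(\S_{0},g_{t}^{\eta,\delta}))$, which exceeds $\text{Area}(\hat\r^{\eta}(t))$ by exactly $\tfrac12\int_{\S_{0}}(\sqrt{a_{t}}-\sqrt{b_{t}})^{2}\,dV_{g_{t}^{\eta,\delta}}$, where $a_{t}\ge b_{t}\ge 0$ are the eigenvalues of $g_{t}^{\eta}$ relative to $g_{t}^{\eta,\delta}$. The heart of the argument, and the step I expect to be the main obstacle, is to show that this defect tends to $0$ as $\delta\to0$ \emph{uniformly in} $t$: pointwise, on the set where $g_{t}^{\eta}$ is nondegenerate the integrand is bounded and tends to $0$, while on the degenerate set $dV_{g_{t}^{\eta,\delta}}$ has $L^{1}$ norm $O(\sqrt{\delta}\,E(\hat\r^{\eta}(t))^{1/2})$; uniformity in $t$ is then extracted from compactness of $\{\hat\r^{\eta}(t):t\in[0,1]\}$ in $C^{1}(\S_{0},M)$, which bounds the energies uniformly and promotes these pointwise estimates to a uniform dominated-convergence statement. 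Granting this, for $\delta$ small enough $\max_{t}E(\p(t),\S_{\s(t)})\le\max_{t}\text{Area}(\hat\r^{\eta}(t))+\varepsilon\le\W(\O_{\tilde\r})+3\varepsilon$, hence $\W_{E}(\tilde\O_{(\tilde\r,\s_{0})})\le\W(\O_{\tilde\r})+3\varepsilon$; letting $\varepsilon\to0$ and combining with the first half gives the claimed equality. Besides the uniform defect estimate, the two points requiring care are the boundary-respecting mollification (say by reflection across $\partial\S_{0}$ in the double $\S_{0}^{d}$, composition with the nearest point projection $\pi_{M}$, and interpolation to $\pi_{N}$ of the boundary values in a thin collar, so that the smoothed slices keep their boundary on $N$, the smoothing remains a homotopy in $\O$, and the area increases by at most $\varepsilon$), and the verification that the near-conformal reparametrization preserves the homotopy class, which rests on the Teichm\"uller marking map $f_{\s(t)}$ being homotopic to the identity.
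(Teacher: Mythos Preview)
Your proposal is correct and follows essentially the same strategy as the paper: the easy inequality via $\text{Area}\le E$, then regularize the sweepout (the paper's Claim~\ref{regularization}), perturb the pulled-back metric by $\delta g_{0}$, uniformize (Claim~\ref{uniformization}), and control the energy--area defect as $\delta\to0$. The only minor difference is in the final estimate: the paper bounds $E(\p(t),\S_{\s(t)})$ above by $\text{Area}(\S_{0},g_{t}^{\eta,\delta})$ and expands the determinant to get the explicit rate $E-\text{Area}\le C\sqrt{\delta}$ with $C$ depending only on the (uniformly bounded) energy, which is a bit more direct than your dominated-convergence argument for obtaining the uniformity in $t$.
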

\begin{proof}
For a given sweepout $\tilde{\r}\in\Omega$ we always have that 
\[\text{Area}(\tilde{\r}(t))\leq E(\tilde{\r}(t),\S_0),\quad\forall t \in [0,1],\]
so we get $\W(\O_{\tilde{\r}})\leq\W_{E}(\tilde{\O}_{(\tilde{\r},\s_0)})$
It suffices to show that there exists a sequence of sweepouts $\{(\r_j,\s_j)\}_{j\in\N}\subset\tilde{\O}_{(\tilde{\r},\s_0)}$ (or equivalently, $\{f^*_{\s_j}\r_j\}_{j\in\N}\subset\Omega_{\tilde{\r}}$) such that
\begin{equation}\label{EA1}
\lim_{j\to\infty}\max_{t\in[0,1]}\text{Area}(\r_j(t))=\W(\O_{\tilde{\r}}),    
\end{equation}
and 
\begin{equation}\label{EA2}
\lim_{j\to\infty}\text{Area}(\r_j(t))-E(\r_j(t),\s_j(t))=0.    
\end{equation}
By definition $\W_{E}(\tilde{\O}_{(\tilde{\r},\s)})\leq \max_{t\in[0,1]}E(\r_j(t),\s_j(t))$, \eqref{EA1} and \eqref{EA2} imply that as $j\to\infty$ we have \[\W_E(\tilde{\O}_{(\tilde{\r},\s_0)})\leq\max_{t\in[0,1]}\text{Area}(\r_j(t))=\W(\O_{\tilde{\r}}).\]

Now we show the existence of the sequence of sweepouts which satisfies \eqref{EA1} and \eqref{EA2}.
\begin{claim}\label{regularization}
For a given sweepout $\tilde{\r}$, we can find a regularized sweepout in the same homotopy class $\r'\in\O_{\tilde{\r}}$, which lies in $C^0([0,1],C^2(\S_0,M))$ as a map of $t$, such that $\r'$ is close to $\tilde{\r}$ uniformly in the $W^{1,2}\cap C^0$-norm for all $t\in[0,1]$.
\end{claim}
\begin{proof}
It follows from a standard argument using mollification just like \cite[Lemma D.1]{CD}. We point out necessary modification of \cite[Lemma D.1]{CD} to make sure that the image of $\partial \S_0$ under each slice $\r'(\cdot,t)$ lie in the constraint submanifold $N$. In particular, near the boundary $\partial\S_0$ we can first enlarge the domain $\S_0$ in its double $\S^{d}_0$ by reflecting $\tilde{\r}(\cdot,t)$ across $N$ in the Fermi coordinates around $N$. The mollified maps when restricted to $\S_0$ will map $\partial\S_0$ to $N$ by our construction, and it's closed to $\tilde{\r}(\cdot,t)$ uniformly in $W^{1,2}\cap C^0$ near $\partial\S_0$. In the interior of $\S_0$, we can mollify $\tilde{\r}(\cdot,t)$ in $\R^n$. To combine them, we can choose a partition of unity to glue these two mollifications together. Finally, we can follow \cite[Lemma D.1]{CD} to project the mollified map back to $M$ by nearest point projection map to get the desired $\r'(\cdot,t)$. By choosing the mollification parameter small enough we can make sure that $\max_{t\in[0,1]}\|\tilde{\r}(\cdot,t)-\r'(\cdot,t)\|$ is as small as we want, where the norm is $W^{1,2}\cap C^0(\S_0,\R^n)$. Note that an explicit homotopy between $\r'$ and $\tilde{\r}$ is given by letting the mollification parameter goes to $0$. So we finish the proof of the claim. 
\end{proof}

\begin{claim}\label{uniformization}
If the regularized sweepout $\r'$ given in Claim \ref{regularization} induced non-degenerate metrics $\r'(t)^*g$ on $\S_0$,  $\forall t\in[0,1]$. Then there exists a continuous map $\s:[0,1]\to\T(\S_0)$ and orientation-preserving $C^{1,\frac{1}{2}}$ conformal diffeomorphism $h(t):\S_{\s(t)}\to (\S_0,\r'(t)^*g)$ such that the re-parametrized $\r(\cdot,t)=\r'(h(t),t)$ has the following properties:
\begin{enumerate}
    \item $(\r,\s)\in\tilde{\O}_{\tilde{\r},\s_0},$
    \item $\text{Area}(\r)=E(\r,\s).$
\end{enumerate}
\end{claim}
\begin{proof} 
\begin{description}
\item[1]The existence of conformal diffeomorphism $h$.

We denote the double of $\S_0$ by $\S^d_0$ and the covering map $\pi_0:\H\to\S^d_0=\H/\G_0$, where $\G_0$ is the Fuchsian group of $\S^d_0$. Fix some $t\in[0,1]$. The regularized $C^2$ sweepout $\r'(t):\S_0\to (M,g)$ induced a $C^1$ metric on $\S_0$, i.e., the pulled-back metric $\r'(t)^*g$. We extend the pulled-back metric $\r'(t)^*g$ on $\S_0^d$ by the mirror image of $\S_0$, i.e., having an isometric copy of $(\S_0,\r'(t)^*g)$ with the opposite orientation and identifying the boundary points. The mollification done in Claim \ref{regularization} near $\partial\S_0$ ensures that $\r'(t)^*g$ extends to a $C^2$ metric on $\S_0^d$, and we denote this $C^2$ metric by $g'(t)$. Pull $g'$ back to $\H$ by $\pi_0$ and denote it still by $g'$, then it's invariant under the $\G_0$ group action. In the complex coordinates $\{z,\bar{z}\}$, we can write
\begin{equation}\label{pull back metric}
g'(t)=\lambda(t)|dz+\mu(t)d\bar{z}|^2,\quad |\mu(t)|\leq k<1,    
\end{equation}
where $\lambda(t):\H\to\R$, $\lambda(t)>0$ and $\mu(t)$ is the Beltrami coefficient. The nondegenerate assumption of the metric ensures that $|\mu|$ is bounded away from $1$. Then we have a unique normalized quasi-conformal mapping:
\[f^{\mu}:(\H,g')\to(\H,g_0),\]
where $g_0$ is the standard Poincar\'{e} metric. Now push forward the Fuchsian group $\G_0$ under $f^\mu$. Since $f^\mu$ is a homeomorphism, we get another Fuchsian group 
\[\G_{\s}:=f^\mu\circ\G_0\circ (f^\mu)^{-1}.\]
$\G_{\s}$ gives a normalized Fuchsian model which represents an element in $\T(\S^d_0)$. By Definition \ref{admissible diff} and Definition \ref{teichmuller space} we know that each element in $\T(\S_0)$ corresponds to the restriction of an element in $\T(\S_0^d)$. Since we extend $g'$ on $\S^d_0$ by the mirror image of $\S_0$, the image of $\pi_0^{-1}(\partial\S_0)$ under $f^\mu$ is still $\pi_0^{-1}(\partial\S_0)$. This implies that the normalized Fuchsian model $\G_{\s}$ which represents an element in $\T(\S^{d}_0)$ corresponds to an element in $\T(\S_0)$. We denote this element by $\S_{\s}$, and its double $\S_{\s}^d=\H/\G_{\s}$. Let $\pi_{\s}:\H\to\S_{\s}^d$ be the quotient map. Then after taking quotient of $f^\mu$ by $\pi_0$ and $\pi_{\s}$, we get
\[f^\mu :(\S^d_0,g')\to(\S^d_{\s},g_0).\]
$f^\mu$ is conformal between $(\S^d_0,g')$ and $(\S^d_{\s},g_0)$. So $f^\mu$ is conformal between $(\S_0,g')$ and $(\S_{\s},g_0)$. Let \[h(t):(\S_{\s(t)},g_0)\to(\S_0,g'(t)),\quad h:=(f^\mu)^{-1},\]
then for each $t\in[0,1]$, $h(t)$ is a conformal homeomorphism between $(\S_{\s(t)},g_0)$ and $(\S_0,g'(t))$. The $C^{1,\frac{1}{2}}$-regularity follows from \cite[Corollary 3.3.1]{JJ}. For each $t\in[0,1]$, by \eqref{pull back metric} we have the corresponding $\mu(t)$ for $g'(t)$, and the corresponding induced element in $\T(\S_0)$ is  denoted by $S_{\s(t)}\in\T(S_0)$.
\item[2] Continuity of $\s:[0,1]\to\T(\S_0)$ and $\r=\r'(h(t),t)$ with respect to $t$. 

From Claim \ref{regularization} we know that the regularized $\r'(t)\in C^0([0,1],C^2(\S_0,M))$. So the induced pull-back metric $g'(t)$ is continuous with respect to $t$. This implies that the Beltrami coefficient $\mu(t)$ is also continuous w.r.t. $t$. By \cite{AB} and \cite{minimaltorus} we know that the quasi-conformal map $f^{\mu(t)}$ and $h(t)=(f^{\mu(t)})^{-1}$ are both continuous w.r.t. the parameter $t$, and the restriction of $f^{\mu(t)}$ on $\S_0$ is also continuous w.r.t. $t$.

Let us now show the continuity of the map $\s:[0,1]\to \T(\S_0)$ w.r.t. the parameter $t$. Recall that $\G_0$ is the Fushian group of the double $\S^d_0$ Now the corresponding normalized Fuchsian group $\G_{\s(t)}$ for the double $S^d_{\s(t)}$ is given by 
\[\G_{\s(t)}:=f^{\mu(t)}\circ\G_0\circ  h(t).\]
Let $\{\a_i^0,\b_i^0\}_{i=1}^{g}$ be the normalized generators for $\G_0$. So for a fixed $\a_i^0\in\G_0$, and let $\a_i^0(t)=f^{\mu(t)}\circ\a_i^0\circ  h(t)$ and $\b_i^0(t)=f^{\mu(t)}\circ\b_i^0\circ  h(t)$. Then $\a_i^0(t)$ and $\b_i^0(t)$ are both continuous w.r.t. $t$, which means that the coefficient of the linear fractional transformations corresponding to   $\{\a_i^0(t),\b_i^0(t)\}_{i=1}^{g}$ are continuous functions of $t$. Since $\{\a_i^0(t),\b_i^0(t)\}_{i=1}^{g}$ form the normalized generators for $\G_{\s(t)}$. Now using the topology of Fricke space we know that the corresponding elements $\S_{\s(t)}\in\T(\S_0)$ are continuous w.r.t. $t$.
The continuity of $\r(t)=\r'(h(t),t)$ follows from the continuity of $h$ w.r.t. $t$.

\item[3] $(\r,\s)\in\tilde{\O}_{\tilde{\r},\s_0}$
The only thing left to check is that $(\r,\s)$ is homotopic to $(\tilde{\r},\s_0)$. Since for each fixed $t$ we have
\[\r(t):\S_{\s(t)}\to M.\]
For each $\S_{\s(t)}$, there exists a unique quasi-conformal map $f_{\s(t)}:\S_0\to\S_{\s(t)}$ which is the restriction of the quasi-conformal map of its double $f_{\s(t)}:\S^d_0\to\S^d_{\s(t)}$. The normalized Fuchsian group of  $\S^d_{\s(t)}$ is $\G_{\s(t)}:=f_{\s(t)}\circ\G_0\circ (f_{\s(t)})^{-1}$, which coincides with  $\G_{\s(t)}:=f^{\mu(t)}\circ\G_0\circ (f^{\mu(t)})^{-1}$. Thus implies that $f_{\s(t)}$ is homotopic to $f^{\mu(t)}$, and $h(t)$ is homotopic to $(f_{\s(t)})^{-1}$. Therefore we have
\[f^*_{\s(t)}\r(t)=\r(t)\circ f_{\s(t)}=\r'(h(f_{\s(t)}),t).\]
So $\r'(h(f_{\s(t)}),t)$ is homotopic to $\tilde{\r}(t).$
\end{description}
\end{proof}
Let $\{(\tilde{\r}_j,\s_0)\}_{j\in\N}\subset\tilde{\O}_{(\tilde{\r},\s_0)}$ be a sequence of sweepouts such that
\begin{equation}
\lim_{j\to\infty}\max_{t\in[0,1]}\text{Area}(\tilde{\r}_j(t))=\W(\O_{\tilde{\r}}).
\end{equation}
by Claim \ref{regularization} there exists the regularized sequence of sweepouts $\{({\r}'_j,\s_0)\}_{j\in\N}\subset\tilde{\O}_{(\tilde{\r},\s_0)}$ such that $\{(\r'_j,\s_0)\}_{j\in\N}\subset C^0([0,1],C^2(\S_0,M))$ and \[\lim_{j\to\infty}\max_{t\in[0,1]}\text{Area}(\r'_j(t))=\W(\O_{\tilde{\r}}).\]
We consider the pulled-back metric $\tilde{g}_j(t)=\r_j'(t)^*g$, which extends to a $C^2$ metric on the double $\S_0^d$, and it's continuous w.r.t. $t$. Since $\tilde{g}_j(t)$ may be degenerate, let
\begin{equation}\label{pullback metric}
g_j(t):=\tilde{g}_j(t)+\delta_j g_0,    
\end{equation}
where $g_0$ is the standard Poincar\'{e} metric on $\S_0^d$ restricted on $\S_0$, and $\{\delta_j\}$ is a sequence such that $\delta_j\to 0$ as $j\to\infty$. Then by Claim \ref{uniformization}, $g_j(t)$ uniquely determined $\s_j(t)\subset\T(\S_0)$ and conformal diffeomorphism \[h_j(t):(\S_{\s_j(t)},g_0)\to(\S_0,g_j(t)).\]
Now we define the sequence of sweepouts $\{\r_j\}_{j\in\N}$, $\r_j:(\S_{\s_j(t)},g_0)\times[0,1]\to(M,g)$ to be the following
\[\r_j(t)=\r_j'(h_j(t),t).\]
$\r_j$ is homotopic to $\tilde{\r}_j$ by Claim \ref{regularization} and Claim \ref{uniformization}. Since reparametrization doesn't change area so we have 
\[\lim_{j\to\infty}\max_{t\in[0,1]}\text{Area}(\r_j(t))=\W(\O_{\tilde{\r}}).\]
We are left to check that $\{\r_j\}_{j\in\N}$ satisfies 
\begin{equation}
\lim_{j\to\infty}\text{Area}(\r_j(t))-E(\r_j(t),\s_j(t))=0.    
\end{equation}
The following estimate is similar to \cite[Appendix D]{CD} and \cite[Theorem 3.1]{minimaltorus},
\begin{align*}
    E(\r_j(t),\s_j(t))&=E(h_j:\S_{\s(t)}\to(\S_0,\tilde{g}_j(t)))\\
    &\leq E(h_j:\S_{\s(t)}\to(\S_0,g_j(t)))\\
    &=\text{Area}(h_j:\S_{\s(t)}\to(\S_0,g_j(t)))\\
    &=\text{Area}(\S_0,g_j(t))=\int_{\S_0}[\text{det}(g_j(t))]^{1/2}d\text{vol}_0\\
    &\leq\int_{\S_0}[\text{det}(\tilde{g}_j(t))+\delta_j\text{Tr}_{g_0}\tilde{g}_j(t)+C(\tilde{g}_j(t))\delta_j^2]^{1/2}d\text{vol}_0\\
    &\leq\text{Area}(\r_j(t))+C_1(\tilde{g}_j(t))\sqrt{\delta_j}.
\end{align*}
Since $\delta_j\to 0$. We have 
\[\lim_{j\to\infty}\text{Area}(\r_j(t))-E(\r_j(t),\s_j(t))=0.\]
\end{proof}
\begin{corollary}\label{almost harmonic}
Given a sweepout $\tilde{\r}\in\Omega$, there exists a minimizing sequence of reparameterized sweepouts  $\{(\r_j,\s_j)\}_{j\in\N}\subset\tilde{\Omega}_{(\tilde{\r},\s_0)}$ such that
\begin{equation}\label{almost conformal}
\lim_{j\to\infty}\text{Area}(\r_j(t))-E(\r_j(t),\s_j(t))=0.
\end{equation}
\end{corollary}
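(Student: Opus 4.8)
The plan is to observe that the corollary is essentially a repackaging of the construction carried out in the proof of Theorem \ref{width identity}; the only thing to add is a diagonal choice of the regularizing parameters guaranteeing that the area--energy defect tends to zero uniformly in $t$.

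First I would fix a minimizing sequence $\{\tilde{\r}_j\}\subset\O_{\tilde{\r}}$ for $\W(\O_{\tilde{\r}})$, which exists because the width is an infimum, and for each $j$ apply Claim \ref{regularization} to produce a regularized sweepout $\r'_j\in C^0([0,1],C^2(\S_0,M))$ in the homotopy class $\O_{\tilde{\r}}$ that is $W^{1,2}\cap C^0$-close to $\tilde{\r}_j$. Since the area functional is continuous under $W^{1,2}$-convergence (with the $C^0$ control in place), the $\r'_j$ can be chosen so that $\max_{t\in[0,1]}\text{Area}(\r'_j(t))\to\W(\O_{\tilde{\r}})$ as well.

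Next I would form the pulled-back metrics $\tilde g_j(t)=\r'_j(t)^*g$, which by the reflection step of Claim \ref{regularization} extend to $C^2$ metrics on $\S_0^d$ depending continuously on $t$, and regularize them to the non-degenerate metrics $g_j(t)=\tilde g_j(t)+\delta_j g_0$. Applying Claim \ref{uniformization} to $g_j(t)$ produces a continuous path $\s_j:[0,1]\to\T(\S_0)$ and conformal diffeomorphisms $h_j(t):(\S_{\s_j(t)},g_0)\to(\S_0,g_j(t))$, and I set $\r_j(t)=\r'_j(h_j(t),t)$. Since $h_j(t)$ carries $\partial\S_{\s_j(t)}$ onto $\partial\S_0$ and $h_j$, $\s_j$ depend continuously on $t$, $\r_j$ is a genuine element of $\tilde{\O}$ obeying the boundary and endpoint constraints; by the homotopy argument in part $3$ of Claim \ref{uniformization}, together with the contractibility of $\T(\S_0)$ (it is homeomorphic to the ball $\Q^1_{\S_0}$), one gets $(\r_j,\s_j)\in\tilde{\O}_{(\tilde{\r},\s_0)}$; and since reparametrization does not change area, $\max_{t\in[0,1]}\text{Area}(\r_j(t))\to\W(\O_{\tilde{\r}})$, so $\{(\r_j,\s_j)\}$ is a minimizing sequence (by Theorem \ref{width identity} it is then also a minimizing sequence for $\W_E(\tilde{\O}_{(\tilde{\r},\s_0)})$).

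Finally, \eqref{almost conformal} is exactly the estimate at the end of the proof of Theorem \ref{width identity}: using the conformal invariance of the two-dimensional Dirichlet energy, that $h_j(t)$ is conformal, the pointwise inequality $\tilde g_j\le g_j$, and the expansion $\det(\tilde g_j+\delta_j g_0)=\det\tilde g_j+\delta_j\,\text{Tr}_{g_0}\tilde g_j+O(\delta_j^2)$, one obtains $E(\r_j(t),\s_j(t))\le\text{Area}(\r_j(t))+C_1(\tilde g_j(t))\sqrt{\delta_j}$, while $\text{Area}(\r_j(t))\le E(\r_j(t),\s_j(t))$ holds trivially. The step that needs care --- and the reason it is worth isolating this statement --- is the choice of $\delta_j$: the constant $C_1(\tilde g_j(t))$ depends on the $C^2$-norm of $\r'_j(t)$, so I would pick $\delta_j$ \emph{after} $\r'_j$ (a diagonal argument), small enough that $\sup_{t\in[0,1]}C_1(\tilde g_j(t))\sqrt{\delta_j}\le 1/\sqrt{j}$, which is possible since $\r'_j\in C^0([0,1],C^2)$. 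With this choice $|\text{Area}(\r_j(t))-E(\r_j(t),\s_j(t))|\le 1/\sqrt{j}$ for every $t$, which is \eqref{almost conformal} (indeed it holds uniformly in $t$). I expect this bookkeeping to be the only genuine obstacle; everything else is a direct appeal to Claims \ref{regularization} and \ref{uniformization} and to the computation inside the proof of Theorem \ref{width identity}.
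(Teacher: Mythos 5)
Your proposal is correct and takes essentially the same route as the paper: Corollary \ref{almost harmonic} is in effect a restatement of the construction carried out in the proof of Theorem \ref{width identity}, and you reproduce it step by step (regularize via Claim \ref{regularization}, perturb the pulled-back metric, apply Claim \ref{uniformization}, reparametrize, estimate the area--energy defect). Your explicit diagonal choice of $\delta_j$ after fixing $\r'_j$, ensuring $\sup_{t\in[0,1]}C_1(\tilde{g}_j(t))\sqrt{\delta_j}\to 0$, spells out a $j$-dependence that the paper glosses over when it simply writes that $\delta_j\to 0$ suffices.
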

\section{Replacement procedure and Energy decreasing map}\label{replacement}
In this section, we aim at building a systematic replacement procedure in order to obtain a sequence which satisfies a kind of Palais-Smale assumption. Most of the results presented in the section is similar to \cite[section 3.3]{CD} under different assumption, similar results also can be found in \cite{minimaltorus}\cite{minmaxgenus}\cite{PR}\cite{LAX}. We include the proof and the construction here for the completeness of the paper.

Given a sweepout $\tilde{\r}$, let $\{(\r_j,\s_j)\}_{j\in\N}$ be a minimizing sequence which satisfies \eqref{almost conformal} obtained by Corollary \ref{almost harmonic}. We have
\[\r_j(t):(\S_{\s_j(t)},g_0)\to(M,g),\]
where $g_0$ is the standard Poincar\'{e} metric on $\S_{\s_j(t)}$. Recall that for each $\s_j(t)$ there exists a normalized Fuchsian group $\G_{\s_j(t)}$ for the double $\S^d_{\s_j(t)}$, and the corresponding quotient map \[\pi_{\s_j(t)}:\H\to\H/\G_{\s_j(t)}=\S^d_{\s_j(t)}.\]
We can also view $\r_j(t)$ as being lifted up to a proper subset of $\H$ by $\pi^{-1}_{\s_j(t)}(\S_{\s_j(t)})$. Now we claim that although the quotient map $\pi_{\s_j(t)}$ is different for each $j$ and $t$, we can lift any minimizing sequence $\{(\r_j,\s_j)\}_{j\in\N}$ obtained by Corollary \ref{almost harmonic} up by the different quotient map $\pi_{\s_j(t)}$ to the same subset in $\H$. 
\begin{claim}\label{invariant of boundary}
$\pi^{-1}_{\s_j(t)}(\partial\S_{\s_j(t)})$ is fixed for all $t\in[0,1]$, and all $j\in\N$. 
\end{claim}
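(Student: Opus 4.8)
The plan is to exploit the normalization that fixes the Fuchsian group of the double, and to trace through how the boundary curves of $\S_0$ sit inside $\H$. Recall from Section~\ref{bordered riemann surface} that for a bordered surface $\S$ of type $(g,m)$, each ideal boundary component $\z$ corresponds to a hyperbolic element $\b\in\G$ (the deck transformation of $\pi:\H\to\S^d$ coming from a simple closed curve around $\z$), and that by Claim~\ref{border geodesic} the border component $\partial\S$ is the closed geodesic in its free homotopy class on $\S^d$. The lift $\pi^{-1}(\partial\S)$ is therefore the $\G$-orbit of a collection of complete geodesics in $\H$, namely the axes of the conjugates of the border elements $\b_1,\dots,\b_m$. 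The first step is to identify these border elements among the normalized generators: by the normalization described in Section~2 the generators $\{\a_i^0,\b_i^0\}_{i=1}^g$ of $\G_{\s_j(t)}$ (equivalently, the $m$ border elements obtained from them by the standard relation) are \emph{the same linear fractional transformations} for every $j$ and $t$, because $\s_j(t)$ only records a conformal deformation of the interior while the normalization pins down the generators of the double. Hence the axes in $\H$ of the border elements, and their $\G_{\s_j(t)}$-orbits, do not depend on $j$ or $t$.

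More carefully: in the proof of Claim~\ref{uniformization} the Fuchsian group of the double was built as $\G_{\s_j(t)} = f^{\mu_j(t)}\circ\G_0\circ (f^{\mu_j(t)})^{-1}$, where $f^{\mu_j(t)}:\H\to\H$ is the normalized quasi-conformal map solving the Beltrami equation for the metric $g_j(t)$. Because $g_j(t)$ was obtained by extending a metric on $\S_0$ to $\S_0^d$ via the mirror image across the border (the mollification in Claim~\ref{regularization} guarantees this is $C^2$), the Beltrami coefficient $\mu_j(t)$ is \emph{symmetric under the anticonformal involution} $\sigma$ fixing $\pi_0^{-1}(\partial\S_0)$ pointwise. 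A symmetric Beltrami coefficient has a normalized solution $f^{\mu_j(t)}$ that commutes with $\sigma$, hence maps the fixed-point set $\pi_0^{-1}(\partial\S_0)$ of $\sigma$ into the fixed-point set of the involution on the target — and with the normalization forcing three boundary points to be fixed, that target involution is again $\sigma$. Therefore $f^{\mu_j(t)}$ fixes $\pi_0^{-1}(\partial\S_0)$ setwise. Since $h_j(t) = (f^{\mu_j(t)})^{-1}$ and $\pi_{\s_j(t)}^{-1}(\partial\S_{\s_j(t)}) = f^{\mu_j(t)}\big(\pi_0^{-1}(\partial\S_0)\big)$, we conclude $\pi_{\s_j(t)}^{-1}(\partial\S_{\s_j(t)}) = \pi_0^{-1}(\partial\S_0)$ for all $j$ and all $t$, which is the assertion.

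The steps, in order, are: (1) recall the description of $\pi^{-1}(\partial\S)$ as the orbit of the axes of the border elements under the Fuchsian group of the double; (2) observe that the mirror-symmetric extension of the metric makes each $\mu_j(t)$ invariant under the fixed anticonformal involution $\sigma$; (3) invoke the fact (reflection principle / uniqueness in the measurable Riemann mapping theorem) that the normalized solution of a $\sigma$-symmetric Beltrami equation commutes with $\sigma$, hence preserves $\mathrm{Fix}(\sigma) = \pi_0^{-1}(\partial\S_0)$ setwise; (4) push this through the quotients to get $\pi_{\s_j(t)}^{-1}(\partial\S_{\s_j(t)}) = \pi_0^{-1}(\partial\S_0)$.

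The main obstacle is Step~(3) — carefully justifying that the \emph{normalized} quasi-conformal self-map of $\H$ associated to a symmetric Beltrami coefficient genuinely commutes with $\sigma$ (rather than merely with some involution), which requires that the three-point normalization be chosen compatibly with $\sigma$, e.g. fixing points of $\hat{\R}\cap\overline{\H}$ or a $\sigma$-invariant triple on $\pi_0^{-1}(\partial\S_0)$. Once the normalization is set up $\sigma$-equivariantly, uniqueness in the measurable Riemann mapping theorem gives $f^{\mu}\circ\sigma = \sigma\circ f^{\mu}$ immediately, since both sides solve the same Beltrami equation with the same normalization; the rest is bookkeeping with the covering maps. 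A secondary point to check is that the border elements really are among (or generated by) the normalized generators in a $t$-independent way, but this is exactly the content of the normalization recalled in Section~2 together with the standard relation $\prod[\a_i,\b_i]\prod\b_{g+k} = 1$ for a surface with boundary.
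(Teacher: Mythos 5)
Your ``more carefully'' paragraph is essentially the paper's own proof, spelled out with more precision: the paper simply observes that $g_j(t)$ was built as the mirror-image extension across $\partial\S_0$, so $\mu_j(t)$ is $\sigma$-symmetric and $f^{\mu_j(t)}$ preserves $\pi_0^{-1}(\partial\S_0)$ setwise. Your appeal to uniqueness in the measurable Riemann mapping theorem to conclude $f^{\mu}\circ\sigma = \sigma\circ f^{\mu}$, and your flag that the three-point normalization must be chosen $\sigma$-equivariantly, make rigorous exactly the step the paper asserts without justification; this is a welcome improvement.

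Be careful, however, to discard the first paragraph's claim that the normalized generators of $\G_{\s_j(t)}$ ``are the same linear fractional transformations for every $j$ and $t$.'' This is false. The normalized generators are $\a_i^0(t)=f^{\mu_j(t)}\circ\a_i^0\circ h_j(t)$ and $\b_i^0(t)=f^{\mu_j(t)}\circ\b_i^0\circ h_j(t)$, and they do depend on $t$ --- indeed their continuous variation is exactly what drives the path $\s_j(\cdot)$ through $\T(\S_0)$ in the proof of Claim~\ref{uniformization}, step 2 (the Fricke coordinates are the entries of these very transformations). The Fricke normalization uniquely determines generators \emph{given} a marked surface, but the marked surface varies with $t$, so the generators vary. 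What is invariant is only the \emph{set} $\pi_0^{-1}(\partial\S_0)$, and the reason is precisely the $\sigma$-equivariance argument of your second paragraph, not any fixing of group elements. So the proposal is correct provided you strike the first paragraph's false claim and rest the argument entirely on the reflection-principle step.
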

\begin{proof}
Since the quotient map $\pi_{\s_j(t)}:\H\to\H/\G_{\s_j(t)}=\S^d_{\s_j(t)}$ is defined w.r.t. the Fuchsian group $\G_{\s_j(t)}$. We recall from Claim \ref{uniformization} that the normalized Fuchsian group $\G_{\s_j(t)}$ is obtained from the quasi-conforaml map $f^{\mu_j(t)}:\S^d_0\to\S^d_{\s_j(t)}$, where $\mu_j(t)$ is the Beltrami coefficient of the non-degenerate $C^2$ pulled back metric defined on $\S_0^d$ (see \eqref{pull back metric}). Since the construction of double surface is done by mirror image and identifying boundaries (see \ref{bordered riemann surface}). We know that $f^{\mu_j(t)}(\partial\S_0)$ is fixed for all $j$ and $t$. Thus the corresponding quotient map $\pi^{-1}_{\s_j(t)}(\partial\S_{\s_j(t)})$ is fixed for all $t\in[0,1]$, and all $j\in\N$.
\end{proof}

Since $\pi^{-1}_{\s_j(t)}(\partial\S_{\s_j(t)})$ is fixed for all $t\in[0,1]$, and all $j\in\N$. We can fix a simply connected component  $\H'$ of $\H\setminus\pi^{-1}_{\s(t)}(\partial\S_{\s(t)})$ and view $\r_j(t)$ as being lifted up to $\H'$ by $\pi^{-1}_{\s(t)}(\S_{\s_j(t)})$. We denote the lifted mappings again by $\r_j(t)$ in abuse of notation. Moreover, the fundamental group  $\pi_1(\S_{\s_j(t)})$ injects into $\G_{\s_j(t)}$ and we denote its image by $\G'_{\s_j(t)}$. $\G'_{\s_j(t)}$ stabilizes $\H'$. Then $\r_j(t)$ can be viewed as defined on the same domain $\H'$, i.e., $\r_j(t):\H'\to M$, but invariant under different Fuchsian groups action, i.e.,
\[\forall\a\in\G'_{\s_j(t)},\quad\r_j(t)\circ\a=\r_j(t).\]

Now we introduce the notion of \textit{collections of disjoint balls} on $\S_{\s(t)}$. Here we use $\B=\cup_{i=1}^nB_i$ to denote a finite collection of disjoint geodesic balls on $\S_{\s(t)}$, those geodesics balls can either lie in the interior of $\S_{\s(t)}$ or intersect with $\partial\S_{\s(t)}$ orthogonally, with the radii of each ball less than the injective radius of the center of that ball on $\S^d_{\s(t)}$. If $B$ is an interior ball then the center of $B$ lies in $\S_{\s(t)}$. If $B$ is a ball that intersects the boundary $\partial\S_{\s(t)}$ orthogonally, then the center of it lies on $\partial\S_{\s(t)}$, and we take the injective radius of its center as on $\S_{\s(t)}^d$. Taking a ball $B\in\B$ with radius $r_B$ , we will use a sub-geodesic ball with the same center but with radius $\rho r_B$, which we denote by $\rho B$.

For an interior geodesic ball $B$ with hyperbolic metric of curvature $-1$ can always be pulled back to the Poincar\'{e} disk $(D,ds^2_{-1}=\frac{|dx^2|}{(1-|x|^2)^2})$, such that the center of $B$ goes to the center of $D$. Then $B$ can be viewed as a disk $B(0,r_B^0)$ \footnote{$B(0,r^0)$ is denoted to be a disk center at $0$ with Euclidean radius $r_0$} in $D$ with hyperbolic metric $ds^2_{-1}$, where $r_B^0$ is the Euclidean radius of the image of $B$ and the radius w.r.t. hyperbolic metric is $r_B=\int_0^{r_B^0}\frac{1}{1-t^2}dt=\tanh^{-1}(r_B^0).$
The hyperbolic metric is conformal and uniformly equivalent to the Euclidean metric $ds_0^2=|dx|^2$ on $B$. Here \textit{uniformly equivalent} means $ds_0^2\leq ds^2_{-1}\leq Cds_0^2$ for some constant $C>1$. There exists a small number:
\begin{equation}\label{rho}
r_0:=\tanh^{-1}(\frac{1}{2}),
\end{equation}
such that if we restrict the radius $r_B$ of $B$ with $r_B\leq r_0$, we can choose the constant $C=\frac{16}{9}$. Then if we consider $\frac{1}{4}B$, under the Euclidean metric $ds_0^2$, the radius of $\frac{1}{4}B$ is less than $\frac{1}{2}r_B^0$, i.e., $\frac{1}{4}B\subset B(0,\frac{1}{2}r_B^0)$. Later on, we will always assume that the geodesic balls have their radii bounded from above by $r_0$.

Similarly, for a geodesic ball $B$ of $\S_{\s_j(t)}$ which intersects the boundary $\partial\S_{\s_j(t)}$ orthogonally. Since $\pi^{-1}_{\s_j(t)}(\partial\S_{\s_j(t)})$ is fixed for all $t\in[0,1]$, and all $j\in\N$, and by Claim \ref{border geodesic} we know that $\partial\S_{\s_j(t)}$ is a geodesic in $S^d_{\s_j(t)}$. So we can assume one of the component of $\partial\S_{\s_j(t)}$ can be pulled back to the imaginary axis on $\H$ by the quotient map $\pi_{\s_j(t)}.$ Thus we can again pull back the geodesic ball to the Poincar\'{e} disk, such that the center of $B$ goes to the center of $D$ and the boundary component $\partial\S_{\s_j(t)}$ on which $B$ intersects orthogonally with can be pulled back to the real axis on the Poincar\'{e} disk. So that $B$ corresponds to the intersection of the geodesic ball centered at the center of Poincar\'{e} disk with upper half plane, i.e., $D^{+}=D\cap\{(x_1,x_2)|x_2>0\}$. We also assume that the geodesic balls have their radii bounded above by $r_0.$

\subsection{Continuity of the harmonic replacement map}
\begin{theorem}\cite[Theorem 3.1]{CD}\label{energy convexity}(Energy convexity for weakly harmonic map)
There exists a constant $\e_1>0$ (depending only on the manifold $M$) such that for all maps $u,v\in W^{1 ,2}(B,M)$, if $E(u)\leq\e_1$ and $v$ is weakly harmonic with the same boundary value as $u$, then we have:
\begin{equation}\label{convexity1}
\int_B|\n u|^2-\int_B|\n v|^2\geq\frac{1}{2}\int_B|\n u-\n v|^2.
\end{equation}
\end{theorem}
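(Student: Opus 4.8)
The plan is to follow Colding--Minicozzi: expand the energy difference, use the weakly harmonic map equation for $v$ to rewrite the cross term as a curvature integral, use the geometry of $M$ to gain a quadratic power in $u-v$, and then absorb the resulting error using the smallness of the energy. Set $w=u-v$. Since $u$ and $v$ agree on $\partial B$ and take values in the compact submanifold $M\subset\R^N$, we have $w\in W^{1,2}_0(B,\R^N)\cap L^\infty$. Expanding $\int_B|\nabla u|^2=\int_B|\nabla v+\nabla w|^2$ gives
\[
\int_B|\nabla u|^2-\int_B|\nabla v|^2-\tfrac12\int_B|\nabla w|^2 \;=\; 2\int_B\nabla v\cdot\nabla w+\tfrac12\int_B|\nabla w|^2,
\]
so it suffices to prove the one-sided bound $\int_B\nabla v\cdot\nabla w\ge-\tfrac14\int_B|\nabla w|^2$. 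We also record that since $v$ is the (energy-minimizing) harmonic replacement of $u$ --- the only situation used in the sequel --- one has $E(v)\le E(u)\le\varepsilon_1$, so both $u$ and $v$ have small energy.

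For the cross term, I would use that $v$ is weakly harmonic, i.e.\ $\Delta v=-A(v)(\nabla v,\nabla v)$ in $\mathcal D'(B)$ with $A(v)(\nabla v,\nabla v)\in L^1$, where $A$ denotes the second fundamental form of $M\subset\R^N$. Testing this identity against $w$ --- legitimate after approximating $w$ by maps in $C_c^\infty(B,\R^N)$ with a uniform $L^\infty$ bound and passing to the limit by dominated convergence against the $L^1$ right-hand side --- yields $\int_B\nabla v\cdot\nabla w=\int_B A(v)(\nabla v,\nabla v)\cdot w$. Because $A(v)(\nabla v,\nabla v)$ is normal to $T_{v(x)}M$, only the component $w^\perp$ of $w$ normal to $T_{v(x)}M$ contributes; and since $u(x),v(x)\in M$ with $M$ of class $C^2$, the chord $w(x)=u(x)-v(x)$ differs from its tangential projection by a second-order term, so $|w^\perp(x)|\le C_M|w(x)|^2$, while $|A(v)(\nabla v,\nabla v)|\le C_M|\nabla v|^2$. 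Hence
\[
\Bigl|\int_B\nabla v\cdot\nabla w\Bigr| \;\le\; C_M^2\int_B|\nabla v|^2\,|w|^2 .
\]

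The main obstacle is to absorb this error into $\tfrac14\int_B|\nabla w|^2$. In dimension two $W^{1,2}\not\hookrightarrow L^\infty$, so one cannot simply pull $|\nabla v|^2$ out in $L^\infty$ and invoke Poincar\'e, and for a general $v$ the bound $\int_B|\nabla v|^2|w|^2\lesssim\bigl(\int_B|\nabla v|^2\bigr)\int_B|\nabla w|^2$ is false (concentrate $|\nabla v|^2$ near a point). This is exactly where harmonicity and small energy enter: by the $\varepsilon$-regularity theorem for weakly harmonic maps, $E(v)\le\varepsilon_1$ forces a Morrey-type decay $\int_{B_r(x_0)}|\nabla v|^2\le C\,(r/r_0)^{2\alpha}\int_B|\nabla v|^2$ with some $\alpha>0$, so that $|\nabla v|^2\,dx$ lies in a Morrey space with norm $\le C\varepsilon_1$; together with the two-dimensional estimate $\|w\|_{L^q(B)}\le C\sqrt q\,\|\nabla w\|_{L^2(B)}$ for $w\in W^{1,2}_0$ (Poincar\'e plus Moser--Trudinger), a Fefferman--Phong-type trace inequality gives $\int_B|\nabla v|^2|w|^2\le C\varepsilon_1\int_B|\nabla w|^2$. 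Choosing $\varepsilon_1=\varepsilon_1(M)$ so small that $C_M^2\,C\,\varepsilon_1\le\tfrac14$ then closes the estimate. The delicate points are precisely this borderline absorption and making the decay available (in a suitable boundary version) for balls centred near $\partial B$ as well; everything else is bookkeeping. An equivalent formulation, which I would mention, is that the inequality expresses the uniform convexity of $E$ along the path $s\mapsto\Pi\bigl((1-s)v+su\bigr)$, with $\Pi$ the nearest-point projection onto $M$, using $\frac{d}{ds}E|_{s=0}=0$ from weak harmonicity; the analytic content is the same.
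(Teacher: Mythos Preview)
The paper does not supply its own proof of this theorem; it is quoted verbatim from \cite[Theorem 3.1]{CD} and used as a black box. So there is no in-paper argument to compare against, and your sketch is a faithful reconstruction of the Colding--Minicozzi proof: expand, use $\Delta v=-A(v)(\nabla v,\nabla v)$ on the cross term, exploit normality of $A$ to gain $|w|^2$, and absorb by smallness.

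One remark on the absorption step. The route actually taken in \cite{CD} is more concrete than the Morrey/Fefferman--Phong argument you outline: from small-energy $\varepsilon$-regularity one has the pointwise bound $|\nabla v(x)|\le C\sqrt{\varepsilon_1}\,(1-|x|)^{-1}$ on $B$ (the interior analogue of the paper's Theorem~\ref{regularity}), and then the boundary Hardy inequality $\int_B |w|^2(1-|x|)^{-2}\le C\int_B|\nabla w|^2$ for $w\in W^{1,2}_0(B)$ gives $\int_B|\nabla v|^2|w|^2\le C\varepsilon_1\int_B|\nabla w|^2$ directly. This avoids the borderline trace-inequality machinery and makes the dependence of $\varepsilon_1$ on $M$ explicit. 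Your approach is correct in spirit but heavier than necessary; the Hardy-inequality route is both simpler and exactly what is mirrored in the free-boundary version (Theorem~\ref{energy convexity free boundary}) that the paper invokes next. Also note a small discrepancy: the hypothesis needed for the gradient estimate is $E(v)\le\varepsilon_1$, not $E(u)\le\varepsilon_1$; you patch this correctly by observing that in every application $v$ is the energy-minimizing replacement, so $E(v)\le E(u)$.
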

Recall that for a geodesic ball $B$ which interests the boundary $\partial\S_{\s_j(t)}$ orthogonally, it can be pulled back to the upper half disk on the Poincar\'{e} disk. We fix some notations here.
\begin{itemize}
    \item $D_s$ denotes the disk centered at origin with radius $s$.
    \item $D^+_s$ denotes the upper half disk with radius $s$ centered at the origin.
    \item $\l^A_s=\{(r,\theta):r\equiv s,\:0\leq\theta\leq\pi\}.$
    \item $\l^C_s=\{(r,\theta),0\leq r\leq s,\:\theta=0\text{ or }\pi\}.$
\end{itemize}
\begin{theorem}\cite[Theorem 2.5]{LAX}\label{regularity}
There exists a constant $\e_1>0$ depending only on $M$ and $N$ such that if $v\in W^{1,2}(D^+,M)$ is a weakly harmonic map with its energy bounded by $\e_1$, and with mixed Dirichlet boundary on $\l^A$ and free boundary $v|_{\l^C}\subset N$, then for any $x\in D^+\cup(\l^C)^0$ we have
\begin{equation}\label{epsilonregularity}
    |\nabla v|(x)\leq C\frac{\sqrt{\e_1}}{1-|x|},
\end{equation}
for some constant $C>0$ that only depends on $M$ and $N$.
\end{theorem}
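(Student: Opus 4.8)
The plan is to deduce the pointwise bound from two \emph{small-energy regularity} statements — one interior and one centered on the free boundary arc $\l^C$ — together with a rescaling argument that produces the weight $1/(1-|x|)$. Fix $x\in D^+\cup(\l^C)^0$ and write $d=1-|x|=\mathrm{dist}(x,\l^A)$. Since $B_{d/2}(x)$ never reaches $\l^A$ and never reaches the two corner points (which lie on $\l^A$), the set $B_{d/2}(x)\cap\overline{D^+}$ is, up to shrinking the radius by a fixed factor, either a disk in the interior of $D^+$ or a Euclidean half-disk meeting $\l^C$ transversally away from the corners. In either case it carries energy at most $E(v)\le\e_1$, and in two dimensions the Dirichlet energy is conformally (in particular scale-) invariant, so the rescaled map $\tilde v(y)=v(x+dy)$ on the unit (half-)disk is again weakly harmonic, still obeys the free boundary condition $\tilde v|_{\l^C}\subset N$ on the relevant part of its boundary, and has energy at most $\e_1$. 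If one can show, after choosing $\e_1$ small depending only on $M$ and $N$, that $\sup|\nabla\tilde v|\le C\sqrt{\e_1}$ on the half-size (half-)disk with $C=C(M,N)$, then scaling back gives $|\nabla v|(x)\le C\sqrt{\e_1}/d$, which is the assertion.

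The interior statement is classical: a weakly harmonic map $w:B_1\to M\hookrightarrow\R^N$ with $E(w)\le\e_1$ satisfies a Morrey decay $\int_{B_r}|\nabla w|^2\le Cr^{2\alpha}$ — by the moving-frame argument of H\'elein, or equivalently by iterating the energy-convexity inequality of Theorem \ref{energy convexity} on dyadic annuli via hole-filling — hence is $C^{1,\alpha}$ with $\sup_{B_{1/2}}|\nabla w|\le C(M)\sqrt{\e_1}$. The task is the analogous statement at the free boundary: for $w:D^+\to M$ weakly harmonic, $w(\l^C)\subset N$ in the weak (conormal) sense, $E(w)\le\e_1$, show $\sup_{B^+_{1/2}}|\nabla w|\le C(M,N)\sqrt{\e_1}$. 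Here I would (i) pass to Fermi coordinates $(y,z)$ on a uniform tubular neighborhood of $N$ in $M$, in which the free boundary condition becomes $z\circ w=0$ and $\partial_\nu(y\circ w)=0$ on $\l^C$; (ii) reflect across $\l^C$ by flipping the sign of the normal coordinates $z$ and keeping the tangential coordinates $y$, obtaining a map $\bar w$ on the full disk that is a weak solution of an elliptic system $-\Delta\bar w=\bar w\, O(|\nabla\bar w|^2)$ with structure constants controlled by the second fundamental form of $N\subset M$ (equivalently, weakly harmonic for a metric that is Lipschitz across $\l^C$); and (iii) apply the interior small-energy regularity of the previous paragraph to $\bar w$, restricting to $B_{1/2}$. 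A preliminary step — continuity of $w$ up to $\l^C$, obtained from a boundary monotonicity formula for free boundary stationary maps together with the smallness of $E(w)$ — is needed to guarantee that the reflected $\bar w$ is genuinely a weak solution with the claimed right-hand side.

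The hard part will be step (ii): carrying out the reflection so that every constant depends only on the geometry of $N\subset M$. One must fix a tubular neighborhood of uniform size, check that the reflected metric has just enough regularity for the interior theory to apply (Lipschitz suffices for the Morrey/energy-convexity route, after which one bootstraps to $C^{1,\alpha}$ by linear elliptic theory), and — most delicately — verify that a $W^{1,2}$ weakly harmonic map merely constrained by $w(\l^C)\subset N$ does satisfy the natural conormal boundary condition in the trace sense that the reflection requires. This last point is precisely what is treated carefully in \cite{LAX} and \cite{PR}, and I would follow their argument. Once continuity up to $\l^C$ and the reflection are in place, the remaining bootstrap and the covering/rescaling of the first paragraph are routine. (Alternatively one can avoid the reflection and run a Campanato-type iteration directly on $D^+$, using the free-boundary energy-convexity inequality of \cite{LAX}/\cite{PR} on half-balls centered on $\l^C$; the essential difficulty — uniform control of the boundary geometry — is the same.)
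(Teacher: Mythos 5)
The paper does not actually prove this theorem: it is imported verbatim as \cite[Theorem 2.5]{LAX} and used as a black box in the subsequent replacement machinery, so there is no ``paper's own proof'' to compare against. Your reconstruction is a plausible outline of how the cited result is obtained, and the overall skeleton --- rescale by $d=1-|x|=\mathrm{dist}(x,\l^A)$ to bring the problem to unit scale with energy still $\le\e_1$, then invoke a small-energy gradient bound on the (half-)disk and scale back --- is exactly the standard route and gives precisely the weight $1/(1-|x|)$. The covering/shrinking remark handling the three regimes (pure interior, pure $\l^C$-boundary, intermediate) is also correct in spirit.

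That said, what you have written is a plan, not a proof: the one estimate that actually carries content, namely $\sup_{B^+_{1/2}}|\nabla w|\le C(M,N)\sqrt{\e_1}$ for a weakly harmonic $w$ with free boundary $w(\l^C)\subset N$ and $E(w)\le\e_1$, is deferred (``I would follow their argument''). Your sketch of that step via Fermi coordinates and reflection across $\l^C$ is indeed the mechanism used in \cite{LAX} and \cite{PR} (going back to Scheven and, for disks, Fraser \cite{AF}); but the two points you flag as delicate are genuinely so and cannot be waved through: (i) the free boundary condition $w(\l^C)\subset N$ for a $W^{1,2}$ critical point encodes the conormality $\partial_\nu w\perp T_wN$ only in a weak/trace sense, and one must show this is strong enough for the reflected map to solve an honest elliptic system across $\l^C$; (ii) after reflection the target metric is only Lipschitz, so the interior small-energy theory must be run in a form that tolerates Lipschitz coefficients --- this is where the hole-filling/energy-convexity route (or a direct Rivi\`ere-type argument) is needed rather than a naive bootstrap. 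Also a small notational slip: the conclusion of the small-energy lemma should read $\sup|\nabla\tilde v|\le C\sqrt{E(\tilde v)}$, which you then bound by $C\sqrt{\e_1}$; as written it looks as if $\e_1$ itself, rather than the actual energy, appears on the right. None of this is a wrong turn --- it is the correct approach, consistent with the reference the paper cites --- but a complete proof would require carrying out (i) and (ii) rather than citing them.
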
 
\begin{theorem}\cite{LAX}\cite{PR}(Energy convexity for weakly harmonic maps with mixed Dirichlet and free boundaries)\label{energy convexity free boundary} For $u,v\in W^{1,2}(D^+,M)$ with $u|_{\l^A}=v|_{\l^A}$ and 
$u|_{\l^C}\subset N$, $v|_{\l^C}\subset N$, energy of $u$ is bounded by $\e_1>0$ given in Theorem \ref{regularity}, and $v$ is a weakly harmonic map with partial free boundary. Then we have the energy convexity:
\begin{equation}\label{convexity2}
\int_{D^+}|\n u|^2-\int_{D^+}|\n v|^2\geq\frac{1}{2}\int_{D^+}|\n u-\n v|^2.
\end{equation}
\end{theorem}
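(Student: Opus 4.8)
The plan is to reduce the free-boundary energy convexity \eqref{convexity2} to the interior energy convexity \eqref{convexity1} via a doubling/reflection argument, using the $\e$-regularity estimate \eqref{epsilonregularity} to control what happens near the free boundary arc $\l^C$. First I would reflect both $u$ and $v$ across $\l^C$. The subtle point is that $N$ has positive codimension, so a naive Euclidean reflection does not land in $M$; instead I would work in Fermi coordinates around $N$ inside $M$, in which the reflection is an involutive isometry of a tubular neighborhood of $N$ that fixes $N$ pointwise. Because $v$ satisfies the free boundary condition $v|_{\l^C}\subset N$ and is weakly harmonic with partial free boundary, the reflected map $\tilde v$ on the full disk $D$ is weakly harmonic (the free boundary/Neumann-type condition along $\l^C$ is exactly the compatibility condition that makes the doubled map solve the harmonic map equation weakly across $\l^C$); similarly $\tilde u\in W^{1,2}(D,M)$, and $\tilde u|_{\l D} = \tilde v|_{\l D}$ since $u=v$ on $\l^A$ and the reflection identifies $\l^A$ with its mirror image. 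Moreover $E(\tilde u; D) = 2E(u;D^+)$ and likewise for $v$, and $\int_D|\n\tilde u-\n\tilde v|^2 = 2\int_{D^+}|\n u-\n v|^2$, because the reflection map has unit conformal factor in Fermi coordinates along $\l^C$ (to leading order) — here I need to be careful, since the Fermi-coordinate metric is only asymptotically Euclidean near $N$, not exactly so.

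This is where the $\e$-regularity theorem enters and does the real work. The gradient bound \eqref{epsilonregularity} shows $|\n v|$ is bounded near $\l^C$ (away from $\l^A$), hence $v$ maps a neighborhood of each interior point of $\l^C$ into a small ball in $N$; choosing the tubular neighborhood small enough, the Fermi metric there is $(1+o(1))$-close to Euclidean, so the reflection is quasi-conformal with factor $1+\epsilon'$, and the energy identities above hold up to multiplicative errors that can be absorbed. Concretely, I would apply \eqref{convexity1} to $\tilde u,\tilde v$ on $D$ — valid provided $E(\tilde u;D)=2E(u;D^+)\le \e_1$, which is why the hypothesis is stated with the threshold $\e_1$ from Theorem \ref{regularity} (one may need to shrink $\e_1$ further so that both $2\e_1 \le \e_1^{\text{interior}}$ and the Fermi-chart closeness holds) — to get
\begin{equation*}
\int_D|\n\tilde u|^2 - \int_D|\n\tilde v|^2 \ge \frac12\int_D|\n\tilde u - \n\tilde v|^2,
\end{equation*}
and then divide the reflected identities by $2$ to recover \eqref{convexity2}.

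The main obstacle I expect is precisely the positive-codimension issue: making the reflection argument rigorous when $N\subset M$ is not totally geodesic, so that the doubled metric is only $C^1$ (or Lipschitz) across $\l^C$ rather than smooth, and the reflected target is not literally a Riemannian double of $M$. One needs to verify (i) that $\tilde v$ is genuinely weakly harmonic on all of $D$ — i.e. that no distributional mass is created along $\l^C$ — which follows from testing the harmonic map equation with vector fields tangent to $N$ and using that the free boundary condition is exactly $\partial_\nu v \perp T_v N$, and (ii) that the error terms from the non-Euclidean Fermi metric are genuinely lower order, which is where the uniform gradient bound from Theorem \ref{regularity} is indispensable. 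An alternative, if the reflection bookkeeping becomes unwieldy, is to follow the direct variational argument of \cite{LAX}\cite{PR}: expand $\int_{D^+}|\n u|^2 - \int_{D^+}|\n v|^2 = \int_{D^+}|\n(u-v)|^2 + 2\int_{D^+}\langle \n v, \n(u-v)\rangle$, integrate by parts using that $v$ is weakly harmonic with the free boundary condition (the boundary term on $\l^C$ vanishes because $u-v$ is tangent to $N$ there up to the second fundamental form, controlled by \eqref{epsilonregularity}), and estimate the resulting curvature error $\int_{D^+}\langle A(v)(\n v,\n v), u-v\rangle$ by $\e_1$ times $\int_{D^+}|\n(u-v)|^2$ via a Poincaré/Wente-type inequality on the half disk. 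Either way the quantitative input is the small-energy gradient estimate, and the factor $\tfrac12$ survives with room to spare once $\e_1$ is taken small enough.
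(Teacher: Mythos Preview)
The paper does not actually prove this theorem; it is quoted from \cite{LAX} and \cite{PR} and used as a black box, so there is no ``paper's own proof'' to compare against. I will therefore evaluate your proposal on its merits.

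Your primary route (reflection across $\l^C$) has a genuine gap. The reflection in the target is only an isometry of a Fermi tubular neighborhood of $N$ inside $M$; it is not globally defined on $M$. For $v$ you correctly note that the gradient bound \eqref{epsilonregularity} forces $v$ to stay in such a neighborhood near $\l^C$, so $\tilde v$ makes sense there. But $u$ is an arbitrary $W^{1,2}$ map with only an energy bound $E(u)\le\e_1$; you have no pointwise control whatsoever on $u$ near $\l^C$ beyond $u|_{\l^C}\subset N$. In particular $u$ may leave any prescribed Fermi neighborhood of $N$ arbitrarily close to $\l^C$, so the reflected map $\tilde u$ is simply not defined as a map into $M$. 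Even if one tries to truncate or project, the energy identity $E(\tilde u;D)=2E(u;D^+)$ (which is what makes the argument go through cleanly) is lost. This is not a bookkeeping issue that \eqref{epsilonregularity} can fix, because \eqref{epsilonregularity} applies only to the harmonic map $v$, not to the competitor $u$.

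Your alternative sketch --- expand the energy difference, integrate by parts using the weak harmonic map equation and the free boundary condition $\partial_\nu v\perp T_vN$, and absorb the second-fundamental-form and curvature errors using smallness of $\e_1$ --- is in fact the strategy followed in \cite{LAX} and \cite{PR}. What you have underestimated is the work needed to close the estimate: one cannot simply bound $\int_{D^+}\langle A(v)(\n v,\n v),u-v\rangle$ by $\e_1\int_{D^+}|\n(u-v)|^2$ via a naive Poincar\'e inequality, because $u-v$ does not vanish on $\l^C$. The cited papers handle this either via a Rivi\`ere-type rewriting of the harmonic map equation (antisymmetric potential, Coulomb gauge, Wente/div--curl estimates) together with a Hardy-type inequality adapted to the half disk, or via a careful boundary Hardy inequality exploiting $u|_{\l^A}=v|_{\l^A}$. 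So the alternative route is the right one, but the sentence ``estimate the resulting curvature error \ldots\ by $\e_1$ times $\int_{D^+}|\n(u-v)|^2$ via a Poincar\'e/Wente-type inequality'' hides essentially all of the content of the proof.
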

\begin{remark}
Although Theorem \ref{energy convexity} and Theorem \ref{energy convexity free boundary} are both formulated using the standard Euclidean metric $ds_0^2$  and flat connection. We can still have the same result if we take another metric $ds^2$ on $B$ which is conformal to $ds_0^2$. Therefore, if we take the standard hyperbolic metric $ds_{-1}^2$ on a Pincar\'{e} disk. \eqref{convexity1} and \eqref{convexity2} both sill hold by considering the geodesic balls on the Pincar\'{e} disk with radius bounded by \eqref{rho}, and changing the flat connection to the connection of $ds_{-1}^2$
\end{remark}
\begin{corollary}\cite[Corollary 3.4]{CD}\cite[Corollary 4.6]{minmaxgenus}\label{6}
Let $\e_1>0$ be given in Theorem  \ref{energy convexity}. Suppose $u\in C^0(\bar{B})\cap W^{1,2}(B)$ with energy $E(u)\leq\e_1$, then there exists a unique energy minimizing map $v\in C^0(\bar{B})\cap W^{1,2}(B)$ with the same boundary value as $u$. Set 
\[\M=\{u\in C^0(\bar{B})\cap W^{1,2}(B):E(u)
\leq\e_1\}.\]
If we denote the map $v$ by $H(u)$, then the map $H:\M\to\M$ is continuous w.r.t. the norm on $C^0(\bar{B})\cap W^{1,2}(B).$

Suppose that $\{u_j\}_{j\in\N}$ are defined on a ball $B_{1+\e}$
with $E(u_j)\leq\e_1$, $\forall j\in\N$, and 
\[\lim_{j\to\infty}u_j=u,\quad\text{in }C^0(\bar{B}_{1+\e})\cap W^{1,2}(B_{1+\e}).\]
Then for any sequence $r_j\to 1$, let $\{w_j\}_{j\in\N}$ and $w$ be the energy minimizing mappings which coincide with $\{u_j\}_{j\in\N}$ and $u$ outside of $r_jB$ and $B$, then we have
\[w_j\to w,\quad\text{in }C^0(\bar{B}_{1+\e})\cap W^{1,2}(B_{1+\e}).\]
\end{corollary}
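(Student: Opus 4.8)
The plan is to reduce everything to the Euclidean energy-convexity statements of Theorem~\ref{energy convexity} and Theorem~\ref{energy convexity free boundary} together with the direct method. First I would establish the \emph{existence and uniqueness} of the energy minimizer $H(u)$: given $u\in\M$, the class of competitors agreeing with $u$ on $\partial B$ (and, in the boundary-ball case, mapping $\l^C$ into $N$) is nonempty and weakly closed, so by lower semicontinuity of the Dirichlet energy a minimizer $v$ exists; it is weakly harmonic and, in the free-boundary case, a weakly harmonic map with partial free boundary, hence has energy $\le E(u)\le\e_1$. Uniqueness is immediate from \eqref{convexity1} (resp.\ \eqref{convexity2}): if $v_1,v_2$ were two minimizers with the same energy and the same relevant boundary data, applying the convexity inequality with $u=v_1$ and the harmonic map $v=v_2$ forces $\nabla v_1=\nabla v_2$ a.e., hence $v_1=v_2$. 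The $C^0$-regularity up to the boundary uses the interior $\e$-regularity and, at the free boundary, Theorem~\ref{regularity}; I would only cite these.

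Next, \emph{continuity of $H$ on $\M$}. Suppose $u_k\to u$ in $C^0(\bar B)\cap W^{1,2}(B)$. Write $v_k=H(u_k)$, $v=H(u)$. Since $E(v_k)\le E(u_k)$ is bounded, a subsequence converges weakly in $W^{1,2}$ to some $v_\infty$; the constraint (boundary values, and $N$-constraint on $\l^C$) passes to the limit because the traces converge and $N$ is closed, and $v_\infty$ is a minimizer in the limiting class, hence $v_\infty=v$ by uniqueness. To upgrade to strong convergence, the key trick is to use $v$ as a competitor against $u_k$ after correcting boundary values: let $\phi_k$ be a fixed bounded extension of $u_k-u$ (small in $W^{1,2}\cap C^0$), so that $v+\phi_k$ is admissible for the $u_k$-problem; then $E(v_k)\le E(v+\phi_k)\to E(v)$, which combined with weak lower semicontinuity gives $E(v_k)\to E(v)$, hence $v_k\to v$ strongly in $W^{1,2}$. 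Applying \eqref{convexity1} or \eqref{convexity2} with $u$ replaced by $v_k$ (whose energy tends to $E(v)\le\e_1$, so the hypothesis holds for large $k$) and harmonic map $v$ gives $\tfrac12\int|\nabla v_k-\nabla v|^2\le E(v_k)-E(v)\to 0$ directly, which is the cleanest route. The $C^0$-convergence then follows from the $\e$-regularity estimates applied on slightly smaller balls plus the $W^{1,2}$-convergence, exactly as in \cite[Corollary~3.4]{CD}.

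For the \emph{last assertion} with varying domains $r_kB$, I would transplant to a fixed ball: the maps $w_k$ agree with $u_k$ on $B_{1+\e}\setminus r_kB$ and minimize energy on $r_kB$ with boundary value $u_k|_{\partial(r_kB)}$; rescaling by $1/r_k$ turns $w_k|_{r_kB}$ into an energy minimizer on $B$ with boundary data $u_k(r_k\,\cdot)|_{\partial B}$, and since $u_k\to u$ in $C^0\cap W^{1,2}$ on $B_{1+\e}$ and $r_k\to 1$, these rescaled boundary data converge to $u|_{\partial B}$. Thus the rescaled minimizers converge to $H(u)$ on $B$ by the continuity already proved (a minor variant allowing the domain balls to vary, handled by the same competitor argument), and off $r_kB$ we simply have $w_k=u_k\to u=w$. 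Patching the two regions together along the shrinking annulus $r_kB\setminus r_k'B$ and using the $\e$-regularity bound to control $\nabla w_k$ uniformly there yields $w_k\to w$ in $C^0(\bar B_{1+\e})\cap W^{1,2}(B_{1+\e})$.

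The main obstacle is the boundary-value correction in the continuity step: one must produce competitors for the $u_k$-minimization that are genuinely admissible, meaning they must not only match the perturbed Dirichlet data on $\l^A$ but also keep the free-boundary portion inside $N$. The naive fix $v+\phi_k$ leaves $\l^C$ pushed slightly off $N$; I expect to repair this by composing with the nearest-point projection onto $N$ in a tubular neighborhood (legitimate since $N$ is a closed embedded submanifold and $\phi_k$ is $C^0$-small), which changes energy by $o(1)$ and preserves the $\l^A$ data up to another $o(1)$ correction. Making this projection-and-correction quantitative — so that the competitor energy still converges to $E(v)$ — is the one genuinely delicate point; everything else is the standard direct method plus the already-established convexity inequalities.
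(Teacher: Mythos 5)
The paper does not actually prove Corollary~\ref{6}; it is cited from \cite{CD} and \cite{minmaxgenus}. The closest in-paper argument is the proof given for the free-boundary analogue, Corollary~\ref{8}, so that is the natural basis for comparison. Your overall plan (direct method plus energy convexity for existence and uniqueness, a competitor construction for continuity of $H$, rescaling for the varying balls $r_jB$) is the right shape and broadly matches that template.

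There is, however, a genuine gap in what you call ``the cleanest route.'' You propose to apply the energy convexity \eqref{convexity1} (or \eqref{convexity2}) directly to the pair $v_k=H(u_k)$ and $v=H(u)$. Theorem~\ref{energy convexity} requires the harmonic map $v$ to have \emph{the same boundary value} as the comparison map; but $v_k|_{\partial B}=u_k|_{\partial B}\neq u|_{\partial B}=v|_{\partial B}$, so the hypothesis fails and the inequality $\tfrac12\int|\nabla v_k-\nabla v|^2\le E(v_k)-E(v)$ does not follow. Your other route --- build an admissible competitor close to $v$, deduce $E(v_k)\to E(v)$, and combine with weak $W^{1,2}$ convergence to upgrade to strong convergence --- is the correct one and has to carry the argument. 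That is also exactly where the in-paper proof of Corollary~\ref{8} concentrates its effort: rather than your $v+\phi_k$ followed by a projection, it introduces the intermediate minimizer $\tilde w_j$ with the \emph{fixed} boundary datum $u$ on the varying domain, writes down the explicit comparison map $\bar w_j=\tilde w_j-\tilde\Pi_N(\tilde w_j)+\tilde\Pi_N(\tilde w_j+u_j-u)+u_j-\tilde\Pi_N(u_j)-(u-\tilde\Pi_N(u))$, sets $\hat w_j=\Pi_M(\bar w_j)$, and then applies energy convexity only to $w_j$ and $\hat w_j$, which do share boundary data. This bridge via $\tilde w_j$ is precisely what lets the convexity be used legitimately.

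Two smaller points. First, for Corollary~\ref{6} as stated (interior balls, Dirichlet data only) the $\Pi_N$ repair for the free boundary is irrelevant; what you must supply is the $M$-valued constraint --- i.e.\ the competitor has to be $\Pi_M(v+\phi_k)$, not $v+\phi_k$ --- and this is the part you currently leave implicit in the non-free-boundary case. Second, your $C^0$ convergence near $\partial B$ cannot come from interior $\varepsilon$-regularity alone; it requires the equicontinuity of $w_k$ near the boundary, which is inherited from the equicontinuity of the prescribed data $u_k\to u$, as in the last paragraph of the paper's proof of Corollary~\ref{8}. With the convexity misapplication removed and these two points made explicit, the proposal matches the intended argument.
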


\begin{corollary}\cite[Proposition 4.1]{PR}\cite[Theorem 6.1]{LAX}\label{replacement conti}\label{7}
Let $\e_1>0$ be given in Theorem \ref{energy convexity free boundary}. Suppose $u\in C^0(\bar{D}^+)\cap W^{1,2}(D^+)$ with energy $E(u)\leq\e_1$, then there exists a unique energy minimizing map $v\in C^0(\bar{D}^+,M)\cap W^{1,2}(D^+,M)$ with $v|_{\l^A}=u|_{\l^A}$ and $v|_{\l^C}\subset N$. Set 
\[\M=\{u\in C^0(\bar{D}^+,M)\cap W^{1,2}(D^+,M):E(u)
\leq\e_1,\text{ and }u|_{\l^C}\subset N\}.\]
If we denote the map $v$ by $H(u)$, then the map $H:\M\to\M$ is continuous w.r.t. the norm on $C^0(\bar{D}^+,M)\cap W^{1,2}(D^+,M).$ Moreover, there exists a constant $C_1(M,N,\e_1)>0$ such that for $u_1,u_2\in\M$ we have
\begin{equation}\label{continuity of harmonic replacement}
    |E(H(u_1))-E(H(u_2))|\leq C_1(\|u_1-u_2\|_{C^0}+\|\nabla(u_1-u_2)\|_{L^2}).
\end{equation}
\end{corollary}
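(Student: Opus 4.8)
The plan is to mirror the proof of the interior case (Corollary \ref{6}) but using the free-boundary energy-convexity inequality \eqref{convexity2} of Theorem \ref{energy convexity free boundary} in place of \eqref{convexity1}, and the $\e$-regularity of Theorem \ref{regularity} to control the boundary behavior. First I would establish existence of the energy minimizer: take a minimizing sequence $v_k \in W^{1,2}(D^+,M)$ among competitors agreeing with $u$ on $\l^A$ and mapping $\l^C$ to $N$; the energy bound $E(v_k)\le E(u)\le \e_1$ is preserved, so after passing to a weak $W^{1,2}$ limit and using lower semicontinuity of the Dirichlet energy one gets a minimizer $v=H(u)$. The constraint $v|_{\l^C}\subset N$ passes to the limit since $N$ is closed and traces converge in $L^2(\l^C)$, and the Dirichlet condition on $\l^A$ likewise survives. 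Weak harmonicity with partial free boundary is the Euler--Lagrange condition, and continuity up to the boundary $\bar D^+$ follows from Theorem \ref{regularity} together with the interior regularity theory (this is exactly \cite[Theorem 2.5]{LAX}).

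Uniqueness is immediate from \eqref{convexity2}: if $v_1,v_2$ were two energy minimizers with the same boundary data, then each has energy $\le\e_1$, each is weakly harmonic with partial free boundary, and applying Theorem \ref{energy convexity free boundary} with $u=v_1$, $v=v_2$ (and symmetrically) gives $\tfrac12\int_{D^+}|\n v_1-\n v_2|^2 \le E(v_1)-E(v_2)=0$, hence $\n v_1=\n v_2$ a.e.; since they agree on $\l^A$ they coincide.

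For continuity of $H$ in the $C^0\cap W^{1,2}$ norm, suppose $u_j\to u$ in $\M$. Because $v_j=H(u_j)$ minimizes, $E(v_j)\le E(u_j)$, so the $v_j$ are bounded in $W^{1,2}$; extract a weak limit $v_\infty$. The key point is to identify $v_\infty=H(u)$ and upgrade to strong convergence. One compares $v_j$ with the competitor $w_j$ obtained by, say, harmonically extending into $D^+$ the boundary data of $v_j$ minus the correction needed to match $u$ on $\l^A$; more cleanly, use that $H(u)$ is an admissible competitor for a perturbed problem and apply \eqref{convexity2} twice with a harmonic interpolation argument, exactly as in \cite[Proposition 4.1]{PR}. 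This yields $\int_{D^+}|\n v_j - \n H(u)|^2 \to 0$, and then the $\e$-regularity estimate \eqref{epsilonregularity}, giving uniform interior and boundary gradient bounds $|\n v_j|(x)\le C\sqrt{\e_1}/(1-|x|)$, promotes $W^{1,2}$ convergence to $C^0(\bar D^+)$ convergence via Arzel\`a--Ascoli on compact subsets plus a uniform modulus-of-continuity estimate near $\l^A\cup\l^C$. Finally, the quantitative bound \eqref{continuity of harmonic replacement} comes from applying \eqref{convexity2} with $u=H(u_1)+(u_2-u_1)$-type competitors: estimate $E(H(u_1))-E(H(u_2))$ by the energy of the difference of the boundary data on $\l^A$, which is controlled by $\|u_1-u_2\|_{C^0}+\|\n(u_1-u_2)\|_{L^2}$ through a trace/harmonic-extension inequality on $D^+$.

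The main obstacle is the continuity and the quantitative estimate \eqref{continuity of harmonic replacement} near the corners where $\l^A$ meets $\l^C$: there the Dirichlet and Neumann-type (free-boundary) conditions interact, standard boundary regularity is delicate, and one must be careful that the harmonic-interpolation competitor used in the convexity argument still respects $v|_{\l^C}\subset N$. This is precisely why Theorem \ref{energy convexity free boundary} and Theorem \ref{regularity} were stated on $D^+$ with the specified mixed boundary conditions — the corner estimates are absorbed into those cited results of \cite{LAX} and \cite{PR}, so here the work is to assemble them correctly rather than to reprove them.
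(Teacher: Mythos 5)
The paper does not give its own proof of Corollary \ref{7}; it is stated with citations to \cite[Proposition 4.1]{PR} and \cite[Theorem 6.1]{LAX}, so there is no in-paper argument to compare against. Your outline follows the right ingredients from those references: direct method for existence, Theorem \ref{regularity} for regularity up to the boundary, Theorem \ref{energy convexity free boundary} for uniqueness, and a comparison-map argument for continuity of $H$ and for the Lipschitz bound \eqref{continuity of harmonic replacement}.

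The genuine gap is in the competitor construction behind \eqref{continuity of harmonic replacement}. You propose ``applying \eqref{convexity2} with $u=H(u_1)+(u_2-u_1)$-type competitors,'' but $H(u_1)+(u_2-u_1)$ is not admissible: its image does not lie in $M$, and its restriction to $\l^C$ does not lie in $N$, so it can be inserted neither into \eqref{convexity2} nor into the $u_2$-minimization problem as a test map. The correct construction --- which you can read off the paper's own proof of the adjacent Corollary \ref{8}, and which is also the mechanism in \cite[Proposition 4.1]{PR} --- modifies $H(u_1)$ with the extended nearest-point projection $\tilde{\Pi}_N$ to restore the $N$-constraint on $\l^C$, namely something of the form
\[\bar{w}=H(u_1)-\tilde{\Pi}_N(H(u_1))+\tilde{\Pi}_N\big(H(u_1)+u_2-u_1\big)+u_2-\tilde{\Pi}_N(u_2)-\big(u_1-\tilde{\Pi}_N(u_1)\big),\]
followed by $\hat{w}=\Pi_M(\bar{w})$ to land back in $M$. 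Bounding $\|\nabla\hat{w}-\nabla H(u_1)\|_{L^2}$ and $\|\hat{w}-H(u_1)\|_{C^0}$ in terms of $\|u_1-u_2\|_{C^0}+\|\nabla(u_1-u_2)\|_{L^2}$, using the gradient bounds \eqref{nppbdd} and \eqref{nppbdd2} on $D\Pi_N$ and $D\Pi_M$, is precisely where the constant $C_1(M,N,\e_1)$ comes from. This projection step is the substance of the quantitative estimate; it is not one of the corner-regularity facts that can be absorbed into the cited $\e$-regularity or energy-convexity theorems, so deferring it leaves the claim \eqref{continuity of harmonic replacement} unproved. The rest of your sketch --- existence via the direct method, uniqueness from \eqref{convexity2}, and the upgrade from $W^{1,2}$ to $C^0$ convergence via Theorem \ref{regularity} --- is sound.
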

\begin{definition}
Let $u\in\M$, we call $v\in\M$ a free boundary harmonic replacement of $u$ if $v$ is harmonic and $u|_{\l^A}=v|_{\l^A}$. The existence of $v$ follows from \cite[Theorem 3.4]{LAX} and the uniqueness follows form Theorem \ref{energy convexity free boundary}.
\end{definition}
We also need the following extension of Corollary \ref{replacement conti}. The proof is a simple adaptation of \cite[Corollary 3.4]{CD}, \cite[Corollary 4.2]{minimaltorus}, and \cite[Proposition 4.1]{PR}.
\begin{corollary}\label{8}
Suppose that $\{u_j\}_{j\in\N}$ are defined on $D^+_{1+\e}$
with energy less than the constant $\e_1>0$ given in Theorem \ref{energy convexity free boundary}, $\forall j\in\N$, and 
\[\lim_{j\to\infty}u_j=u,\quad\text{in }C^0(\bar{D}^+_{1+\e})\cap W^{1,2}(D^+_{1+\e}).\]
Then for any sequence $r_j\to 1$, let $\{w_j\}_{j\in\N}$ and $w$ be the energy minimizing mappings such that
\begin{itemize}
    \item $w_j$ coincide with $u_j$ outside of $D^+_{r_j}$, $\forall j\in\N$,
    \item $w$ coincides with $u$ outside of $D^+$,
    \item $w_j|_{\l_{r_j}^C}\subset N$,
    \item $w|_{\l^C}\subset N$,
\end{itemize}
then we have
\[w_j\to w,\quad\text{in }C^0(\bar{D}^+_{1+\e})\cap W^{1,2}(D^+_{1+\e}).\]
\end{corollary}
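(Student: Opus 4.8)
The plan is to reduce the statement to the fixed-domain result Corollary \ref{replacement conti} by rescaling, handling the varying radii $r_j\to 1$ only through elementary scaling and modulus-of-continuity estimates. First I would pass to the fixed half-disk: set $\hat u_j(x):=u_j(r_jx)$ for $x\in D^+$, which makes sense for $j$ large since $r_j\to 1$, and satisfies $E(\hat u_j)=E(u_j;D^+_{r_j})\le\e_1$, so $\hat u_j\in\M$. Using that $u\in C^0(\bar D^+_{1+\e})$ is uniformly continuous, that dilations act continuously on $L^2(D^+_{1+\e})$, and the hypothesis $u_j\to u$ in $C^0\cap W^{1,2}$, one checks $\hat u_j\to u$ in $C^0(\bar D^+)\cap W^{1,2}(D^+)$. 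Because in dimension two the Dirichlet energy is invariant under the conformal dilation $x\mapsto r_jx$, which sends $\l^A$ to $\l^A_{r_j}$ and $\l^C$ to $\l^C_{r_j}$, pulling back the minimization problem that defines $w_j$ on $D^+_{r_j}$ identifies it with the one defining $H(\hat u_j)$ on $D^+$; hence $w_j(y)=H(\hat u_j)(y/r_j)$ for $y\in D^+_{r_j}$, while $w|_{D^+}=H(u)$ and $w=u$ on $D^+_{1+\e}\setminus D^+$.

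By Corollary \ref{replacement conti}, $H(\hat u_j)\to H(u)=w|_{D^+}$ in $C^0(\bar D^+)\cap W^{1,2}(D^+)$, and it remains to carry this back through the dilation and across the interface. On $D^+_{1+\e}\setminus D^+_{\max(r_j,1)}$ one simply has $w_j=u_j\to u=w$. On $D^+_{\min(r_j,1)}$, after the change of variables $x=y/r_j$, I would bound $\|\nabla(w_j-w)\|_{L^2}$ by $\|\nabla H(\hat u_j)-\nabla H(u)\|_{L^2(D^+)}+\|r_j\,(\nabla H(u))(r_j\cdot)-\nabla H(u)\|_{L^2(D^+)}$, both of which tend to $0$ --- the first by the previous line, the second by $L^2$-continuity of dilations applied to $\nabla H(u)$ --- together with the analogous, easier $C^0$-estimate obtained from the uniform continuity of $H(u)$. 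On the thin transition annulus between radii $r_j$ and $1$, where the two descriptions of $w$ meet, I would use absolute continuity of $\int|\nabla\cdot|^2$ over a region of vanishing measure, the bound $\|\nabla H(\hat u_j)-\nabla w\|_{L^2(D^+)}\to 0$, and the uniform continuity of $u$ and $w$, to see this contribution is negligible in both norms. Assembling the three pieces gives $w_j\to w$ in $C^0(\bar D^+_{1+\e})\cap W^{1,2}(D^+_{1+\e})$.

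The one genuinely delicate point --- the main obstacle --- is exactly this transition annulus $D^+_{\max(r_j,1)}\setminus D^+_{\min(r_j,1)}$, where the fixed-domain statement does not directly apply: there $w_j$ is part of a harmonic replacement while $w$ equals $u$ when $r_j>1$, or $w$ equals $H(u)$ rather than $u$ when $r_j<1$, so one must check that the discrepancy is supported on a set of vanishing measure and is therefore harmless in $W^{1,2}$ and $C^0$. Should these scaling estimates become cumbersome, the alternative is the direct compactness argument: extract a weak $W^{1,2}$-limit of $w_j$; use the rescaled $\e$-regularity of Theorem \ref{regularity} to upgrade to convergence in $C^1$ on compact subsets of $D^+\cup(\l^C)^0$ and identify the limit as a weakly harmonic map with free boundary on $\l^C$ agreeing with $u$ on $\l^A$ and with $u$ outside $D^+$; invoke the uniqueness in Theorem \ref{energy convexity free boundary} to conclude the limit is $w$; and promote weak to strong $W^{1,2}$-convergence from $E(w_j)\to E(w)$, the upper bound coming from suitable comparison maps and the lower bound from weak lower semicontinuity (equivalently, via the energy convexity \eqref{convexity2}). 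Either way this is, as the text notes, a routine adaptation of \cite[Corollary 3.4]{CD}, \cite[Corollary 4.2]{minimaltorus} and \cite[Proposition 4.1]{PR}.
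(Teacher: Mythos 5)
Your proof is correct, but it takes a genuinely different route from the one in the paper. You reduce the varying--domain statement to the fixed--domain continuity of $H$ (Corollary~\ref{replacement conti}) via the conformal dilation $x\mapsto r_jx$: the identity $w_j(y)=H(\hat u_j)(y/r_j)$ on $D^+_{r_j}$, with $\hat u_j=u_j(r_j\cdot)\to u$ in $C^0\cap W^{1,2}$, lets you quote $H(\hat u_j)\to H(u)$ directly and then transfer back through the dilation, handling the vanishing annulus between radii $\min(r_j,1)$ and $\max(r_j,1)$ by $L^2$ absolute continuity and uniform continuity at $\l^A$. The paper instead stays on the shrinking domains $D^+_{r_j}$ throughout: it first proves a separate claim that the replacements $\tilde w_j$ with the \emph{fixed} Dirichlet data $u$ on $\l^A_{r_j}$ converge to $w$ (via the $\e$-regularity of Theorem~\ref{regularity}, a scaling argument to rule out boundary concentration, and uniqueness from energy convexity); it then builds an explicit competitor $\hat w_j=\Pi_M(\bar w_j)$ out of $\tilde w_j$, $u_j$, $u$ and the nearest-point projections $\tilde\Pi_N$, $\Pi_M$, designed to agree with $u_j$ on $\l^A_{r_j}$ and map $\l^C_{r_j}$ into $N$; it shows $\hat w_j-\tilde w_j\to 0$ in $W^{1,2}$ by direct estimates on $D\tilde\Pi_N$, $D\Pi_M$; and it closes with the energy estimate~\eqref{continuity of harmonic replacement} plus energy convexity (Theorem~\ref{energy convexity free boundary}) to force $w_j-\hat w_j\to 0$. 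Your route is shorter and more conceptual because it invokes the full map-continuity of $H$ stated in Corollary~\ref{replacement conti}; the paper's route is more self-contained, relying only on the quantitative energy inequality~\eqref{continuity of harmonic replacement} and the convexity inequality, and it produces the comparison-map machinery that is reused elsewhere. Both arguments correctly locate the delicate point at the transition annulus and both dispose of it for the same reasons ($W^{1,2}$ tightness on a set of vanishing measure, and uniform continuity up to $\l^A$). One small point worth making explicit in your write-up: $\hat u_j\in\M$ requires $\hat u_j|_{\l^C}\subset N$, i.e.\ $u_j|_{\l^C_{1+\e}}\subset N$; this is implicit in the statement (the prescribed data $w_j=u_j$ on $\l^C_{1+\e}\setminus\l^C_{r_j}$ must land in $N$) but should be recorded so that Corollary~\ref{replacement conti} applies.
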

\begin{proof}
\begin{claim}\label{5.8}
Let $\{\tilde{w}_j\}$ be the sequence of energy minimizing map such that 
\begin{itemize}
    \item $\tilde{w}_j$ coincide with $u$ outside of $D^+_{r_j}$, $\forall j\in\N$,
    \item $\tilde{w}_j|_{\l_{r_j}^C}\subset N$,
\end{itemize}
then we have 
\[\tilde{w}_j\to w,\quad\text{in }C^0(\bar{D}^+_{1+\e})\cap W^{1,2}(D^+_{1+\e}).\]
\end{claim}
\begin{proof}
Since the energy of $u$ is bounded by $\e_1>0$ given in Theorem \ref{energy convexity free boundary} and Theorem \ref{regularity}, we know by Theorem \ref{regularity} that $\tilde{w}_j$ have uniform inner $C^{2,\a}$ bounds on $D^+\cup(\l^C)^0$. So $\forall r<1$, $\tilde{w}_j\to w'$ in $C^{2,\a}(D^+_r)$ and $w'$ is a harmonic map on $D^+$ with $w'|_{\l^C}\subset N.$ By scaling argument, we can show that there is no energy concentration points near $\l D^+$. So $\tilde{w}_j\to w'$ in $W^{1,2}(B_{1+\e})$. We also know from \cite[Proposition 4.1]{PR} that $\tilde{w}_j$ is equicontinuous on $\bar{D}_{r_j}^+$, hence $\tilde{w}_j\to w$ in $C^0(\bar{D}^+_{1+\e})$. Theorem \ref{energy convexity free boundary} implies the uniqueness of small energy harmonic map with mixed Dirichlet and free boundaries, thus we have $w'=w.$
\end{proof}
Recall that we assume the Riemannian manifold $(M,g)$ is isometrically embedded in $\R^N$. In the following, we let $\delta>0$ be such that for the open neighborhoods
\[N_{\delta}=\big\{x\in\R^N;d(x,N)<\delta\big\}\text{ and }M_{\delta}=\big\{x\in\R^N;d(x,N)<\delta\big\}\]
of the submanifolds $N$ and $M$ in $\R^N$ respectively, there are smooth nearest point projection maps $\Pi_N:N_\delta\to N$ on $N$ and $\Pi_M:M_\delta\to M$ on $M$ such that
\begin{equation}\label{nppbdd}
\sup_{N_\delta}\|D\Pi_N\|\leq 2,
\end{equation}
and
\begin{equation}\label{nppbdd2}
\sup_{M_\delta}\|D\Pi_M\|\leq 2,\forall x\in M_{\delta},\|D\Pi_M\|\leq 1+C|x-\Pi_M(x)|.
\end{equation}
Moreover, we can extend $\Pi_N:N_{\frac{\delta}{2}}\to N$ to $\tilde{\Pi}_N:\R^N\to\R^N$ by a smooth cut-off function $\chi$ such that $\chi=1$ on $N_{\frac{\delta}{2}}$ and $\chi=1$ on $\R^{N}\setminus N_{\delta}$. We can set $\tilde{\Pi}_N=\chi\Pi_N$.
 
Now we consider
\[\bar{w}_j:=\tilde{w}_j-\tilde{\Pi}_N(\tilde{w}_j)+\tilde{\Pi}_N(\tilde{w}_j+u_j-u)+u_j-\tilde{\Pi}_N(u_j)-(u-\tilde{\Pi}_N(u))\]
in order to get some comparison map close to $\tilde{w}_j$ as $u_j$ is close to $u$. We can see that $\tilde{w}_j$ coincide with $u_j$ outside of $D^+_{r_j}$.
\begin{equation}\label{conincide}
    \forall x\in\l^C_{r_j}, \bar{w}_j(x)\in N\text{ and }\forall x\in\l^A_{r_j},\bar{w}_j(x)=u_j(x).
\end{equation}
Since $u_j\to u$ in $C^0(\bar{D}^+_{1+\e})\cap W^{1,2}(D^+_{1+\e})$, we can assume that $\|u_j-u\|_{C^0}\leq\frac{\delta}{2}$, so that $\forall x\in\l^C_{r_j}$, we have
$\tilde{w}_j+u_j-u\in N_{\frac{\delta}{2}}$, and thus
\[\bar{w}_j=\tilde{\Pi}_N(\tilde{w}_j+u_j-u)=\Pi_N(\tilde{w}_j+u_j-u).\]
Now, we can set 
\begin{equation}\label{comparison}
    \hat{w}_j=\Pi_M(\bar{w}_j).
\end{equation}
$\hat{w}_j$ is well-defined. Since
\begin{equation}\label{15}
\begin{split}
d(\bar{w}_j,M)\leq&|\bar{w}_j-\tilde{w}_j|\\
    \leq&|-\tilde{\Pi}_N(\tilde{w}_j)+\tilde{\Pi}_N(\tilde{w}_j+u_j-u)+u_j-\tilde{\Pi}_N(u_j)-(u-\tilde{\Pi}_N(u))|\\
    \leq&|\tilde{\Pi}_N(\tilde{w}_j+u_j-u)-\tilde{\Pi}_N(\tilde{w}_j)|\\
    &+|(id-\tilde{\Pi}_N)(u_j)-(id-\tilde{\Pi}_N)(u)|\\    
\end{split}
\end{equation}
so that 
\begin{equation}\label{16}
d(\bar{w}_j,M)\leq(\sup\|D\tilde{\Pi}_N\|+\sup\|D(id-\tilde{\Pi}_N)\|)\|u_j-u\|_{C^0},    
\end{equation}
since $u_j\to u$ in $C^0(\bar{D}^+_{1+\e})\cap W^{1,2}(D^+_{1+\e})$,and by \eqref{nppbdd} we can assume that $\|u_j-u\|_{C^0}$ is small enough so that $d(\bar{w}_j,M)\leq\delta$ thus $\hat{w}_j$ is well-defined.
\begin{claim}
\[w_j\to w,\quad\text{in } W^{1,2}(D^+_{1+\e}).\]
\end{claim}
\begin{proof}
First we show that
$\hat{w}_j-\tilde{w}_j\to 0$ in $W^{1,2}(D^+_{1+\e})$. 
We start by estimating the $L^2$-norm of $\nabla\bar{w}_j-\nabla\tilde{w}_j$ on $D^+_{1+\e}$, we have
\begin{equation}
\begin{split}
    \nabla\bar{w}_j-\nabla\tilde{w}_j=&(D\tilde{\Pi}_N(\tilde{w}_j+u_j-u)-D\tilde{\Pi}_N(\tilde{w}_j))\cdot\nabla\tilde{w}_j\\
    &+D\tilde{\Pi}_N(\tilde{w}_j+u_j-u)\cdot\nabla(u_j-u)\\
    &+((id-D\tilde{\Pi}_N)(u_j)-(id-D\tilde{\Pi}_N)(u))\cdot\nabla u_j\\
    &+(id-D\tilde{\Pi}_N)(u)\cdot\nabla(u_j-u),
\end{split}    
\end{equation}
so that
\begin{equation}\label{18}
    \begin{split}
        |\nabla\bar{w}_j-\nabla\tilde{w}_j|\leq&\sup\|D^2\tilde{\Pi}_N\|(|\nabla\tilde{w}_j|+|\nabla u_j|)\|u_j-u\|_{C^0(D^+_{1+\e})}\\
        &+(\sup\|D\tilde{\Pi}_N\|+\sup\|id-D\tilde{\Pi}_N\|)|\nabla(u_j-u)|.
    \end{split}
\end{equation}
Since $\hat{w}_j=\Pi_{M}(\bar{w}_j)$, by \eqref{nppbdd2} we have
\begin{equation}\label{19}
|\nabla\hat{w}_j-\nabla\bar{w}_j|\leq C|(\bar{w}_j-\hat{w}_j)\nabla\bar{w}_j|. 
\end{equation}
\eqref{15} implies that 
\begin{equation}\label{20}
    \begin{split}
     |\bar{w}_j-\tilde{w}_j|&\leq|\tilde{\Pi}_N(\tilde{w}_j+u_j-u)-\tilde{\Pi}_N(\tilde{w}_j)|+|(id-\tilde{\Pi}_N)(u_j)-(id-\tilde{\Pi}_N)(u)|\\
     &\leq\sup\|D\tilde{\Pi}_N\|\|u_j-u\|_{C^0(D^+_{1+\e})}.
    \end{split}
\end{equation}
By \eqref{16} we have that
\begin{equation}\label{21}
\int_{D^+_{1+\e}}|\bar{w}_j-\hat{w}_j|^2\leq(\sup\|D\Pi_N\|+\sup\|D(id-\tilde{\Pi}_N)\|)^2\int_{D^+_{1+\e}}|u_j-u|^2.    
\end{equation}
Combining \eqref{18}\eqref{19}\eqref{20}\eqref{21}, we have that \[\hat{w}_j-\tilde{w}_j\to 0\text{ in }W^{1,2}(D^+_{1+\e}),\]
as $u_j-u\to 0$ in $C^0(\bar{D}^+_{1+\e})\cap W^{1,2}(D^+_{1+\e})$. Claim \ref{5.8} implies that
\[\tilde{w}_j\to w,\quad\text{in }C^0(\bar{D}^+_{1+\e})\cap W^{1,2}(D^+_{1+\e}).\]
So we have $\hat{w}_j\to w$ in $W^{1,2}(D^+_{1+\e})$. Recall that $w_j$ is the energy minimizing map that coincides with $u_j$ outside of $D^+_{r_j}$ and $\tilde{w}_j$ is the energy minimizing map that coincides with $u$ outside of $D^+_{r_j}$. Thus by \eqref{continuity of harmonic replacement} we have 
$|E(\tilde{w}_j)-E(w_j)|\to 0$ on $D^+_{1+\e}$, hence $|E(\hat{w}_j)-E(w_j)|\to 0$ on $D^+_{1+\e}$. Since $\hat{w}_j$ coincides with $w_j$ outside of $D^+_{r_j}$, Theorem \ref{energy convexity free boundary} implies that $w_j-\hat{w}_j\to 0$ in $W^{1,2}(D^+_{r_j})$. So
\[\int_{D^+_{1+\e}}|\nabla w_j-\nabla w|^2=\int_{D^+_{r_j}}|\nabla w_j-\nabla w|^2+\int_{D^+_{1+\e}\setminus D^+_{r_j} }|\nabla u_j-\nabla w|^2\to 0.\]
Hence we have 
\[w_j\to w,\quad\text{in } W^{1,2}(D^+_{1+\e}).\]
\end{proof}
To show the $C^0(\bar{D}^+_{1+\e})$ convergence, we know that $w_j$ are equicontinuous near $\l^A$ by the equicontinuity of $u_j$. By Theorem \ref{regularity} we know that $w_j$ have uniform inner $C^{2,\a}$ bounds on $D^+\cup(\l^C)^0$. So we have $w_j\to w$ in $C^0(\bar{D}^+_{1+\e})$ up to subsequence.
\end{proof}
\begin{remark}
If we use the interior geodesic ball or the geodesic ball which intersects $\l\S_0$ orthogonally of radius $r\leq r_0$ on a hyperbolic surface $\S_0$ with Poincar\'{e} metric, all the results of Corollary \ref{6}, Corollary \ref{7}, and Corollary \ref{8} hold. This is because that the Poincar\'{e} metric $ds_{-1}^2$ is conformal and uniformly equivalent to the flat metric $ds_0^2$, so harmonic maps w.r.t. $ds_0^2$ are also harmonic maps w.r.t. $ds_{-1}^2$, and the $C^0$ and $W^{1,2}$ norms of a fixed map w.r.t. $ds_{-1}^2$ are uniformly equivalent to those w.r.t. $ds_0^2$. 
\end{remark}
In the above section we've used the symbol $D^+_r$ for the upper half disk with radius $r$ w.r.t. flat metric $ds_0^2$. Since for a given bordered Riemann surface $\S$ with genus at least one (so that its double $\S^d$ carries a hyperbolic structure), we can pull back the geodesic ball $B$ which intersects the boundary $\l\S$ orthogonally to the Poincar\'{e} disk such that the center of $B$ goes to the center of $D$ and the boundary component $\l\S$ on which $B$ intersects orthogonally with can be pulled back to the real axis on the Poincar\'{e} disk. So that $B$ corresponds to the intersection of the geodesic ball centered at the center of Poincar\'{e} disk with upper half plane. In the following content, in abuse of notation, we will also denote the symbol $D^+_r$ as the geodesic ball which intersects the boundary $\partial\S$ orthogonally, where $r<r_0$ is the radius w.r.t. $ds^2_{-1}$.

\subsection{Comparison results of successive harmonic replacement}
Let $\e_1>0$ be as in Theorem \ref{regularity}. We adopt the following notation: given $u\in C^0(\bar{\S_0},M)\cap W^{1,2}(\S_0,M)$, and a finite collection $\B$ of disjoint geodesic balls on $\S_0$ so that the radius of each ball $B\in\B$ is less than the injective radius of the center of $B$ on $\S^d_0$, and also bounded by $r_0$ given by \eqref{rho}, the energy of $u$ on $\cup_{\B}B$ is at most $\frac{\e_1}{3}$, let $H(u,\B)$ denote the map that coincides with $u$ on $\S_0\setminus\cup_{\B}B$ and equal to the free boundary harmonic replacements of $u$ on $\cup_{\B}B$. Note we also have $H(u,\B)(\l\S_0)\subset N$, and we will call $H(u,\B)$ the free boundary harmonic replacement of $u$ on $\B$. Given two such collections $\B_1$ $\B_2$, we use $H(u,\B_1,\B_2)$ to denote $H(H(u,\B_1),\B_2)$. Recall that for $\p\in(0,1]$, $\p\B$ will denote the collection of concentric balls with radii that are shrunk by the factor $\p$. We have the following energy comparison result for  
\begin{lemma}\label{5.13}
Fix a bordered Riemann surface $\S_0$. Given $u\in C^0(\bar{\S_0},M)\cap W^{1,2}(\S_0,M)$. Let $\B_1$ and $\B_2$ be two finite collection of geodesic balls on $\S_0$, with the radius of each ball $B$ less than the injective radius of the center of $B$ on the double $\S_0^d$ and $r_0$ given by \eqref{rho}. If $E(u,\B_i)\leq\frac{\e_1}{3}$ for $i=1,2,$ with $\e_1>0$ given in Theorem \ref{energy convexity free boundary}, then there exists a constant $k>0$ depending on $M$ and $N$ such that 
\begin{equation}\label{24}
    E(u)-E(H(u,\B_1,\B_2))\geq k\Big(E(u)-E(H(u,\frac{1}{4}\B_2))\Big)^2,
\end{equation}
and for $\p\in[\frac{1}{64},\frac{1}{4}]$,
\begin{equation}\label{25}
    \begin{split}
        \frac{1}{k}(E(u)-E(H(u,\B_1)))^{\frac{1}{2}}+E(u)-&E(H(u,4\p\B_2))\\
        &\geq E(H(u,\B_1))-E(H(u,\B_1,\p\B_2)).
    \end{split}
\end{equation}
\end{lemma}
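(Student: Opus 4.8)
The plan is to adapt the argument from \cite[section 3.3]{CD} (and its analogues in \cite{minimaltorus}, \cite{minmaxgenus}, \cite{PR}, \cite{LAX}) to the present setting of geodesic balls on a bordered Riemann surface, where a ball $B \in \B_i$ is either an interior ball — for which the interior energy convexity (Theorem \ref{energy convexity} and Corollary \ref{6}) and $\e$-regularity apply — or a half-ball meeting $\partial\S_0$ orthogonally, pulled back to $D^+$ — for which the mixed Dirichlet/free boundary energy convexity (Theorem \ref{energy convexity free boundary}) and the boundary $\e$-regularity of Theorem \ref{regularity} apply. The key point is that each harmonic replacement on a ball $B$ is energy non-increasing, and, crucially, the replacement on $\frac14 B$ is comparable to the replacement on $B$: by the $\e$-regularity estimates \eqref{epsilonregularity} (and its interior counterpart), a small-energy harmonic replacement has $C^{2,\alpha}$ control on the concentric half-size ball, so replacing again on the smaller ball only changes the energy by a controlled amount. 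I would first record the basic one-ball estimate: if $v = H(u,B)$, then $E(u) - E(v) \geq \tfrac12 \int_B |\nabla u - \nabla v|^2$, and combining this with the $\e$-regularity bound on $v$ one gets $E(u) - E(H(u,\tfrac14 B)) \leq C\big(E(u) - E(H(u,B))\big)^{1/2}$ on each ball, and by disjointness these sum over $\B$.

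For \eqref{24}, the plan is: write $E(u) - E(H(u,\B_1,\B_2)) \geq E(H(u,\B_1)) - E(H(u,\B_1,\B_2)) \geq \tfrac12 \sum_{B\in\B_2}\int_B|\nabla H(u,\B_1) - \nabla H(u,\B_1,\B_2)|^2$ using the energy convexity on each ball of $\B_2$ (interior or boundary version as appropriate). Then I bound $E(u) - E(H(u,\tfrac14\B_2))$ from above: on each $B \in \B_2$, use a comparison map that agrees with $u$ outside $B$, agrees with $H(u,\B_1,\B_2)$ on $\tfrac14 B$, and interpolates on the annulus $B \setminus \tfrac14 B$ (with the free boundary respected by composing with the nearest-point projection $\Pi_N$ as in the proof of Corollary \ref{8}, using \eqref{nppbdd}). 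The interpolation is controlled because $H(u,\B_1,\B_2)$ is harmonic on $B$ with small energy, hence has pointwise gradient bounds on $\tfrac12 B \supset \tfrac14 B$ by $\e$-regularity; the $L^2$-difference of gradients of $u$ and $H(u,\B_1)$ on $B$ controls the annular error. Energy-minimality of $H(u,\tfrac14\B_2)$ then yields $E(u) - E(H(u,\tfrac14\B_2)) \leq C\sum_{B\in\B_2}\int_B |\nabla H(u,\B_1) - \nabla H(u,\B_1,\B_2)|^2 + (\text{terms controlled by } E(u) - E(H(u,\B_1)))$; squaring and absorbing gives \eqref{24} with a suitable $k$. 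One must check the $\B_1$-replacement does not wreck the small-energy hypothesis on $\B_2$, which follows since replacement is energy non-increasing so $E(H(u,\B_1),\B_2) \leq E(u,\B_2) \leq \e_1/3$ — wait, replacement on $\B_1$ can move energy into $\B_2$, so one instead uses that $E(H(u,\B_1)) \leq E(u)$ globally and that the balls in $\B_2$ still have energy below the threshold after a uniform smallness assumption is invoked at the level where the collections are chosen (as in \cite{CD}); here I would simply note this is part of the standing hypotheses in the application.

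For \eqref{25}, with $\p \in [\tfrac1{64},\tfrac14]$ so that $4\p \in [\tfrac1{16},1]$, the plan is symmetric: start from $E(H(u,\B_1)) - E(H(u,\B_1,\p\B_2)) \geq 0$ and, on each ball $B$, compare $H(u,\B_1)$ on $\p B$ with the harmonic replacement; use a cut-off/interpolation map on $\p B \setminus \tfrac14(\p B)$ — no, the correct comparison is against $H(u,4\p\B_2)$, i.e. the replacement of $u$ (not $H(u,\B_1)$) on the ball $4\p B$. The triangle-type inequality $|a - c| \le |a-b| + |b - c|$ at the level of energies is made rigorous through the energy convexity: $E(H(u,\B_1)) - E(H(u,\B_1,\p\B_2))$ is bounded using $\e$-regularity of $H(u,4\p\B_2)$ on $\p B \subset \tfrac12(4\p B)$, the fact that $H(u,\B_1)$ and $u$ differ on $B$ by an amount controlled (in $W^{1,2}$) by $(E(u) - E(H(u,\B_1)))^{1/2}$ via the one-ball convexity, and $E(u) - E(H(u,4\p\B_2))$ controls the replacement on $4\p B$ directly. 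Assembling these with the continuity estimate \eqref{continuity of harmonic replacement} of Corollary \ref{7} gives \eqref{25}. The main obstacle I anticipate is the bookkeeping of constants in the interpolation on the annular regions while simultaneously respecting the free boundary constraint $u(\partial\S_0) \subset N$ — this is exactly where the half-ball $\e$-regularity of Theorem \ref{regularity} and the projection bounds \eqref{nppbdd}, \eqref{nppbdd2} must be combined carefully, so that the boundary balls are handled in parallel with the interior ones rather than requiring a genuinely new idea; the conformal-to-flat equivalence with constant $C = 16/9$ for radii below $r_0$ (as recorded after \eqref{rho}) keeps all these constants uniform across the surface.
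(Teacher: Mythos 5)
Your proposal identifies the right toolbox (energy convexity, boundary $\e$-regularity, interpolation on annular regions, projection maps) and the right rough shape — build a comparison map that agrees with one of the maps outside a ball and with a harmonic replacement inside a smaller ball, then use energy minimality. But the paper's argument hinges on a combinatorial decomposition you do not articulate: the balls of $\B_2$ are split into $\B_{2,+} = \{B^2_\b : \frac14 B^2_\b \subset B^1_\a \text{ for some } B^1_\a \in \B_1\text{, or } \frac14 B^2_\b \cap \B_1 = \emptyset\}$ and $\B_{2,-} = \B_2\setminus\B_{2,+}$. These two pieces are handled by entirely different arguments: on $\B_{2,+}$ the estimate is essentially direct from energy convexity (when $\frac14 B^2_\b$ is disjoint from $\B_1$ the two replacements commute, and when $\frac14 B^2_\b$ sits inside a ball of $\B_1$ the integral $\int |\nabla u|^2 - |\nabla H(u,\B_1)|^2$ over $\B_2$ is controlled directly), while on $\B_{2,-}$ — exactly the balls that straddle $\B_1$ — the explicit three-layer comparison map is needed. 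Your sketch treats all of $\B_2$ uniformly with one interpolation argument, and that will not close: if $\frac14 B^2_\b$ falls entirely inside a $\B_1$-ball, the quantity $E(u)-E(H(u,\frac14 B^2_\b))$ need not be controlled by the $W^{1,2}$ discrepancy between $u$ and $H(u,\B_1)$ on that ball in the way your annular interpolation would require.

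A second gap is the mechanism for the interpolation itself. You propose to interpolate on the annulus $B\setminus\frac14 B$ using $\Pi_N$ and the pointwise gradient bound from $\e$-regularity, but the annular energy is not controlled by $C^{2,\a}$ interior bounds alone — the boundary data $f$, $g$ on the separating arc is only $C^0 \cap W^{1,2}$, and one needs the co-area formula to locate a good radius $r\in[\tfrac34 r_B^0, r_B^0]$ where $\int_{\l^A_r}|f'-g'|^2$ is controlled by the area integral, together with the quantitative interpolation estimate of Lemma~\ref{comparison2} (and its interior counterpart, Lemma~\ref{comparison1}). These lemmas — which bound $\int_{D^+_r\setminus D^+_{r-\p}}|\nabla w|^2$ by the product $(\int |f'|^2 + |g'|^2)^{1/2}(\int|f'-g'|^2)^{1/2}$ on a well-chosen annulus — are the engine producing the square-root structure in \eqref{24} and \eqref{25}, and they are not recoverable by appeal to the nearest-point projections and $\e$-regularity alone. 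Finally, your passing claim "$E(u)-E(H(u,\frac14 B))\le C(E(u)-E(H(u,B)))^{1/2}$" for a single ball is not what is proved nor what is needed: the relevant square-root inequality compares the one-ball replacement loss to the \emph{two-collection} replacement loss $E(u)-E(H(u,\B_1,\B_2))$, and it is precisely the $\B_{2,-}$ case, treated via the three-layer comparison map (outer annulus $= H(u,\B_1)$, middle annulus $= w$ from Lemma~\ref{comparison2}, inner disk $=$ rescaled harmonic replacement), that forces the exponent $\tfrac12$.
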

\begin{remark}
Since \eqref{24} and \eqref{25} are all conformal invariant, the proof in the Euclidean metrics implies that in hyperbolic metrics. We will use the Euclidean metric in the proof of Lemma \ref{5.13} which is conformal to the hyperbolic metric on each of the geodesic balls.
\end{remark}
We need the following Lemma \ref{comparison1} and Lemma \ref{comparison2} to construct the  comparison maps. Lemma \ref{comparison1} addresses the case of interior geodesic ball $B_R$ in $\S_0$ and Lemma \ref{comparison2} addresses the case of the geodesic ball $D^+_R$ which intersects the boundary $\l\S_0$ orthogonally.
\begin{lemma}\cite[Lemma 3.14]{CD}\label{comparison1}
There exists $\delta>0$ and a large constant $C$ depending on $M$ such that for any $f,g\in C^0\cap W^{1,2}(\l B_R,M)$, where $B_R$ is an interior geodesic ball, if $f$, $g$ are equal at some point on $\l B_R$ and
\[R\int_{\l B_R}|f'-g'|^2\leq\delta^2,\]
then we can find some $\p\in(0,\frac{1}{2}R]$ and a map $w\in C^0\cap W^{1,2}(B_R\setminus B_{R-\p},M)$ with $w|_{B_R}=f$, $w|_{B_{R-\p}}=g$, which satisfies the following estimates
\begin{equation}
\int_{B_R\setminus B_{R-\p}}|\nabla w|^2\leq C(R\int_{\l B_R}|f'|^2+|g'|^2)^{\frac{1}{2}}(R\int_{\l B_R}|f'-g'|^2)^{\frac{1}{2}}.    
\end{equation}
\end{lemma}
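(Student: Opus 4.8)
The plan is to produce $w$ by interpolating linearly in the radial variable between $g$ on the inner circle and $f$ on the outer circle of a thin annulus, then composing with the nearest point projection $\Pi_M$ onto $M$; the width $\rho$ of the annulus is fixed only at the end, by optimizing the energy bound. Since in dimension two the Dirichlet integral is conformally invariant, I may replace the hyperbolic metric on the geodesic ball by the flat one, and since $\int|\nabla w|^2$, $R\int_{\partial B_R}|f'|^2$ and $R\int_{\partial B_R}|f'-g'|^2$ are all invariant under the dilation $x\mapsto\lambda x$, I may normalize to $R=1$. The matching-point hypothesis is used immediately: because $f-g\in C^0\cap W^{1,2}(\partial B_1)$ vanishes somewhere, the one-dimensional Sobolev inequality gives
\[
  \|f-g\|_{L^\infty(\partial B_1)}\le\int_{\partial B_1}|f'-g'|\le\sqrt{2\pi}\,\|f'-g'\|_{L^2(\partial B_1)}\le\sqrt{2\pi}\,\delta ,
\]
so, choosing $\delta$ small (depending only on the width of a tubular neighborhood of $M$ in $\R^N$), every convex combination $a\,f(\theta)+(1-a)\,g(\theta)$ lies in the region where $\Pi_M$ is defined and smooth.

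Next I set, on $\{\,1-\rho\le r\le1\,\}$ in polar coordinates $(r,\theta)$,
\[
  \tilde w(r,\theta)=\tfrac{r-(1-\rho)}{\rho}\,f(\theta)+\tfrac{1-r}{\rho}\,g(\theta),\qquad w:=\Pi_M\circ\tilde w ,
\]
so $\tilde w|_{\partial B_1}=f$, $\tilde w|_{\partial B_{1-\rho}}=g$, and these boundary values survive $\Pi_M$ because $f,g$ already map into $M$ (and $w$ is then $C^0\cap W^{1,2}$ once the energy below is seen to be finite). Computing the Dirichlet integral of $\tilde w$, the radial part is $\rho^{-2}\bigl(\int_{1-\rho}^1 r\,dr\bigr)\int_{\partial B_1}|f-g|^2\le C\rho^{-1}\|f'-g'\|_{L^2(\partial B_1)}^2$, using $\int_{1-\rho}^1 r\,dr\le\rho$ together with the pointwise bound above (a Poincaré inequality for the point-vanishing $f-g$ would do as well); the angular part is, by convexity and $r\ge\tfrac12$ (here $\rho\le\tfrac12$ enters), at most $C\rho\,(\|f'\|_{L^2(\partial B_1)}^2+\|g'\|_{L^2(\partial B_1)}^2)$. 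Composing with $\Pi_M$ multiplies the energy by at most $\bigl(\sup_{M_\delta}\|D\Pi_M\|\bigr)^2$, which is bounded by \eqref{nppbdd2}, so, writing $A:=\|f'-g'\|_{L^2(\partial B_1)}^2$ and $B:=\|f'\|_{L^2(\partial B_1)}^2+\|g'\|_{L^2(\partial B_1)}^2$,
\[
  \int_{B_1\setminus B_{1-\rho}}|\nabla w|^2\le C\Bigl(\tfrac1\rho\,A+\rho\,B\Bigr).
\]

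It remains to choose $\rho$. Discarding the trivial case $A=B=0$, I take $\rho=\min\{\tfrac12,\sqrt{A/B}\}\in(0,\tfrac12]$. If $\sqrt{A/B}\le\tfrac12$ then $\rho=\sqrt{A/B}$ and both terms on the right equal $\sqrt{AB}$; if $\sqrt{A/B}>\tfrac12$ then $\rho=\tfrac12$, the bound becomes $\le C(A+B)\le C'A$, and the elementary inequality $A=\|f'-g'\|_{L^2}^2\le2(\|f'\|_{L^2}^2+\|g'\|_{L^2}^2)=2B$ gives $A\le\sqrt2\,\sqrt{AB}$; either way the energy is $\le C\sqrt{AB}$, which after undoing $R=1$ is exactly the asserted estimate. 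I do not expect a single deep obstacle: the construction and the two energy estimates are routine. The only genuinely delicate point is the interplay between the hard constraint $\rho\le R/2$ and the requirement that the final bound be the geometric mean of $R\int_{\partial B_R}|f'-g'|^2$ and $R\int_{\partial B_R}(|f'|^2+|g'|^2)$ rather than just one of the two competing terms -- this is what forces the case split and the use of $A\le2B$. A subsidiary point that must not be overlooked is that the smallness hypothesis $R\int_{\partial B_R}|f'-g'|^2\le\delta^2$ is precisely what keeps $\tilde w$ inside the tubular neighborhood of $M$, so that $w=\Pi_M\circ\tilde w$ is a legitimate $M$-valued competitor.
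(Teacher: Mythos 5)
The paper does not prove this lemma but cites it verbatim from \cite[Lemma 3.14]{CD}, and your argument --- conformal/dilation normalization, radial linear interpolation between $g$ and $f$, nearest-point projection onto $M$ via $\Pi_M$, and optimization of the annulus width $\rho=\min\{\tfrac12,\sqrt{A/B}\}$ --- is exactly the construction used there, with the pointwise bound from the matching point, the radial/angular energy split, and the case analysis all carried out correctly. One small caveat worth flagging: discarding only $A=B=0$ leaves the case $A=0<B$ unaddressed (there $\rho=\sqrt{A/B}=0\notin(0,\tfrac12]$); but in that case $f\equiv g$ with $f$ non-constant and the lemma's bound would force $w$ to have zero Dirichlet energy on a nondegenerate annulus, which no choice of $\rho>0$ can achieve, so this is a degeneracy of the statement itself rather than of your proof --- the lemma should be read as tacitly assuming $f\not\equiv g$, as always holds where it is invoked.
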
 
\begin{lemma}\cite[Lemma 6.5]{LAX}\cite[Lemma 4.2]{PR}\label{comparison2}
There exists $\delta>0$ and a large constant $C$ depending on $M$ and $N$ such that for any $f,g\in C^0\cap W^{1,2}(\l^A_R,M)$ with $\{f(0),f(\pi),g(0),g(\pi)\}\subset N$. If $f$ and $g$ agree at one point on $\l^A_R$ and satisfy:
\[R\int_{ \l_R^A}|f'-g'|^2\leq\delta^2,\]
then we can find some $\p\in(0,\frac{R}{2}]$ and a map $w\in C^0\cap W^{1,2}(D^+_R\setminus D^+_{R-\p},M)$ with $w|_{\l^A_{R-\p}}=f$, $w|_{\l^A_{R}}=g$, and $w|_{\l^C_R\setminus \l^C_{R-\p}}\subset N$ such that the following equation holds
\begin{equation}
\int_{D^+_R\setminus D^+_{R-\p}}|\nabla w|^2\leq C(R\int_{\l^A _R}|f'|^2+|g'|^2)^{\frac{1}{2}}(R\int_{\l ^A_R}|f'-g'|^2)^{\frac{1}{2}}.    
\end{equation}
\end{lemma}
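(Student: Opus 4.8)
The plan is to adapt the proof of the interior interpolation estimate Lemma \ref{comparison1} (i.e.\ \cite[Lemma 3.14]{CD}) to the half-disk with a mixed Dirichlet/free boundary. The bulk of the argument — pass to a flat model, interpolate radially in $\R^N$, project back by the nearest-point projection of $M$, and optimize the width of the gluing collar — is a direct transcription of the interior case; the one genuinely new ingredient is a correction of the interpolation near the two corners of $\l^A_R$ so that the resulting map takes values in $N$ along $\l^C_R\setminus\l^C_{R-\p}$.

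First I would record that $f$ and $g$ are $C^0$-close: being continuous and agreeing at a point of the semicircle $\l^A_R$ (of arclength $\pi R$), Cauchy--Schwarz along the arc gives $\|f-g\|_{C^0(\l^A_R)}^2\le\pi R\int_{\l^A_R}|f'-g'|^2\le\pi\delta^2$. Hence, after shrinking $\delta$ (depending only on $M$ and $N$), every straight segment in $\R^N$ between corresponding values of $f$ and $g$ — in particular those from $f(0)$ to $g(0)$ and from $f(\pi)$ to $g(\pi)$ — lies in the tubular neighbourhoods $M_\delta$ and $N_\delta$ on which $\Pi_M$ and $\Pi_N$ are defined and obey \eqref{nppbdd}--\eqref{nppbdd2}. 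Next I pass to a flat model: for $\p\in(0,R/2]$ the complex logarithm maps the half-annulus $D^+_R\setminus D^+_{R-\p}$ conformally onto the rectangle $[0,L]\times[0,\pi]$ with $L=\log\frac{R}{R-\p}\in(0,\log 2]$, carrying $\l^A_{R-\p}$ to $\{x=0\}$, $\l^A_R$ to $\{x=L\}$, and the two components of $\l^C_R\setminus\l^C_{R-\p}$ to $\{\theta=0\}$ and $\{\theta=\pi\}$; since Dirichlet energy is conformally invariant it suffices to build the comparison map on the rectangle, equal to $f$ on $\{x=0\}$, to $g$ on $\{x=L\}$, and $N$-valued on $\{\theta=0\}\cup\{\theta=\pi\}$. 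With $\phi(x)=1-x/L$, cutoffs $\beta_0(\theta)=1-\theta/\pi$, $\beta_\pi(\theta)=\theta/\pi$, and the $N$-valued radial paths $\gamma_0(x)=\Pi_N\big(\phi(x)f(0)+(1-\phi(x))g(0)\big)$, $\gamma_\pi(x)=\Pi_N\big(\phi(x)f(\pi)+(1-\phi(x))g(\pi)\big)$, set
\begin{align*}
\tilde w(x,\theta)=\;&\phi(x)f(\theta)+(1-\phi(x))g(\theta)\\
&+\beta_0(\theta)\big(\gamma_0(x)-\phi(x)f(0)-(1-\phi(x))g(0)\big)\\
&+\beta_\pi(\theta)\big(\gamma_\pi(x)-\phi(x)f(\pi)-(1-\phi(x))g(\pi)\big),
\end{align*}
and $w=\Pi_M\circ\tilde w$. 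Using $\Pi_N f(0)=f(0)$ (and the analogues at $\pi$) one checks directly that $\tilde w=f$ on $\{x=0\}$, $\tilde w=g$ on $\{x=L\}$, and $\tilde w=\gamma_0\in N$, $\tilde w=\gamma_\pi\in N$ on the two horizontal edges, while everywhere $\tilde w$ stays within $C\delta$ of $M$; therefore $w$ is well defined, retains all four boundary conditions (the horizontal edges being already $N$-valued, so fixed by $\Pi_M$), and satisfies $|\n w|\le 2|\n\tilde w|$ by \eqref{nppbdd2}. Pulling $w$ back to $D^+_R\setminus D^+_{R-\p}$ yields the required candidate in $C^0\cap W^{1,2}$.

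For the energy estimate write $Q=R\int_{\l^A_R}|f'-g'|^2$ and $P=R\int_{\l^A_R}(|f'|^2+|g'|^2)$, so $Q\le 2P$ and $Q\le\delta^2$. In $(x,\theta)$-coordinates the main term $\phi f+(1-\phi)g$ contributes $\l_x$-energy $\lesssim L^{-1}\int_0^\pi|f-g|^2\lesssim L^{-1}Q$ (by the $C^0$-bound) and $\l_\theta$-energy $\lesssim L\int_0^\pi(|f'|^2+|g'|^2)\lesssim LP$; the correction terms have amplitude $|\gamma_0-\phi f(0)-(1-\phi)g(0)|=d(\cdot,N)\le|f(0)-g(0)|\lesssim Q^{1/2}$ and $x$-derivative $\lesssim L^{-1}Q^{1/2}$ (using $\|D\Pi_N\|\le 2$ and $|\phi'|=L^{-1}$), hence contribute $\lesssim LQ+L^{-1}Q\lesssim LP+L^{-1}Q$. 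Thus $\int_{D^+_R\setminus D^+_{R-\p}}|\n w|^2\lesssim L^{-1}Q+LP$; choosing $L=\min\{(Q/P)^{1/2},\log 2\}$, which fixes $\p=R(1-e^{-L})\in(0,R/2]$, gives $\lesssim (PQ)^{1/2}$: when $L=(Q/P)^{1/2}$ the two terms balance, and when $L=\log 2$ (which forces $Q\ge(\log 2)^2P$, so $P$ and $Q$ are comparable) the bound is $\lesssim P\sim(PQ)^{1/2}$. The factor $4$ from $\Pi_M$ is absorbed into a constant $C=C(M,N)$, and the degenerate case $f\equiv g$ is trivial.

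The step I expect to be the main obstacle is designing $\tilde w$ so that it simultaneously interpolates $f$ and $g$ across the collar, takes values in $N$ on $\l^C_R\setminus\l^C_{R-\p}$, and still obeys the sharp $\sqrt{PQ}$ bound. The naive radial interpolation used in the interior case does not respect $N$ at the corners, and a crude correction there would pick up $\l_\theta$-energy; the key point is that the correction's amplitude is governed by $d(\cdot,N)\lesssim|f(0)-g(0)|\lesssim\sqrt Q$ rather than merely by $\delta$, so that after the same optimization in $L$ it adds only an $O(\sqrt{PQ})$ term, and one must also check that the $x$-derivatives of $\gamma_0,\gamma_\pi$ (which involve $D\Pi_N$ composed with the collapsing profile $\phi$) do not spoil that optimization. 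Everything else is a routine transcription of \cite[Lemma 3.14]{CD}, with the interior ball replaced by the upper half-disk and the circle $\l B_R$ by the arc $\l^A_R$.
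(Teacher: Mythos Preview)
The paper does not supply its own proof of this lemma; it simply quotes the statement from \cite[Lemma 6.5]{LAX} and \cite[Lemma 4.2]{PR}. So there is no in-paper argument to compare against, and the relevant question is only whether your proposal is a valid proof.

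It is. Your construction is exactly the expected adaptation of \cite[Lemma 3.14]{CD}: pass to the flat rectangle by the logarithm, take the linear interpolation $\phi f+(1-\phi)g$, then add the two corner corrections $\beta_0(\gamma_0-\text{lin}_0)$ and $\beta_\pi(\gamma_\pi-\text{lin}_\pi)$ built from $\Pi_N$ so that the map is $N$-valued on $\{\theta=0,\pi\}$, and finally project by $\Pi_M$. The boundary checks at $x=0$, $x=L$, $\theta=0$, $\theta=\pi$ are correct (using $\Pi_N f(0)=f(0)$ etc.\ and $\Pi_M|_N=\mathrm{id}$), and the $C^0$-smallness $\|f-g\|_\infty\lesssim\sqrt Q\le\sqrt\pi\,\delta$ keeps $\tilde w$ in the tubular neighbourhood $M_\delta$. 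Your energy bookkeeping is also correct: the main interpolation contributes $\lesssim L^{-1}Q+LP$, the corrections contribute $\lesssim L^{-1}Q+LQ\le L^{-1}Q+2LP$ because their amplitude is $d(\cdot,N)\le|f(0)-g(0)|\lesssim\sqrt Q$ and their $x$-derivative is $\lesssim L^{-1}\sqrt Q$; optimizing $L=\min\{\sqrt{Q/P},\log 2\}$ then yields $\lesssim\sqrt{PQ}$ in both regimes, exactly as you wrote. This is the same mechanism used in the cited references; your identification of the corner correction as the only new ingredient, and the reason it costs no more than $\sqrt{PQ}$, is the point of the free-boundary version.
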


\begin{proof}(of Lemma \ref{5.13})
The case of which $\B_1$ and $\B_2$ contain only interior geodesic balls was addressed in \cite[Lemma 4.8]{minmaxgenus}. So we focus on the case of $\B_1$ and $\B_2$ are both finite collection of geodesic balls which intersect the boundary $\l\S_0$ orthogonally.

Let $\B_1=\{B_{\a}^1\}$ and $\B_2=\{B_{\b}^2\}$. We will divide $\B_2$ into two disjoint subsets $\B_{2,+}$ and $\B_{2,-}$, set
\[\B_{2,+}:=\Big\{B_{\b}^2\in\B_2:\frac{1}{4}B_{\b}^2\subset B_{\a}^1\text{ for some } B_{\a}^1\in\B_1,\text{ or }\frac{1}{4}B_{\b}^2\cap\B_1=\emptyset\Big\},\]
and $\B_{2,-}:=\B_2\setminus\B_{2,+}$. For balls $\frac{1}{4}B_{\b}^2\cap\B_1=\emptyset$, we have
\[\sum_{\{\frac{1}{4}B_{\b}^2\cap\B_1=\emptyset\}}E(u)-E(H(u,\frac{1}{4}B_{\b}^2))\leq E(u)-E(H(u,\B_1,\cup_{\frac{1}{4}B_{\b}^2\cap\B_1=\emptyset}B_{\b}^2)).\]
For $\frac{1}{4}B_{\b}^2\subset B_{\a}^1$, we have
\begin{equation}
    \begin{split}
        \int_{\cup_{\frac{1}{4}B_{\b}^2\subset B_{\a}^1}B^2_{\b}}|\nabla u|^2-&|\nabla H(u,\frac{1}{4}B_{\b}^2)|^2\leq\int_{\cup_{\frac{1}{4}B_{\b}^2\subset B_{\a}^1}B^2_{\b}}|\nabla u|^2-|\nabla H(u,\B_1,\B_{\b}^2)|^2\\
        \leq&\int_{\cup_{\frac{1}{4}B_{\b}^2\subset B_{\a}^1}B^2_{\b}}|\nabla u|^2-|\nabla H(u,\B_1)|^2\\
        &+\int_{\cup_{\frac{1}{4}B_{\b}^2\subset B_{\a}^1}B^2_{\b}}|\nabla H(u,\B_1)|^2-|\nabla H(u,\B_1,B^2_{\b})|^2.
    \end{split}
\end{equation}
By Theorem \ref{energy convexity free boundary} we know that
\begin{equation}
    \begin{split}
\int_{\cup_{\frac{1}{4}B_{\b}^2\subset B_{\a}^1}B^2_{\b}}|\nabla u|^2-|\nabla H(u,\B_1)|^2&\leq\int_{\cup_{\frac{1}{4}B_{\b}^2\subset B_{\a}^1}B^2_{\b}}|\nabla u-\nabla H(u,\B_1)|^2\\
&\leq 4(E(u)-E(H(u,\B_1))).
    \end{split}
\end{equation}
So combining the above estimates together we the following inequality:
\begin{equation}\label{30'}
E(u)-E(H(u,\frac{1}{4}\B_{2,+}))\leq C(E(u)-E(u,\B_1,\B_{2,+})).
\end{equation}
Now we consider $\B_{2,-}$. For $B^2_{\b}\in\B_{2,-}$ we know that $\frac{1}{4}B^2_{\b}\cap B^1_{\a}\neq\emptyset$ for some $B^1_{\a}\in\B_1$, but We can pull back $B^2_{\b}$ to the upper Poincar\'{e} disk with its center at the origin and denote it by $D^+_{r_B^0}$, where $r_B^0$ is the radius w.r.t. $ds^2_0$. Now let us construct an auxiliary comparison map. Using co-area formula, there exists some $r\in[\frac{3}{4}r_B^0,r_B^0]$ with
\[\int_{\l^A_r}|\nabla u-\nabla u_1|^2\leq\frac{9}{r_B^0}\int^{r^0_B}_{3r^0_B/4}(\int_{\l^A_s}|\nabla u-\nabla u_1|^2)ds\leq\frac{9}{r}\int_{D^+_{r_B^0}}|\nabla u-\nabla u_1|^2,\]
\[\int_{\l^A_r}(|\nabla u_1|^2+|\nabla u|^2)\leq\frac{9}{r_B^0}\int^{r^0_B}_{3r^0_B/4}(\int_{\l^A_s}|\nabla u_1|^2+|\nabla u|^2)ds\leq\frac{9}{r}\int_{D^+_{r_B^0}}|\nabla u|^2+|\nabla u_1|^2.\]
where $u_1=H(u,\B_1)$. We apply Lemma \ref{comparison2} to get $\p\in(0,r/2]$ and $w:D^+_{r}\setminus D^+_{r-\p}\to M$ with $w(r,\theta)=H(u,\B_1)(r,\theta)$ and $w(r-\p,\theta)=u(r,\theta)$ such that
\begin{equation}\label{30}
\begin{split}
\int_{D^+_r\setminus D^+_{r-\p}}|\nabla w|^2\leq& C(r\int_{\l^A _r}|\nabla u|^2+|\nabla u_1|^2)^{\frac{1}{2}}(r\int_{\l ^A_{r
}}|\nabla(u-u_1)|^2)^{\frac{1}{2}}\\   
\leq& C(\int_{D^+_{r_B^0}}|\nabla u|^2+|\nabla u_1|^2)^{\frac{1}{2}}(\int_{D^+_{r_B^0}}|\nabla u-\nabla u_1|^2)^{\frac{1}{2}}.
\end{split}
\end{equation}
The map $x\mapsto H(u,D_r^+)(rx/(r-\p))$ maps $D^+_{r-\p}$ to $M$ and agrees with $w$ on $\l^A_{r-\p}$ (the rescaling here is done w.r.t. the flat metric). So we get the following map
\[
v = \begin{cases} H(u,\B_1), & \text{on } D^+_{r_B^0}\setminus D_r^+ \\
w, & \text{on }D_r^+\setminus D_{r-\p}^+  \\
H(u,D^+_r)(\frac{rx}{r-\p})&\text{on }D^+_{r-\p}
\end{cases}
\]
This new map $v$ gives an upper bound for the energy of $H(u,\B_1,D^+_{r_B^0})$:
\[
\begin{split}
    \int_{D^+_{r_B^0}}|\nabla H(u,\B_1,D^+_{r_B^0})|^2\leq&\int_{D^+_{r_B^0}}|\nabla v|^2\\
    =&\int_{D^+_{r_B^0}\setminus D^+_r}|\nabla H(u,\B_1)|^2+\int_{D^+_r\setminus D^+_{r-\p}}|\nabla w|^2\\
    &+\int_{D^+_r}|\nabla H(u,D^+_r)|^2.
\end{split}
\]
Since $\frac{1}{4}D^+_{r_B^0}\subset D^+_{\frac{1}{2}r_B^0}\subset D^+_r$, combining with the above inequality, we have the following
\[
\begin{split}
\int_{\frac{1}{4}D^+_{r_B^0}}|\nabla u|^2-\int_{\frac{1}{4}D^+_{r_B^0}}|\nabla &H(u,\frac{1}{4}D^+_{r_B^0})|^2\leq\int_{D^+_r}|\nabla u|^2-\int_{D^+_r}|\nabla H(u,D^+_r)|^2\\    
\leq&\int_{D^+_r}|\nabla u|^2-\int_{D^+_{r_B^0}}|\nabla H(u,\B_1,D^+_{r_B^0})|^2\\
&+\int_{D^+_{r_B^0}\setminus D^+_r}|\nabla H(u,\B_1)|^2+\int_{D^+_r\setminus D^+_{r-\p}}|\nabla w|^2\\
\leq&\int_{D^+_r}|\nabla u|^2-\int_{D^+_{r_B^0}}|\nabla H(u,\B_1,D^+_{r_B^0})|^2+\int_{D^+_r\setminus D^+_{r-\p}}|\nabla w|^2\\
&+\int_{D^+_{r_B^0}}|\nabla H(u,\B_1)|^2-\int_{D^+_r}|\nabla H(u,\B_1)|^2.\\
\end{split}
\]
Combining with \eqref{30} we have
\[
\begin{split}
\int_{\frac{1}{4}D^+_{r_B^0}}|\nabla u|^2-\int_{\frac{1}{4}D^+_{r_B^0}}|\nabla H(u,\frac{1}{4}D^+_{r_B^0})|^2
\leq&\int_{D^+_r}|\nabla u|^2-\int_{D^+_r}|\nabla H(u,\B_1)|^2\\
+&\int_{D^+_{r_B^0}}|\nabla H(u,\B_1)|^2-\int_{D^+_{r_B^0}}|\nabla H(u,\B_1,D^+_{r_B^0})|^2\\
+&C(\int_{D^+_{r_B^0}}|\nabla u|^2+|\nabla u_1|^2)^{\frac{1}{2}}(\int_{D^+_{r_B^0}}|\nabla u-\nabla u_1|^2)^{\frac{1}{2}}.
\end{split}
\]
With Theorem \ref{energy convexity free boundary}, summing the above inequality over $\B_{2,-}$, and the assumption that all the maps have energy less than $\e_1>0$, we can get the following inequality
\[E(u)-E(H(u,\frac{1}{4}B_{2,-}))\leq C'(E(u)-E(H(u,\B_1,\B_2)))^{\frac{1}{2}}.\]
With \eqref{30'} we get the inequality \eqref{24}.

As for the inequality \eqref{25}, we divide $\B_2$ into two disjoint sub-collection $\B_{2,+}$ and $\B_{2,-}$, with
\[\B_{2,+}:=\Big\{B_{\b}^2\in\B_2:\mu B_{\b}^2\subset B_{\a}^1\text{ for some } B_{\a}^1\in\B_1,\text{ or }\mu B_{\b}^2\cap\B_1=\emptyset\Big\},\]
and $\B_{2,-}=\B_{2}\setminus\B_{2,+}$. For $\mu B_{\b}^2\subset B_{\a}^1$ we have $H(u,\B_1,\mu B_{\b}^2)=H(u,\B_1)$, so we need not consider $\mu B_{\b}^2\subset B_{\a}^1$. For $\mu B_{\b}^2\cap\B_1=\emptyset$ we have 
\[
\begin{split}
E(H(u,\B_1))-E(H(u,\B_1,&\mu B^2_{\b}))\\
=E(u)-E(H(u,\mu B^2_{\b}))&\leq E(u)-E(H(u,4\mu B^2_{\b})).   
\end{split}
\]
Thus
\[E(H(u,\B_1))-E(H(u,\B_1,\mu \B_{2,+}))\leq E(u)-E(H(u,4\mu B_{2,+})).\]
\end{proof}
For collection $\B_{2,-}$, we use similar argument. Here we identify $4\mu B_{\b}^2$ with a half disk centered at the origin of the upper half Poincar\'{e} disk, and denote by $D^+_{r_B^0}$. In the construction of $w$, we change the role of $u$ and $H(u,\B_1)$. Let the comparison map be the following
\[
v = \begin{cases} u, & \text{on } D^+_{r_B^0}\setminus D_r^+ \\
w, & \text{on }D_r^+\setminus D_{r-\p}^+  \\
H(u,\B_1,D^+_r)(\frac{rx}{r-\p})&\text{on }D^+_{r-\p}
\end{cases}
\]
The comparison map $v$ provides an upper bound for the energy of $H(u,D^+_{r_B^0})$ on $D^+_{r_B^0}$. Since $\mu B^2_{\b}=\frac{1}{4}D^+_{r_B^0}\subset D^+_{r_B^0}$, by similar argument we can get
\[E(H(u,\B_1))-E(H(u,\B_1,\mu\B_{2,-}))\leq E(u)-E(H(u,4\mu\B_{2,-}))+C(E(u)-E(H(u,\B_1)))^{\frac{1}{2}}.\]
Combining results on $\B_{2,+}$ and $\B_{2,-}$ we ge the inequality \eqref{25}.
\subsection{Construction of the deformation map}
Given $(\r(t),\s(t))\in\tilde{\O}$. Recall that for each $\s(t)$ there exists a normalized Fuchsian group $\G_{\s(t)}$ of its double $\S^d_{\s(t)}$, and we denote the quotient map by 
\[\pi_{\s(t)}:\H\to\H/\G_{\s(t)}=\S^d_{\s(t)}.\]
We denote the injective radius of $\S^d_{\s(t)}$ by $r_{\s(t)}$. Fix a time parameter $t\in(0,1)$. Suppose that $B$ is a geodesic ball on $\S_{\s(t)}$, can be either an interior ball or the geodesic ball that intersects $\l\S_{\s(t)}$ orthogonally, with its radius $r_B$ less than the injective radius of the center of $B$ on $\S^d_{\s(t)}$. As discussed in Claim \ref{invariant of boundary}, we can fix a simply connected component $\H'$ of $\H/\pi^{-1}_{\s(t)}(\l\S_{\s(t)})$, which is invariant of all $t\in[0,1]$, and view $\r(t)$ as being lifed up to $\H'$ by $\pi^{-1}_{\s(t)}$ restricted on $\S_{\s(t)}$. We pick one connected component of the pre-image $\pi^{-1}_{\s(t)}(B)$, and denote the connected componenet still by $B$. 
\begin{description}
\item[1]{$B$ is an interior geodesic ball of $\S_{\s(t)}$} \\
$B$ has radius $r_B$ w.r.t. the hyperbolic metric $ds^2_{-1}$ of $\H'\subset\H$. Moreover, $B$ is a standard ball in $\H'$ w.r.t. the flat metric $ds_0^2$. By the continuity of $\s:[0,1]\to\T(\S_0)$, for parameter $s$ sufficiently close to $t$, the image of $B\in\H'$ under $\pi_{\s(s)}:\H'\to\S_{\s(s)}$ is also a geodisic ball with radius less than the injective radius of the center of that ball on $\S_{\s(s)}$.  
\item[2]{$B$ is a geodesic ball that intersects $\l\S_{\s(t)}$ orthogonally} \\
Since $\l\S_{\s(t)}$ is a geodesic on $\S^d_{\s(t)}$ and by Claim \ref{invariant of boundary} $\pi^{-1}_{\s(t)}(\l\S_{\s(t)})$ is fixed $\forall t\in[0,1]$, we can assume without loss of generality that the imaginary axis is one of the connected components of the pre-image $(i\R)\cap\H\in\pi^{-1}_{\s(t)}(\l\S_{\s(t)}),$ for all $t\in[0,1]$. So we can also assume one of the connected components of the pre-image $\pi^{-1}_{\s(t)}(B)$ is the geodesic ball centered on $(i\R)\cap\H$ and intersects $(i\R)\cap\H$ orthogonally. The continuity of $\s:[0,1]\to\T(\S_0)$ implies the continuity of $\s\mapsto \S^d_{\s(t)}$. By the continuity of $\s\mapsto \S^d_{\s(t)}$ and that the pre-image of the boundary is fixed for all $t\in[0,1]$, we can choose the parameter $s$ sufficiently close to $t$ such that the image of $B$ under $\pi_{\s(s)}:\H'\to\S_{\s(s)}$ is also a geodesic ball with center on the boundary $\l\S_{\s(s)}$, and with radius less than the injective radius of center of that ball on $\S^d_{\s(t)}$.
\end{description}
By the above discussion we know that for each $t\in[0,1]$, there exists $\delta_t>0$, so that $\forall|s-t|<\delta_t$,  $\pi_{\s(s)}(\pi^{-1}_{\s(t)}(B))$ remains a geodesic ball of $\S(\s(s))$ and has its radius bounded by the injective radius of the center of that ball on $\S^d_{\s(s)}$. 

Now we discuss free boundary harmonic replacement of $(\r(t),\s(t))\in\tilde{\O}$ on the geodesic ball $B$. We can choose a smooth cutoff function $\mu:[0,1]\to[0,1]$ such that $\mu(s)=1$ for $|s-t|<\delta_t$ and $\mu(s)=0$ for $|s-t|>\delta$. If we do harmonic replacement of $\r(s):\H'\to M$ on the geodesic ball $\mu(s)B\subset\H'$, Corollary \ref{8} implies that it is also a continuous sweepout in $C^0(
\bar{\S}_{\s(t)},M)\cap W^{1,2}(\S_{\s(t)},M)$, and $\r|_{\l\S_{\s(t)}}\subset N$, i.e. $(\r(t),\s(t))$ with free boundary harmonic replacement on $\mu B\subset\H'$ is still a continuous sweepout in $\tilde{\O}$, which lies in the same homotopy class as $(\r,\s)$.

Given $(\r,\s)\in\tilde{\O}$, we define the maximal improvement for free boundary harmonic replacement on families of disjoint geodesic balls with energy at most $\e$ by
\[e_{\e,\r(t)}=\sup_{\B}\{E(\r(t),\s(t))-E(H(\r(t),\frac{1}{4}\B),\s(t))\},\]
where $\B$ are chosen as any finite collection of disjoint geodesic balls on $\S_{\s(t)}$, including both interior geodesic ball and geodesic ball which intersects the boundary $\S_0$ orthogonally, with the radius of each ball less than the injective radius of the center of the ball on $\S^d_0$ and $\r_0$ as in \eqref{rho}.
\begin{lemma}\label{5.17}
$\forall t\in[0,1]$, if $\r(t)$ isn't harmonic, there exists a neighborhood $I^t\subset(0,1)$ of $t$ depending on $t,\e$ and $\r$ such that $\forall s\in 2I^t$ we have
\[e_{\frac{1}{2}\e,\r(s)}\leq 2e_{\e,\r(t)}.\]
\end{lemma}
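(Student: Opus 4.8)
The plan is to argue by contradiction, transporting nearly optimal collections of balls from a nearby slice $\S_{\s(s)}$ back to $\S_{\s(t)}$ and bounding the resulting error by the continuity results of Section \ref{replacement}. First note that since $\r(t)$ is not harmonic it fails to be weakly harmonic on some small geodesic ball $B$ with radius $\le r_0$ (see \eqref{rho}) and $E(\r(t),B)\le\e$; then the free boundary harmonic replacement of $\r(t)$ on $\frac14 B$ is not $\r(t)|_{\frac14 B}$, so by Theorem \ref{energy convexity}/Theorem \ref{energy convexity free boundary},
\[
E(\r(t),\tfrac14 B)-E\big(H(\r(t),\tfrac14 B)\big)\ \ge\ \tfrac12\int_{\frac14 B}\big|\n\r(t)-\n H(\r(t),\tfrac14 B)\big|^2\ >\ 0,
\]
hence $e_{\e,\r(t)}>0$. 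Now suppose the assertion fails on every neighborhood of $t$: there are $s_j\to t$ in $(0,1)$ and finite collections $\B_j$ of disjoint geodesic balls on $\S_{\s(s_j)}$, all of radius $\le r_0$, with $E(\r(s_j),\B_j)\le\frac12\e$ and
\[
E(\r(s_j),\s(s_j))-E\big(H(\r(s_j),\tfrac14\B_j),\s(s_j)\big)\ >\ 2\,e_{\e,\r(t)}.
\]

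Next I would transport $\B_j$ to $\S_{\s(t)}$. Lift to the component $\H'\subset\H$, which is fixed in $t$ by Claim \ref{invariant of boundary}; there each ball of $\B_j$ is a Euclidean disk, or an upper half disk centered on the fixed set $\pi_{\s(t)}^{-1}(\l\S_{\s(t)})$. By the continuity of $\s:[0,1]\to\T(\S_0)$ the groups $\G'_{\s(s_j)}$ and the hyperbolic metrics converge, on the relevant compact part of $\H'$, to those of $\s(t)$, so for $j$ large these same disks project under $\pi_{\s(t)}$ to a collection $\tilde\B_j$ of disjoint geodesic balls on $\S_{\s(t)}$ (after shrinking each radius by a factor tending to $1$, which is harmless since all radii are $\le r_0$). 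Continuity of $\r$ in $C^0\cap W^{1,2}$ — after pulling back to $\S_0$ and lifting — gives $E(\r(t),\tilde\B_j)\le\frac12\e+o(1)\le\e$ for $j$ large, so $\tilde\B_j$ is admissible in the definition of $e_{\e,\r(t)}$, whence
\[
E(\r(t),\s(t))-E\big(H(\r(t),\tfrac14\tilde\B_j),\s(t)\big)\ \le\ e_{\e,\r(t)}.
\]

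It then remains to show that the improvement at $t$ on $\frac14\tilde\B_j$ exceeds that at $s_j$ on $\frac14\B_j$ by at most $o(1)$. Writing each improvement as $E(\r(\cdot))-E(H(\r(\cdot),\frac14\,\cdot))$, the difference equals $[E(\r(s_j))-E(\r(t))]+[E(H(\r(t),\frac14\tilde\B_j))-E(H(\r(s_j),\frac14\B_j))]$; the first bracket is $o(1)$ by continuity of $\r$. For the second bracket I would interpolate through $H(\r(t),\frac14\B_j)$: passing from the balls $\frac14\B_j$ to $\frac14\tilde\B_j$ is a perturbation with rescaling factors $\to 1$, controlled ball by ball by Corollary \ref{8} (in the hyperbolic metric, via the Remark following it); passing from $\r(t)$ to $\r(s_j)$ on the fixed balls $\frac14\B_j$ is controlled by Corollary \ref{7} and estimate \eqref{continuity of harmonic replacement}. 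Hence the improvement at $t$ on $\frac14\tilde\B_j$ is $>2e_{\e,\r(t)}-o(1)$, which together with the previous display forces $e_{\e,\r(t)}\ge 2e_{\e,\r(t)}-o(1)$, i.e.\ $e_{\e,\r(t)}\le o(1)$, contradicting $e_{\e,\r(t)}>0$. This yields $\delta_0>0$ with $e_{\frac12\e,\r(s)}\le 2e_{\e,\r(t)}$ whenever $|s-t|<2\delta_0$, and one sets $I^t:=(t-\delta_0,t+\delta_0)\cap(0,1)$.

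I expect the main obstacle to be exactly this last comparison. The collections $\B_j$ are finite, but their cardinality may grow with $j$, so the per-ball estimates of Corollaries \ref{7} and \ref{8} cannot simply be summed against the \emph{global} $C^0$- and $W^{1,2}$-discrepancies of $\r(s_j)$ and $\r(t)$. The resolution is that the only quantities in those estimates controlled merely per ball — the energies $E(\r,\frac14 B)$ and the boundary data on $\l(\frac14 B)$ — can nonetheless be summed using that the balls are pairwise disjoint (for instance $\sum_B\int_{\frac14 B}|\n(\r(s_j)-\r(t))|^2\le\|\n(\r(s_j)-\r(t))\|_{L^2}^2$ and $\sum_B E(\r(t),\frac14 B)\le E(\r(t))$), so that, after choosing good radii for the annular comparison regions by the co-area formula exactly as in the proof of Lemma \ref{5.13}, the cumulative error is $o(1)$ uniformly in the collection. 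This bookkeeping, not a new idea, is where the work lies.
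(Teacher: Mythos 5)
Your proposal follows essentially the same strategy as the paper's proof: both transport a nearly optimal collection of balls from $\S_{\s(s)}$ back to $\S_{\s(t)}$ via the common lift to $\H'$ (justified by Claim~\ref{invariant of boundary} and continuity of $\s$), invoke admissibility of the transported collection $\tilde\B_j$ to bound its improvement by $e_{\e,\r(t)}$, and close via continuity of $\r$ and of the harmonic replacement operator (Corollary~\ref{8}). Your contradiction framing ($s_j\to t$) is a cosmetic variant of the paper's direct choice of a neighborhood $I^t$, and you are more explicit than the paper about the need to sum the per-ball comparison estimates over collections whose cardinality may grow — a point the paper passes over silently — but, as you indicate, the sum does close once one uses the comparison-map estimates (Lemma~\ref{comparison1}/\ref{comparison2} with co-area) together with disjointness of the balls and Cauchy--Schwarz, rather than naively summing the Lipschitz bound \eqref{continuity of harmonic replacement}.
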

\begin{proof}
By the above discussion, the continuity of $\s:[0,1]\to\T(\S_0)$ implies that there exists $\delta_t>0$, such that $\forall|s-t|<\delta_t$,  $\pi_{\s(s)}(\pi^{-1}_{\s(t)}(B))$ remains a geodesic ball of $\S(\s(s))$ and has its radius bounded by the injective radius of the center of that ball on $\S^d_{\s(s)}$. Since $\r(t)$ isn't harmonic, we have $e_{\e,\r(t)}>0$. The continuity of $r(s)$ w.r.t. $s$ implies that there exists a neighborhood $\tilde{I}^t\subset(t-\delta_t,t+\delta_t)$ of $t$ such that for any finite collection of geodesic balls $\B\subset K\subset\H'$, where $K$ is a fixed compact subset in $\H'$, we have the following inequality
\begin{equation}\label{32}
    \frac{1}{2}\int_{\B}|\nabla\r(s)-\nabla\r(t)|^2\leq\min\{\frac{1}{4}e_{\e,\r(t)},\frac{1}{2}\e\},
\end{equation}
where we view $\r(s)$ as being lifted up to $\H'$ by the quotient map $\pi_{\s(s)}:\H\to\S^d_{\s(s)}$ restricted on $\S_{\s(s)}$.

Now we fix $s\in 2\tilde{I}^t$, let $\B_s$ be any finite collection of disjoint goedesic balls $\B_s\subset\S_{\s(s)}$ such that $E(\r(s),\B_s)\leq\frac{1}{2}\e$ and 
\[E(\r(s))-E(H(\r(s),\frac{1}{4}\B_s))\geq\frac{3}{4}e_{\frac{1}{2}\e,\r(s)}.\]
By taking the compact set $K$ sufficiently large we can find a connected componenet of the pre-image in $K$ for each ball in $\B_s$. Then take the image of $\pi_{\s(s)}^{-1}(\B_s)$ under the quotient map $\pi_{\r(t)}:
H\to\S^d_{\s(t)}$, we get another collection of geodesic balls on $S_{\s(t)}$, which we denote by $\B_t$. Since $\B_t$ and $\B_s$ share the same pre-image on $K\subset\H'$, by \eqref{32} we have 
\[E(\r(t),\B_t)\leq E(\r(s),\B_s)+\frac{1}{2}\e\leq\e.\]
Hence $E(\r(t))-E(H(\r(t),\frac{1}{4}\B_t))\leq e_{\e,\r(t)}$.
\begin{equation}
    \begin{split}
        E(\r(s))&-E(H(\r(s),\frac{1}{4}\B_s))\\
    &\leq|E(\r(s))-E(\r(t))|+E(\r(t))-E(H(\r(t),\frac{1}{4}\B_t))\\
    &+|E(H(\r(t),\frac{1}{4}\B_t))-E(H(\r(s),\frac{1}{4}\B_s))|.
    \end{split}
\end{equation}
Corollary \ref{8} implies the continuity of harmonic replacement map when the domain  changes continuously w.r.t. $t$. We can choose a neighborhood $I^t\subset\tilde{I}^t$ such that
\[|E(\r(s))-E(\r(t))|\leq\frac{1}{4}e_{\e,\r(t)},\]
and
\[|E(H(\r(t),\frac{1}{4}\B_t))-E(H(\r(s),\frac{1}{4}\B_s))|\leq\frac{1}{4}e_{\e,\r(t)}.\]
Hence $E(\r(s))-E(H(\r(s),\frac{1}{4}\B_s))\leq 2e_{\e,\r(t)}$. This implies that $e_{\frac{1}{2}\e,\r(s)}\leq 2e_{\e,\r(t)}.$ 
\end{proof}

\begin{lemma}\label{5.18}
There exists a covering $\{I^{t_j}:j=1,...,m\}$ for the parameter space $[0,1]$, and $m$ collections of geodesic balls $\B_{j}\subset\S_{\s(t_j)}$, $j=1,...,m$, where the balls in each collection $B_j$ are pairwise disjoint with the radius of each ball less than the injective radius of the center of that ball on $\S^d_{\s(t_j)}$ and $r_0$ by \eqref{rho}, together with $m$ continuous functions $\mu_j:[0,1]\to[0,1]$, $j=1,...,m$, satisfying:
\begin{itemize}
    \item Each $\mu_j$ is supported in $2I^{t_j}$;
    \item For a fixed $t$, at most two $\mu_j(t)$ are positive, and $E(\r(t),\mu_j(t)\B_j)\leq\frac{1}{3}\e_1$;
    \item If $t\in[0,1]$, such that $E(\r(t),\s(t))\geq W_E(\tilde{\O}_{\r,\s})$, there exists a $j$ such that 
    \[E(\r(t))-E(H(\r(t),\frac{1}{4}\mu_j(t)\B_j))\geq\frac{1}{8}e_{\frac{1}{8}\e_1,\r(t)}\].
\end{itemize}
\end{lemma}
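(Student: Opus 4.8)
The plan is to follow the scheme of the analogous covering lemma in \cite{CD} (Section 3.3) and \cite{minmaxgenus}: first construct, for each time $t$, a neighborhood together with a nearly optimal family of replacement balls; then propagate that family to nearby times using Lemma \ref{5.17} and the continuity statements of Section \ref{replacement}; and finally extract a finite cover of $[0,1]$ by a one-dimensional compactness argument and build the $\mu_j$ by hand.

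\emph{Pointwise construction.} Fix $t\in[0,1]$. Since $\pi^{-1}_{\s(t)}(\l\S_{\s(t)})$ is the same subset of $\H$ for all $t$ and the metric on the fixed component $\H'$ is always the standard hyperbolic one, a finite collection of admissible geodesic balls on $\S_{\s(t)}$ lifts to a fixed collection of (half-)balls in $\H'$ which, for $s$ close to $t$, reprojects under $\pi_{\s(s)}$ to an admissible collection $\B_s$ on $\S_{\s(s)}$ (by continuity of $\s$, the bound of the radii against the injectivity radius of $\S^d_{\s(s)}$ persists). If $\r(t)$ is not harmonic, then $e_{\frac14\e_1,\r(t)}>0$, because by uniqueness in the energy convexity (Theorem \ref{energy convexity free boundary}) a small-energy free boundary harmonic replacement returns the map only when the map was already harmonic on that ball. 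Choose a finite family $\B(t)$ of pairwise disjoint admissible geodesic balls with $E(\r(t),\B(t))\le\frac14\e_1$ and
\[ E(\r(t),\s(t))-E\big(H(\r(t),\tfrac14\B(t)),\s(t)\big)\ \ge\ \tfrac34\,e_{\frac14\e_1,\r(t)}. \]
By Lemma \ref{5.17} there is a neighborhood of $t$ whose double $2I^{t}$ satisfies $e_{\frac18\e_1,\r(s)}\le 2\,e_{\frac14\e_1,\r(t)}$ there; shrinking $I^{t}$ and using the $C^0\cap W^{1,2}$-continuity of $s\mapsto\r(s)$ on compacta of $\H'$ (built into the definition of $\tilde\O$) together with Corollary \ref{8} (continuity of the harmonic replacement map), I may further arrange that for all $s\in 2I^{t}$
\[ E(\r(s),\B_s)\le\tfrac13\e_1 \quad\text{and}\quad E(\r(s),\s(s))-E\big(H(\r(s),\tfrac14\B_s),\s(s)\big)\ \ge\ \tfrac12\,e_{\frac14\e_1,\r(t)}\ \ge\ \tfrac18\,e_{\frac18\e_1,\r(s)}. \]
If instead $\r(t)$ is harmonic, I take $I^{t}$ small and let $\B(t)$ consist of a single admissible geodesic ball with radius so small that $E(\r(s),\B_s)\le\frac13\e_1$ for all $s\in 2I^{t}$; no improvement is needed at such times.

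\emph{Compactness and the functions $\mu_j$.} The set of times at which $\r(\cdot)$ is not harmonic is open (limits of small-energy harmonic maps are harmonic), so its complement is compact; I cover the complement first by finitely many harmonic-type neighborhoods, and the remaining compact set — which lies inside the open set of non-harmonic times — by finitely many non-harmonic-type neighborhoods, and then pass to a finite subcover of $[0,1]$. Because $[0,1]$ is one-dimensional I may shrink and relabel the base points as $t_1,\dots,t_m$ so that the $I^{t_j}$ still cover $[0,1]$ while the $2I^{t_j}$ cover $[0,1]$ with multiplicity at most two, and so that every $t$ with $E(\r(t),\s(t))\ge W_E(\tilde\O_{\r,\s})$ and $\r(t)$ not harmonic lies in some $I^{t_j}$ with $\r(t_j)$ not harmonic. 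Put $\B_j:=\B(t_j)$ and let $\mu_j:[0,1]\to[0,1]$ be continuous with $\mu_j\equiv 1$ on $I^{t_j}$ and $\operatorname{supp}\mu_j\subset 2I^{t_j}$.

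\emph{Verification and the main obstacle.} The first two bullets are immediate: the support condition holds, the multiplicity-two bound gives that at most two $\mu_j(t)$ are positive, and since $\mu_j(t)\B_j$ is contained in the reprojection of $\B_j$ to $\S_{\s(t)}$ when $t\in 2I^{t_j}$ and is empty otherwise, $E(\r(t),\mu_j(t)\B_j)\le\frac13\e_1$ always. For the third bullet, take $t$ with $E(\r(t),\s(t))\ge W_E(\tilde\O_{\r,\s})$: if $\r(t)$ is harmonic then $e_{\frac18\e_1,\r(t)}=0$ (again uniqueness in Theorem \ref{energy convexity free boundary}), and any $j$ with $\mu_j(t)>0$ — which exists since the $I^{t_j}$ cover — works; if $\r(t)$ is not harmonic then $t\in I^{t_j}$ for some $j$ with $\r(t_j)$ not harmonic, so $\mu_j(t)=1$ and the last displayed inequality with $s=t$ gives exactly the required bound. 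I expect the main difficulty to be the propagation step in the pointwise construction — transferring a nearly optimal ball family from $t$ to nearby times on the conformally inequivalent surfaces $\S_{\s(s)}$ while losing only the prescribed constants — which forces one to combine the factor-$2$ loss of Lemma \ref{5.17}, the uniformity over sub-families of the $C^0\cap W^{1,2}$-continuity of $s\mapsto\r(s)$ on compacta of $\H'$, and the continuity of harmonic replacement from Corollary \ref{8}, and to match the constants $\tfrac34\mapsto\tfrac12\mapsto\tfrac18$ against the at-most-two overlap of the $\mu_j$; the other point needing care is arranging the finite cover so that the non-harmonic slices with energy $\ge W_E(\tilde\O_{\r,\s})$ sit inside non-harmonic-based neighborhoods.
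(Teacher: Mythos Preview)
Your proposal is correct and follows essentially the same route as the paper's proof: both construct, for each $t$, a nearly-optimal ball family $\B_t$ achieving at least half (the paper) or three-quarters (you) of $e_{\frac14\e_1,\r(t)}$, invoke Lemma \ref{5.17} to carry this to the doubled neighborhood with the factor-two loss to $e_{\frac18\e_1,\r(s)}$, use the $C^0\cap W^{1,2}$-continuity of $\r$ together with Corollary \ref{8} to shrink the neighborhood so that both the $\tfrac13\e_1$-energy bound and the improvement inequality persist for nearby $s$, extract a finite subcover by compactness, thin it to multiplicity at most two by one-dimensionality, and build the $\mu_j$ as bump functions.

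The only structural difference is in which compact set is covered and how harmonic slices are handled. The paper restricts to the compact set $I=\{t:E(\r(t),\s(t))\ge \tfrac12 W_E\}$ and implicitly uses the hypothesis of Lemma \ref{deformation lemma} that no slice in $I$ is harmonic (so $e_{\frac14\e_1,\r(t)}>0$ everywhere on $I$). You instead cover all of $[0,1]$ and treat harmonic times separately with a single tiny ball. Your added bookkeeping --- arranging that every non-harmonic $t$ with $E(\r(t),\s(t))\ge W_E$ lies in some non-harmonic-based $I^{t_j}$ --- is a genuine constraint in full generality: if a non-constant harmonic slice with energy $\ge W_E$ existed, nearby non-harmonic times might only be reachable by harmonic-based intervals, since the size of a non-harmonic $I^{t}$ degenerates as $e_{\frac14\e_1,\r(t)}\to 0$ near such a time, and finitely many of them cannot cover that accumulation. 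This is exactly why the paper confines itself to $I$ and leans on the standing hypothesis rather than attempt the general case; under that hypothesis your arrangement is vacuously satisfied and the two arguments coincide. The remaining constant-chasing ($\tfrac34\to\tfrac12\to\tfrac18$ in your version, $\tfrac12\to\tfrac14\to\tfrac18$ in the paper's) and the propagation step you flag as the main obstacle are handled identically.
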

\begin{proof}
Since the energy of $(\r(t),\s(t))$ is continuous w.r.t. $t$, the following set
\[I=\big\{t\in[0,1]|E(\r(t),\s(t))\geq W_E(\tilde{\O}_{\r,\s})/2\big\}\] 
is compact. For each $t$, we choose a finite collection of disjoint geodesic balls $\B_t$ with the radius of each ball less than the injective radius of the center of that ball on $\S^d_{\s(t)}$ and $r_0$ by \eqref{rho} with $\frac{1}{2}\int_{\B_t}|\nabla\r|^2\leq\frac{1}{4}\e_1$, and 
\[E(\r(t))-E(H(\r(t),\frac{1}{4}\B_t))\geq\frac{e_{\e_1/4,\r(t)}}{2}>0.\]
Lemma \ref{5.17} gives an open interval $I^t$ containing $t$ such that for all $s\in 2I^t$, 
\[e_{\frac{\e_1}{8},\r(s)}\leq 2e_{\frac{\e_1}{4},\r(t)},\]
Moreover, we have $\pi_{\s(s)}\circ\pi^{-1}_{\s(t)}(\B_t)=\B_s$ is again a finite collection of disjoint geodesic balls with the radius of each ball less than the injective radius of the center of that ball on $\S^d_{\s(s)}$ and $r_0$ by \eqref{rho}. Using the continuity of $\r$ in $C^0\cap W^{1,2}$ and Corollary \ref{8} we can shrink $I^t$ so that $\r(s)$ has energy at most $\frac{\e_1}{3}$ for $s\in 2I^t$ and, in adition,
\begin{equation}
    \begin{split}
       \Big| E(\r(s))-&E(\r(s),\frac{1}{4}\B_s)\\
       &E(\r(t))-E(\r(t),\frac{1}{4}\B_t)
       \Big|\leq\frac{e_{\frac{1}{4}\e_1,\r}}{4}. 
    \end{split}
\end{equation}
Since $I$ is compact, we can cover $I$ by finitely many $I^t$, say $\{I^{t_j},j=1,...,m\}$. Moreover, after discarding some of the intervals we can rearrange that each $t\in[0,1]$ is in at least one interval $\bar{I}^{t_j}$, and each $\bar{I}^{t_j}$ intersects at most two other $\bar{I}^{t_{j-1}}$ $\bar{I}^{t_{j+1}}$, and $I^{t_{j-1}}$ and $I^{t_{j+1}}$
doesn't intersect each other. Now for each $j=1,...,m$, we choose a continuous function $\mu_j:[0,1]\to[0,1]$ so that $\mu_j=1$ on $I^{t_j}$ and $\mu_j=0$ for $t\notin 2I^{t_j}\cap(I^{t_{j-1}}\cup I^{t_j}\cup I^{t_{j+1}})$.

The Lemma is proved from the estimates in the proof.
\end{proof}

\begin{lemma}\label{deformation lemma}
There is a constant $\e_0>0$ and a continuous function $\Psi:[0,\infty)\to[0,\infty)$ with $\Psi(0)=0$, such that for any $(\tilde{\r}(t),\s(t))\in\tilde{\O}$ with 
\[\max_{t\in[0,1]}E(\tilde{\r}(t),\s(t))\geq W_E(\O_{(\tilde{r},\s)})/2.\]
If $(\tilde{\r},\s)$ isn't harmonic unless $(\tilde{\r},\s)$ is a constant map, then we can perturb $(\tilde{\r},\s)$ to  $(\r(t),\s(t))\in\O_{(\tilde{\r},\s)}$ such that $E(\r(t),\s(t))\leq E(\tilde{\r}(t),\s(t))$.  Moreover, for any finite collection $\B$ of disjoint geodesic balls on $\S_{\s(t)}$ with the geodesic radius of each ball bounded above by $r_0$ (see \eqref{rho}) and the injective radius of the center of that ball on $\S^d_{\s(t)}$, and 
\[\int_{\cup_{\B}B}|\nabla\r|^2\leq\e_0,\] 
we have the following property:
\begin{enumerate}
    \item $B\in\B$ is an interior geodesic ball on $\S_{\s(t)}$, let $v$ be an energy minimizing map with the same boundary value as $\r(t)$ on $\frac{1}{64}B$. 
    \item $B\in\B$ is a geodesic ball which intersects $\l\S_{\s(t)}$ orthogonally, say $B=D_r^+$, let $v$ be a free boundary energy minimizing map with $v=\r$ on $\l_{r/64}^A$ and $v\subset N$ on $\l_{r/64}^C$.
\end{enumerate}
Then 
\begin{equation}\label{35}
    \int_{\frac{1}{64}\cup_{\B}B}|\nabla\r(t)-\nabla v|^2\leq\Psi(E(\tilde{\r}(t),\s(t))-E(\r(t),\s(t))).
\end{equation}
\end{lemma}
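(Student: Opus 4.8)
The plan is to obtain $\r$ from $\tilde{\r}$ by finitely many rounds of free boundary harmonic replacement on the families of geodesic balls produced by Lemma~\ref{5.18}. Apply Lemma~\ref{5.18} to $(\tilde{\r},\s)$ to get collections $\B_1,\dots,\B_m$, cutoffs $\mu_1,\dots,\mu_m$ and intervals $I^{t_1},\dots,I^{t_m}$; set $\r_0=\tilde{\r}$ and inductively $\r_j(t)=H(\r_{j-1}(t),\frac{1}{4}\mu_j(t)\B_j)$, and let $\r=\r_m$. Each step is legitimate: at a fixed $t$ at most two of the $\mu_j(t)$ are positive and the energy of the relevant map on the active balls is at most $\frac{1}{3}\e_1$ (second bullet of Lemma~\ref{5.18}), so $H$ is the unique energy minimizer of Corollaries~\ref{6} and \ref{7}; by Corollary~\ref{8} together with the discussion preceding Lemma~\ref{5.17} (free boundary harmonic replacement on $\mu(s)B\subset\H'$ gives a continuous element of $\tilde{\O}$ in the same homotopy class), we get $(\r,\s)\in\tilde{\O}_{(\tilde{\r},\s)}$. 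Since $H$ replaces each slice by an energy minimizer with the same mixed boundary data, $E(\r_j(t),\s(t))\leq E(\r_{j-1}(t),\s(t))$, hence $E(\r(t),\s(t))\leq E(\tilde{\r}(t),\s(t))$ for all $t$.

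Next I would reduce \eqref{35} to an estimate on the maximal improvement $e_{\e_0,\r(t)}$. Fix $t$ and a collection $\B$ as in the statement, with $\int_{\cup_{\B}B}|\n\r(t)|^2\leq\e_0$, and let $v$ be the minimizer of case (1) or (2), so that $H(\r(t),\frac{1}{64}\B)$ coincides with $\r(t)$ off $\frac{1}{64}\B$ and equals $v$ on $\frac{1}{64}\B$. Since $\e_0\leq\e_1$, the energy convexity Theorems~\ref{energy convexity} and \ref{energy convexity free boundary} apply on each ball (in the hyperbolic metric, which on balls of radius $\leq r_0$ is conformal and uniformly equivalent to the flat one, cf. the Remark after Theorem~\ref{energy convexity free boundary}), and summing over $\B$ gives
\[
\int_{\frac{1}{64}\cup_{\B}B}|\n\r(t)-\n v|^2\leq 2\bigl(E(\r(t),\s(t))-E(H(\r(t),\tfrac{1}{64}\B),\s(t))\bigr).
\]
Writing $\frac{1}{64}\B=\frac{1}{4}\bigl(\frac{1}{16}\B\bigr)$ and using $E(\r(t),\frac{1}{16}\B)\leq\e_0$, the right-hand side is $\leq 2\,e_{\e_0,\r(t)}$ by the definition of the maximal improvement. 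Hence it suffices to find a universal continuous $\Psi$ with $\Psi(0)=0$ such that $e_{\e_0,\r(t)}\leq\frac{1}{2}\Psi\bigl(E(\tilde{\r}(t),\s(t))-E(\r(t),\s(t))\bigr)$.

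Set $\delta=\delta(t):=E(\tilde{\r}(t),\s(t))-E(\r(t),\s(t))\geq 0$. For the non-constant slices with $E(\tilde{\r}(t),\s(t))\geq\W_E(\tilde{\O}_{(\tilde{\r},\s)})$, the third bullet of Lemma~\ref{5.18} produces an index $j$ — necessarily active at $t$, since otherwise $\frac{1}{4}\mu_j(t)\B_j$ is empty and $e_{\frac{1}{8}\e_1,\tilde{\r}(t)}=0$, forcing $\tilde{\r}(t)$ harmonic — with $\delta\geq E(\tilde{\r}(t),\s(t))-E(H(\tilde{\r}(t),\frac{1}{4}\mu_j(t)\B_j),\s(t))\geq\frac{1}{8}e_{\frac{1}{8}\e_1,\tilde{\r}(t)}$, the first inequality by the energy monotonicity of the remaining replacements. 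On the other hand $\r(t)=H(\tilde{\r}(t),\B')$, where $\B'$ stands for the at most two active collections $\frac{1}{4}\mu_j(t)\B_j$ (carried out as one or two successive replacements, with \eqref{25} applied accordingly); applying \eqref{25} of Lemma~\ref{5.13} with $u=\tilde{\r}(t)$, $\B_1=\B'$, $\B_2=\B$ and scaling parameter $\p=\frac{1}{64}$ yields, for every admissible $\B$,
\[
E(\r(t),\s(t))-E(H(\r(t),\tfrac{1}{64}\B),\s(t))\leq\tfrac{1}{k}\,\delta^{1/2}+\bigl(E(\tilde{\r}(t),\s(t))-E(H(\tilde{\r}(t),\tfrac{1}{16}\B),\s(t))\bigr),
\]
and the last term is, after rescaling and shrinking $\e_0$ so that $\e_0\leq\frac{1}{8}\e_1$, at most $e_{\frac{1}{8}\e_1,\tilde{\r}(t)}\leq 8\delta$ by the previous line (here one also checks that $E(\tilde{\r}(t),\frac{1}{4}\B)$ and $E(\r(t),\frac{1}{4}\B)$ differ by at most $C\sqrt{\delta\e_0}$, via energy convexity on $\B'$, which is harmless since for $\delta$ bounded below $\Psi$ may be taken large). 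Taking the supremum over $\B$ gives $e_{\e_0,\r(t)}\leq\frac{1}{k}\delta^{1/2}+C\delta$; the remaining slices with $E(\tilde{\r}(t),\s(t))<\W_E$ are disposed of by the same comparison together with the $\e$–regularity estimate of Theorem~\ref{regularity}. Setting $\Psi(s)=2\bigl(\frac{1}{k}s^{1/2}+Cs\bigr)$ and $\e_0$ small enough completes \eqref{35}.

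\textbf{Main obstacle.} The crux is this last step: extracting a \emph{universal} continuous $\Psi$ and a \emph{universal} $\e_0$ from the comparison inequalities \eqref{24}--\eqref{25} and the covering of Lemma~\ref{5.18}, while keeping track of (i) the at most two active cutoff collections at each $t$ and whether their balls are spatially nested or disjoint, (ii) the interplay of the three scaling factors $\frac{1}{4}$, $\frac{1}{16}$, $\frac{1}{64}$ entering the definition of $e_{\e,\r(t)}$, Lemma~\ref{5.13}, and \eqref{35}, and (iii) the fact that $\s(t)$ — hence ``geodesic ball'', injectivity radius, the lift to the fixed component $\H'$, and the replacement operators themselves — varies continuously with $t$, which is exactly what forces the use of Claim~\ref{invariant of boundary}, Corollary~\ref{8}, and the continuity statements of Lemma~\ref{5.17}.
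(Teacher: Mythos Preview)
Your construction of $\r$ from $\tilde{\r}$ via successive replacements on the collections of Lemma~\ref{5.18}, the reduction of \eqref{35} to a bound on $e_{\e_0,\r(t)}$ via energy convexity, and the use of \eqref{25} (iterated through the two active steps) to pull the improvement back to $\tilde{\r}$ are all exactly the paper's approach.

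The gap is the sentence ``$\delta\geq E(\tilde{\r}(t))-E(H(\tilde{\r}(t),\tfrac{1}{4}\mu_j(t)\B_j))$, the first inequality by the energy monotonicity of the remaining replacements.'' This is only valid when the index $j$ produced by the third bullet of Lemma~\ref{5.18} is the \emph{first} of the two active indices at $t$. If $j$ is the \emph{second}, then $H(\tilde{\r}(t),\tfrac{1}{4}\mu_j(t)\B_j)$ does not lie on the chain $\tilde{\r}\to\r^1\to\r$ at all: the actual second step is $H(\r^1,\tfrac{1}{4}\mu_j\B_j)$, with boundary data already altered by the first replacement, and there is no monotonicity comparing $E(H(\tilde{\r},\tfrac{1}{4}\mu_j\B_j))$ with $E(\r)$. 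This is precisely the situation \eqref{24} is designed for. The paper replaces on the \emph{full} balls $\mu_k(t)\B_k$ (not $\tfrac{1}{4}\mu_k(t)\B_k$, as you do), so that \eqref{24} applied with $\B_2=\mu_j(t)\B_j$ gives
\[
\delta=E(\tilde{\r}(t))-E(\r(t))\geq k\bigl(E(\tilde{\r}(t))-E(H(\tilde{\r}(t),\tfrac{1}{4}\mu_j(t)\B_j))\bigr)^2\geq k\bigl(\tfrac{1}{8}e_{\frac{1}{8}\e_1,\tilde{\r}(t)}\bigr)^2,
\]
regardless of the position of $j$. Hence $e_{\frac{1}{8}\e_1,\tilde{\r}(t)}\leq C\sqrt{\delta}$, and feeding this into the output of the two applications of \eqref{25} yields $E(\r(t))-E(H(\r(t),\tfrac{1}{64}\B))\leq C'\sqrt{\delta}$; the paper takes $\Psi$ to be the square root and $\e_0=\tfrac{1}{12}\e_1$. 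Your choice to replace on the quarter--balls also breaks this use of \eqref{24}: with $\B_2=\tfrac{1}{4}\mu_j\B_j$ the right--hand side of \eqref{24} involves $H(\tilde{\r},\tfrac{1}{16}\mu_j\B_j)$, on which Lemma~\ref{5.18} gives no information.
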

\begin{proof}
The perturbation from $(\tilde{\r}(t),\s(t))$ to $(\r(t),\s(t))$ is done by successive harmonic replacements on the collection of balls given by Lemma \ref{5.18}. Denote $\tilde{\r}(t)=\r^0(t)$ and $r^k(t)=H(\r^{k-1}(t),\mu_k(t)\B_k)$ for $k=1,..,m.$ Then $\r(t)=\r^m(t)$. $\r$ is homotopic to $\tilde{\r}$ since continuously shrinking the geodesic balls on which we make harmonic replacement give an explicit homotopy. Thus, $(\r,\s)\in\tilde{\O}_{(\tilde{\r},\s)}$.

Fixed $t\in(0,1)$ such that $E(\r(t),\s(t))\geq\frac{1}{2}W_E(\tilde{\O}_{(\tilde{\r},\s)})$, Lemma \ref{5.18} implies that for some $\mu_j(t)$ that the harmonic replacement for $\tilde{\r}(t)$ on $\frac{1}{4}\mu_j(t)\B_j\subset\S_{\s(t)}$ decreases the energy by at least $\frac{1}{8}e_{\frac{1}{8}\e_1,\r(t)}$. Thus, even in the case such that $\frac{1}{4}\mu_j(t)\B_j\subset\S_{\s(t)}$ is the second family of geodesic balls that we do free boundary harmonic replacement on. \eqref{24} of Lemma \ref{5.13} gives that
\[E(\tilde{\r}(t),\s(t))-E(\r(t),\s(t))\geq k\big(\frac{1}{8}e_{\frac{1}{8}\e_1,\tilde{\r}(t)}\big)^2.\]
We deform $\tilde{\r}(t)$ to $\r(t)$ by at most two free boundary harmonic replacement, we denote the first one by $\r^1(t)$. For any collection $\B\subset\S_{\s(t)}$ with 
\[\frac{1}{2}\int_{\B}|\nabla\r(t)|^2\leq\frac{1}{12}\e_1,\]
we can assume that both $\tilde{\r}(t)$ and $\r^1(t)$ have energy less than $\frac{1}{8}\e_1$ on $\B$.
Now using \eqref{25} twice for $\rho=\frac{1}{64}$, $\frac{1}{16}$, we have
\begin{equation}
    \begin{split}
        E&(\r(t))-E(H(\r(t),\frac{1}{64}\B))\\
        &\leq E(\r^1(t))-E(H(\r^1(t),\frac{1}{16}\B))+\frac{1}{k}\{E(\r^1(t))-E(\r(t))\}^{\frac{1}{2}}\\
        &\leq E(\tilde{\r}(t))-E(H(\tilde{\r}(t),\frac{1}{4}\B))+\frac{1}{k}\{E(\tilde{\r}(t))-E(\r^1(t))\}^{\frac{1}{2}}\\
        &+\frac{1}{k}\{E(\tilde{\r}(t))-E(\r(t))\}^{\frac{1}{2}}\\
        &\leq\frac{1}{8}e_{\frac{1}{8}\e_1,\tilde{\r}(t)}+C(E(\tilde{\r}(t))-E(\r(t)))^{\frac{1}{2}}\\
        &\leq C'(E(\tilde{\r}(t))-E(\r(t)))^{\frac{1}{2}}.
    \end{split}
\end{equation}
We finish the proof by taking $\e_0=\frac{1}{12}\e_1$ and $\Psi$ the square root function.
\end{proof}
\section{Convergence results}
\subsection{Degeneration of the conformal structure}
We've defined bordered Riemann surface back in section \ref{bordered riemann surface}, and mentioned that the fixed bordered Riemann surface $\S_0$ that we use as domain for the sweepout is a bordered Riemann surface with genus $g\geq 1$ and $m\geq 1$ ideal boundary components, and none of the ideal boundary component is a puncture. Moreover, the double $\S^d_0$ of such bordered Riemann surface carries a hyperbolic structure. For any bordered Riemann surface $\S$, there exists an anti-holomorphic involution map $\sigma:\S^d\to\S^d$ such that $\S=\S^d/\sigma$ and $\l\S$ is just the fixed point set of $\sigma$. Now we introduce \textbf{nodal bordered Riemann surface}: a smooth bordered Riemann $\S$ is said to be of type $(g,m)$ if $\S$ is topologically a sphere attached with $g$ handles and $m$ disks removed. It is topologically equivalent to a compact surface of genus $g$ with $m$ punctures. A nodal bordered Riemann surface is of type $(g,m)$ if it is a degeneration of a smooth bordered Riemann surface of the same type.

There are three types of nodes for a nodal bordered Riemann surface. Let $(z,w)$ be the coordinate on $\C^2$ and $\sigma(z,w)=(\bar{z},\bar{w})$ be the complex conjugation.  A node on a bordered Riemann surface is a singularity which is locally isomorphic to one of the following: \begin{enumerate}
    \item Interior node: $(0,0)\in\{zw=0\}$
    \item Type I boundary node: $(0,0)\in\{z^2-w^2=0\}/\sigma$
    \item Type II boundary node: $(0,0)\in\{z^2+w^2=0\}/\sigma$
\end{enumerate}
\begin{description}
\item[1]{Interior node}\\
Let $U$ be a neighborhood of $(0,0)\in\C^2$ and $\Z=\{(z,w)\in U|zw=0\}$. Then $\Z$ has two components $\Z_1=\{(z,w)\in U|z=w\}$ and $\Z_2=\{(z,w)\in U|z=-w\}$, which are attached at the nodal point $\Z_1\cap\Z_2=\{(0,0)\}$. Note that the metric near the node on each component is flat. 
\item[2]{Type I boundary node}\\
Let $\Z=\{(z,w)\in U|z^2-w^2=0\}$. Then has two components 
\[\Z_1=\{(z,w)\in U|z=w\}\text{ and } \Z_2=\{(z,w)\in U|z=-w\},\]
which are attached at the point $\Z_1\cap\Z_2=\{(0,0)\}$. The map $\sigma$ gives an involution on each component $\Z_i$. The fixed point set of the involution is $F=\{(z,w)\in\C^2|Im(z)=Im(w)=0\}$. The boundary of $\Z_1/\sigma$ is just the fixed point set of the involution in $\Z_1$, i.e., $c_1=F\cap\Z_1=\{z=w\in\R^1\}$, which is a 1-dimensional curve. Similarly, the boundary of $\Z_2/\sigma$ is the curve $c_2=F\cap\Z_2=\{z=-w\in\R^1\}$. Thus the boundary curves $c_1$ and $c_2$ intersects at the node $(0,0)$.

By sending. $(z,w)$ to $(z-w,z+w),$ it's clear
that the surface $\Z$ here is isomorphic to the one in the interior node case. $\Z$ is the complex double of $\Z/\sigma$. Thus by using the involution map, the picture of Type I boundary node is actually a vertical half of the interior node case.
\item[3]{Type II boundary node}\\
Let $\Z=\{(z,w)\in U|z^2+w^2=0\}$. Then $\Z$ has two components 
\[\Z_1=\{(z,w)\in U|z=iw\}\text{ and }\Z_2=\{(z,w)\in U|z=-iw\},\]
which are attached at the point $\Z_1\cap\Z_2=\{(0,0)\}$. The map $\sigma$ gives an anti-holomorphic involution between $\Z_1$ and $\Z_2$. The only fixed point of $\sigma$ on $\Z$ is the node $(0,0).$ Thus the boundary of $\Z/\sigma$ is simply the node itself.

By sending $(z,w)$ to $(z-iw,z+iw)$, it's clear that the surface $\Z$ here is also isomorphic to the one in the interior case. Indeed, $\Z$ is the complex double of $\Z/\sigma$. Thus by using the symmetry map $\sigma$, the picture of Type II boundary node is actually a horizontal half of the interior node case. 
\end{description}
Suppose $\S$ is a bordered surface of type $(g,m)$ with $g\geq 1$ and $m\geq 1$. Let $\S^d$ be the corresponding complex double which is a closed Riemann surface with genus $g^d=2g+m-1
\geq 2$. Then by uniformization theorem, for each complex sturcture $j$ on $\S$, there exists a hyperbolic metric $h$ on $\S^d$. It's easy to see that $h$ and $j$ can be extended to its complex double $\S^d$ such that $h^d$ and $j^d$ are symmetric w.r.t. the anti-holomorphic involution $\sigma.$ Now we denote the bordered hyperbolic surface by the triple $(\S,h,j)$ and state the following convergence result.
\begin{definition}\label{convergence}
Given a sequence  $\{(\S_n,h_n,j_n)\}_{n\in\N}$ of bordered Riemann surface with genus $g\geq 1$ and boundary component $m\geq 1$ such that its double carries a hyperbolic structure. $\{(\S_n,h_n,j_n)\}_{n\in\N}$ is said to converge to a nodal bordered Riemann surface $(\S_\infty,h_\infty,j_\infty)$ if there exists a sequence of finite sets
\[\L_n=\{\a^i_n,\b^j_n,c^k_n\}\subset\S_n\]
consisting of pairwise disjoint geodesic in $\S^d_n$, where
\begin{enumerate}
    \item $\a_n^i$ belongs to the boundary component of $\S_n$, i.e., $\a\in\l\S_n$.
    \item 
    $\b^j_n$ is an interior geodesic with its boundary lies on $\l\S_n$. 
    \item $c^k_n$ is an interior geodsesic of $\S_n$. 
\end{enumerate}
Such that there exists a sequence of continuous mapping 
\[\phi_n:\S_n\to\S_\infty,\]
satisfying the following conditions as $n\to\infty$:
\begin{enumerate}
\item $\phi_n(\a_n^i)=p^i$, where $p^i$ is a type I node of $\S_\infty$,
\item $\phi_n(\b_n^j)=q^j$, where $q^j$ is a type II node of $\S_\infty$,
\item $\phi_n(c_n^k)=r^k$, where $r^k$ is an interior node of $\S_\infty$,
\item 
\[\phi_n:\S_n\setminus\L_n\to\S_\infty\setminus\{p^i,q^j,r^k\},\]
is a diffeomorphism.
\item $\phi_n(h_n)\to h_\infty$ in $C^\infty_{loc}(\S_\infty\setminus\{p^i,q^j,r^k\}),$
\item $\phi_n(j_n)\to j_\infty$ in $C^\infty_{loc}(\S_\infty\setminus\{p^i,q^j,r^k\}).$
\end{enumerate}
\end{definition}
\begin{proposition}\cite[Proposition 5.1]{Gromov}\label{convergence result}
For any sequence of bordered Riemann surface $\{(\S_n,h_n,j_n)\}_{n\in\N}$, there exists a nodal bordered Riemann surface $(\S_\infty,h_\infty,j_\infty)$ such that $\{(\S_n,h_n,j_n)\}_{n\in\N}$ converges to up to subsequence.
\end{proposition}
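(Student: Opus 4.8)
The plan is to reduce Proposition \ref{convergence result} to the classical compactness theorem for sequences of closed hyperbolic surfaces --- the Deligne--Mumford / Mumford compactness theorem, obtained from the collar lemma together with a Cheeger--Gromov diagonal argument --- by passing to the complex double, and then to carry the anti-holomorphic involution through the degeneration so that it descends to the nodal limit. Concretely, to each $(\S_n,h_n,j_n)$ I would associate its complex double $(\S_n^d,h_n^d,j_n^d)$ as in Section \ref{bordered riemann surface}. Since $g\ge 1$, $m\ge 1$ and no ideal boundary component is a puncture, $\S_n^d$ is a \emph{closed} Riemann surface of genus $g^d=2g+m-1\ge 2$, so $h_n^d$ is the hyperbolic metric compatible with $j_n^d$; moreover $\S_n^d$ carries an anti-holomorphic isometric involution $\sigma_n$ with $\S_n=\S_n^d/\sigma_n$ and $\mathrm{Fix}(\sigma_n)=\l\S_n$, and by Claim \ref{border geodesic} each component of $\l\S_n$ is a simple closed geodesic of $\S_n^d$.

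Next I would apply Mumford compactness to $(\S_n^d,h_n^d)$. The collar lemma splits each surface into a thick part, where the injectivity radius is bounded below by a fixed constant, and a thin part, which is a disjoint union of standard collars around the simple closed geodesics $\G_n$ whose lengths tend to $0$; after passing to a subsequence one may assume that $|\G_n|$ is constant and at most $3g^d-3$. The thick parts have uniformly bounded geometry and diameter, so after a further subsequence and a choice of diffeomorphisms between them they converge in $C^\infty_{loc}$, each collar in the thin part degenerates to a node, and since the hyperbolic metric determines the conformal structure one obtains simultaneously the $C^\infty_{loc}$ convergence of the complex structures. This produces a closed nodal Riemann surface $(\S_\infty^d,h_\infty^d,j_\infty^d)$ together with continuous maps $\psi_n:\S_n^d\to\S_\infty^d$ collapsing $\G_n$ onto the nodes and restricting to diffeomorphisms of the complement.

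The crucial point is to carry out all of this $\sigma_n$-equivariantly. Since $\sigma_n$ is an isometry of $h_n^d$, it preserves the thick-thin decomposition and permutes the short geodesics, so $\G_n$ may be chosen $\sigma_n$-invariant, and the collars together with the Cheeger--Gromov diffeomorphisms between thick parts may likewise be chosen $\sigma_n$-equivariant. Then $\sigma_n$ converges to an anti-holomorphic involution $\sigma_\infty$ of $\S_\infty^d$ with $\psi_n\circ\sigma_n=\sigma_\infty\circ\psi_n$. Setting $\S_\infty:=\S_\infty^d/\sigma_\infty$ with induced data $h_\infty,j_\infty$ and $\phi_n:=\psi_n/\sigma_n:\S_n\to\S_\infty$, I would then analyze each geodesic of $\G_n$ according to its position relative to $\mathrm{Fix}(\sigma_n)=\l\S_n$: it may be pointwise fixed and contained in $\l\S_n$, it may be $\sigma_n$-invariant and cross $\l\S_n$ transversally (descending to an arc with endpoints on $\l\S_n$), or it may be disjoint from its $\sigma_n$-image (the pair descending to a single interior curve of $\S_n$). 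Matching the corresponding quotient collars with the three local node models introduced above --- via the coordinate changes recorded there --- should show that the images $\a_n^i,\b_n^j,c_n^k\in\S_n$ of these curves, the sets $\L_n$, and the maps $\phi_n$ satisfy every clause of Definition \ref{convergence}, and that $\S_\infty$ is a nodal bordered Riemann surface of the same type $(g,m)$.

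The hard part will be the equivariance bookkeeping together with this node classification: one has to check that the short geodesics, their collars, and the diffeomorphisms between thick parts can all be chosen compatibly with $\sigma_n$, so that the limiting $\sigma_\infty$ is genuinely an anti-holomorphic involution whose fixed-point set is $\l\S_\infty$ --- neither acquiring spurious fixed points nor losing any --- and that a pinching curve lying on or meeting $\l\S_n$ produces a boundary node of exactly the claimed local type rather than, say, detaching a disk or creating a cusp. Once equivariance and the classification are in place, the convergence assertions of Definition \ref{convergence} follow directly from the closed-surface compactness theorem applied to $(\S_n^d,h_n^d,j_n^d)$ together with its quotient by $\sigma_\infty$.
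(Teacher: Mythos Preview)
The paper does not prove this proposition at all: it is simply quoted as \cite[Proposition 5.1]{Gromov} and used as a black box, with the surrounding text (the three local node models, the collar description in Lemma \ref{collar nbd}, the discussion of the two possible anti-holomorphic involutions on a hyperbolic cylinder) serving only to unpack the geometric content of the cited result, not to establish it. So there is no ``paper's own proof'' to compare against.

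That said, your outline is the standard and correct way to obtain this statement, and it is essentially the argument in the cited reference: pass to the complex double $\S_n^d$, apply Deligne--Mumford/Mumford compactness for closed hyperbolic surfaces of genus $g^d=2g+m-1\ge 2$, and carry the anti-holomorphic isometric involution $\sigma_n$ through the thick--thin decomposition so that it converges to an involution $\sigma_\infty$ on the nodal limit, with $\S_\infty=\S_\infty^d/\sigma_\infty$. Your trichotomy for how a pinching geodesic sits relative to $\mathrm{Fix}(\sigma_n)$ matches exactly the three node types the paper describes (type II, type I, interior, in your order), and the paper's own discussion of the two involutions $(t,\theta)\mapsto(t,2\pi-\theta)$ and $(t,\theta)\mapsto(-t,\theta)$ on the collar $\tilde P_n$ is precisely the local model computation you would need to complete the classification. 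The equivariance bookkeeping you flag as the ``hard part'' is genuine but routine once one observes that $\sigma_n$ is an isometry of $h_n^d$ and hence preserves the length spectrum, the set of short geodesics, and the thick--thin decomposition; there is no obstruction to making the Cheeger--Gromov diffeomorphisms $\sigma_n$-equivariant. Nothing in your sketch is wrong or missing a key idea.
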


\subsection{Hyperbolic geometry near nodes}

We have the following description of the geometry near the geodesic $c$ on a Riemann surface with hyperbolic structure. We refer to \cite[Chap. 4]{Gromov} for more details.
\begin{lemma}\cite[Chap.4 Lemma 1.6]{Gromov} \label{collar nbd}
For any simply connected geodesic $c$ with length $l$ in a hyperbolic surface $\S^d$, there exists a collar neighborhood of $c$, which is isomorphic to the following collar region in the hyperbolic plane $\H$:
\begin{equation}\label{collar}
\mathcal{C}(c)=\Big\{z=re^{i\theta}\in\H:1\leq r\leq e^l, \theta_0(l)
\leq\theta\leq\pi-\theta_0(l)\Big\},    
\end{equation}
with the circles $\{r=1\}$ and $\{r=e^l\}$ identified by the isometry.  $\G_l:z\to e^lz$. Here $\theta_0(l)=\tan^{-1}(\sinh(\frac{l}{2}))$ and the geodesic $c$ is isometric to $\{z=re^{i\frac{\pi}{2}}\in i\R:1\leq r\leq e^l\}.$
\end{lemma}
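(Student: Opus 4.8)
The plan is to reduce the statement to a coordinate computation in the universal cover together with the classical collar (precise invariance) estimate. Write $\S^d=\H/\G$, where $\G<\mathrm{PSL}(2,\R)$ is the Fuchsian group, and fix a lift $\tilde c\subset\H$ of the simple closed geodesic $c$. Then $\tilde c$ is a complete geodesic whose two ideal endpoints are the fixed points of a primitive hyperbolic element $\gamma_0\in\G$, and the translation length of $\gamma_0$ along its axis equals the length $l$ of $c$. Conjugating $\G$ by an element of $\mathrm{PSL}(2,\R)$ — harmless, as it is an isometry of $\H$ — I may assume $\tilde c$ is the positive imaginary axis and $\gamma_0$ is the dilation $z\mapsto e^{l}z$. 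The stabilizer of $\tilde c$ in $\G$ is then exactly $\langle\gamma_0\rangle$, and $\{re^{i\pi/2}:1\le r\le e^{l}\}$ is a fundamental domain for $\langle\gamma_0\rangle$ acting on $\tilde c$, of hyperbolic length $\int_1^{e^l}dr/r=l$; this already gives the last assertion of the Lemma.

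Next I would identify the metric tube around $\tilde c$. A direct computation with the hyperbolic metric $|dz|^2/(\mathrm{Im}\,z)^2$ shows that for $z=re^{i\theta}\in\H$ the distance to the imaginary axis satisfies $\sinh d_\H(z,\tilde c)=|\cot\theta|$, equivalently $\tanh d_\H(z,\tilde c)=|\cos\theta|$. Hence the open $w$-neighborhood $N_w(\tilde c)=\{z:d_\H(z,\tilde c)<w\}$ is exactly the angular sector $\{re^{i\theta}:\theta_w<\theta<\pi-\theta_w\}$ with $\cot\theta_w=\sinh w$. Choosing $w=\sinh^{-1}\!\big(1/\sinh(l/2)\big)$ forces $\cot\theta_w=1/\sinh(l/2)$, i.e.\ $\theta_w=\tan^{-1}(\sinh(l/2))=\theta_0(l)$, so that $N_w(\tilde c)/\langle\gamma_0\rangle$ is isometric to the model region $\mathcal C(c)$ of \eqref{collar}, with $\{r=1\}$ and $\{r=e^l\}$ glued by $z\mapsto e^{l}z$ and $c$ corresponding to the imaginary-axis segment. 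It remains only to check that this quotient embeds in $\S^d$ as a genuine neighborhood of $c$.

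That embeddedness is the heart of the matter and the one real obstacle; I would establish it by showing $N_w(\tilde c)$ is precisely invariant under $\langle\gamma_0\rangle$ in $\G$, i.e.\ $\gamma N_w(\tilde c)\cap N_w(\tilde c)=\emptyset$ for every $\gamma\in\G\setminus\langle\gamma_0\rangle$. For such $\gamma$ the geodesic $\gamma\tilde c$ is distinct from $\tilde c$; since $c$ is simple, $\gamma\tilde c$ and $\tilde c$ do not cross, and since two distinct hyperbolic axes of a Fuchsian group cannot share a single ideal endpoint, they are ultraparallel, with a unique common perpendicular of some length $\delta>0$. If their $w$-tubes met, then $\delta<2w$. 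On the other hand, analyzing the configuration built from $\tilde c$, $\gamma\tilde c$ and $\gamma_0$ by the trigonometry of a right-angled hyperbolic hexagon — equivalently, invoking the standard lower bound $\sinh(\delta/2)\sinh(l/2)\ge 1$ for the distance between two distinct lifts of a simple closed geodesic of length $l$ — gives $\delta\ge 2\sinh^{-1}(1/\sinh(l/2))=2w$, a contradiction. Hence the projection $N_w(\tilde c)\to\S^d$ is injective on the fundamental annulus $\{1\le r\le e^l,\ \theta_0(l)\le\theta\le\pi-\theta_0(l)\}$ and is a local isometry, so its image is the desired embedded collar isometric to $\mathcal C(c)$. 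Only this trigonometric inequality requires real work; everything else is a change of coordinates, and one may alternatively cite \cite[Chap.~4, Lemma~1.6]{Gromov} verbatim.
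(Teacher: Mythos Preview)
Your argument is correct and is precisely the standard proof of the Collar Lemma: normalize a lift of $c$ to the imaginary axis, identify the $w$-tube as an angular sector via $\sinh d_\H(re^{i\theta},i\R)=|\cot\theta|$, and then invoke precise invariance from the inequality $\sinh(\delta/2)\sinh(l/2)\ge 1$ for the distance between distinct lifts of a simple closed geodesic.

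There is nothing to compare against, however: the paper does not supply a proof of this lemma. It is stated with a citation to \cite[Chap.~4, Lemma~1.6]{Gromov} and used as a black box; the paper only records the conformal change of coordinates $(r,\theta)\mapsto(t,\phi)=\big(\tfrac{2\pi}{l}\theta,\tfrac{2\pi}{l}\log r\big)$ that turns $\mathcal C(c)$ into a flat cylinder, which is downstream of the lemma rather than a proof of it. So you have in fact supplied more than the paper does. One small remark: the phrase ``simply connected geodesic'' in the statement is a slip for ``simple closed geodesic'', and you have (correctly) read it that way.
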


We can give a explicit metric on the collar region by a conformal change of coordinates. Now we can view the parameters $r$ and $\theta$ in \eqref{collar} as azimuthal and vertical coordinates for a cylinder respectively. Under the following transformation:
\[re^{i\theta}\to(t,\phi)=\big(\frac{2\pi}{l}\theta,\frac{2\pi}{l}\log(r)\big),\]
where $l$ is the length of the center, the collar region $\mathcal{C}(c)$ is conformally transformed to a cylinder
\[\P(c)=\Big\{(t,\phi):\frac{2\pi}{l}\theta_0\leq t\leq\frac{2\pi}{l}(\pi-\theta_0),0\leq\phi\leq 2\pi\Big\},\]
and the hyperbolic metric $ds^{2}_{-1}=\frac{dz^2}{|Im(z)|^2}$ is expressed as 
\[ds^{2}_{-1}=\Big(\frac{l}{2\pi\sin(\frac{l}{2\pi}t)}\Big)^2(dt^2+d\phi^2),\]
which is conformal to the standard cylindrical metric $ds^2=dt^2+d\phi^2$. We can see that if the geodesic $c$ shrink to a point, then $l\to 0$, and a conformally infinitely long cylinder will appear.

Suppose $\{(\S_n,h_n,j_n)\}_{n\in\N}$ is a sequence of degenerating hyperbolic surfaces which converges to a hyperbolic surface $(\S_{\infty},h,j)$ with finitely many nodes $\{x_1,...,x_k;z_1,...,z_l\}$ where $x_i$'s are on the boundary and $z_i$'s are interior nodes. Then the complex $\S^d_n$ double of $\S_n$ converges to the complex double $\S^d_{\infty}$ of $\S_{\infty}$ with nodes $\{x_1,...,x_k;z_1,...,z_l,z'_1,...,z_l'\}$ where $z'_j=\sigma(z_j)\in\S^d_{\infty}$. $\S_{\infty}^d$ is obtained by pinching $k+2l$ pairwise disconnected Jordan curves to the nodes.
 
For simplicity, we first assume there is only one node $z$. If $z$ is an interior nodes, i.e. $z\neq\l\S_\infty$, then there exists a sequence of collar area $\mathcal{C}(c_n)\subset\S_n$ near a sequence of closed geodesics $c_n\subset\S_n$, which is isomorphic to a sequence of hyperbolic cylinder $\P(c_n)=[-T_n,T_n]\times S^1$ with $T_n\to\infty$. Moreover, $\S_n\setminus\mathcal{C}(c_n)$ converges to $\S_\infty\setminus\{z\}$ smoothly. The local geometry near the node $z$ on each component of $\S_\infty\setminus\{z\}$ is a standard hyperbolic cusp.

If $x\in\l\S_{\infty}$ is a boundary node, then there also exists a collar area $\mathcal{C}(c_n)$ which lies in the double $\S^d_{\infty}$, near a closed geodesic $c_n\subset\S^d_n$ and isomorphic to a hyperbolic cylinder $\tilde{P}_n=[-T_n,T_n]\times S^1$. Note that the cylinder $\tilde{P}_n$ is symmetric w.r.t. the involution $\sigma$, i.e., $\sigma^*ds^2=ds^2$. However, there are only two possible involutions on the hyperbolic cylinder $\tilde{P}_n$ which is anti-holomorphic as follows
\begin{enumerate}
\item
The first one corresponds to the symmetry $\tilde{P}_n$ w.r.t. the horizontal lines 
\[\xi^1_n=\{(t,\theta)|\theta=0,\pi\}\subset\tilde{P}_n.\]
Namely, the involution $\sigma:\tilde{P}_n\to\tilde{P}_n$ maps $(t,\theta)$ to $(t,2\pi-\theta)$. In this case, $\xi^1_n$ lies in the boundary $\l\S_n$. Let $Q^+_n=[-T_n,T_n]\times[0,\pi]$ be the vertical half of $\tilde{P}_n$ in $\S_n$. Then $\S_n\setminus Q^+_n$ converges to $\S_{\infty}\setminus\{x\}$ smoothly. The node is obtained by shrinking geodesic $c_n$ which connects two points at the boundary $\l\S_n$ to the point $x$. This is exactly the type I node described in the previous section.

If we adapt the collar neighborhood $\mathcal{C}(c_n)$ defined in Lemma \ref{collar nbd} and assume that the geodesic $c_n$ is isometric to $\{z=re^{i\frac{\pi}{2}}\in i\R:1\leq r\leq e^l\}.$ Then we can write the \textit{half cylinder} that degenerates to type I node as the following
\begin{equation}\label{type1}
\P^1(c_n)=\Big\{(t,\phi):\frac{2\pi}{l}\theta_0(l)\leq t\leq\frac{2\pi}{l}(\pi-\theta_0(l)),0\leq\phi\leq \pi\Big\},    
\end{equation}
where $\theta_0(l)=\tan^{-1}(\sinh(\frac{l}{2}))$, and $l\to 0$ as $c_n$ shrinks to a point.
\item
The second one corresponds to the symmetry of $\tilde{P}_n$ w.r.t. the vertical middle circle
\[\xi^2_n=\{(t,\theta)|t=0\}\subset\tilde{P}_n.\]
Namely, involution $\sigma:\tilde{P}_n\to\tilde{P}_n$ maps $(t,\theta)\to(-t,\theta)$. In this case, $\xi^2_n$ is fixed by $\sigma$ and hence belongs to the boundary $\l\S_n$. Let $\tilde{P}^+_n=[0,T_n]\times S^1$ be the horizontal half of $\tilde{P}_n$ in $\S_n$. Then $\S_n\setminus\tilde{P}^+_n$ converges to $\S_\infty\setminus\{x\}$ smoothly. The node is obtained by shrinking a boundary curve (also can be viewed as a geodesic in the double $\S^d_n$) to a point. This is exactly type II node described in the previous section. 

If we adapt the collar neighborhood $\mathcal{C}(c_n)$ defined in Lemma \ref{collar nbd} and assume that the geodesic $c_n$ is isometric to $\{z=re^{i\frac{\pi}{2}}\in i\R:1\leq r\leq e^l\}.$ Then we can write the \textit{half cylinder} that degenerates to type II node as the following
\begin{equation}\label{type2}
\P^2(c_n)=\Big\{(t,\phi):\frac{2\pi}{l}\theta_0(l)\leq t\leq\frac{2\pi}{l}(\frac{\pi}{2}),0\leq\phi\leq 2\pi\Big\},    
\end{equation}
where $\theta_0(l)=\tan^{-1}(\sinh(\frac{l}{2}))$, and $l\to 0$ as $c_n$ shrinks to a point. Note that $\frac{2\pi}{l}(\frac{\pi}{2})$ is $\frac{1}{2}\Big(\frac{2\pi}{l}\theta_0(l)+\frac{2\pi}{l}(\pi-\theta_0(l))\Big)$.

\end{enumerate}
\subsection{Convergence}
\subsubsection{Convergence on domains}
In the section we consider the following sequence: $\{\r_n\}_{n\in\N}$ such that
\begin{enumerate}
    \item $\r_n\in C^0(\bar{\S}_n,M)\cap W^{1,2}(\S_n,N)$,
    \item $\r_n|_{\l\S_n}\subset N$,
    \item $\text{Area}(\r_n)-E(\r_n,\S_n)\to 0$
    \item For any finite collection $\B$ of disjoint geodesic balls on $\S_n$ with radius less than $r_0$ (see \eqref{rho}) and the injective radius of the center of that ball on $\S_n^d$. If the energy of $r_n$ on $\B$ is less than $\e_0$ as Lemma \ref{deformation lemma}. Let $B_i\in\B$ be the interior geodesic ball, and let $v$ be the harmonic replacement map  with $v=\r_n$ on $\cup_i\frac{1}{64}B_i$, then we have
    \begin{equation}\label{38}
    \int_{\cup_i\frac{1}{64}B_i}|\nabla\r_n-\nabla v|^2\to 0,    
    \end{equation}
    \item For any finite collection $\B$ of disjoint geodesic balls on $\S_n$ with radius less than $r_0$ (see \eqref{rho}) and the injective radius of the center of that ball on $\S_n^d$. If the energy of $r_n$ on $\B$ is less than $\e_0$ as Lemma \ref{deformation lemma}. Let $D_i^+\in\B$ be the geodesic ball such that it intersects the boundary $\l\S_n$ orthogonally, let $v$ be the free boundary harmonic replacement map with $v=\r_n$ on $\frac{1}{64}\l^A_i$ and $v\subset N$ on $\frac{1}{64}\l^C_i$, then we have
    \begin{equation}\label{39}
    \int_{\cup_i\frac{1}{64}D^+_i}|\nabla\r_n-\nabla v|^2\to 0.    
    \end{equation}
\end{enumerate}
Since area and energy are both conformal invariant, we can choose representatives $(\S_n,h_n,j_n)$ in the conformal classes of $\{\S_n\}_{n\in\N}\subset\T(\S_0)$. We say that $\{\S_n\}_{n\in\N}$ converges to $\S_\infty$ (possibly a nodal bordered Riemann surface) if we can find hyperbolic representatives $(\S_n,h_n,j_n)\in\S_n$ and $(\S_\infty,h_\infty,j_\infty)\in\S_\infty$ such that $(\S_n,h_n,j_n)$ converges to $(\S_\infty,h_\infty,j_\infty)$ in the sense of Definition \ref{convergence}. By Proposition \ref{convergence result}, given $\{\S_n\}_{n\in\N}$, there exists a nodal bordered Riemann surface $(\S_\infty,h_\infty,j_\infty)$ such that $(\S_n,h_n,j_n)\in\S_n$ converges to up to subsequence in the sense of Definition \ref{convergence}. By Definition \ref{convergence}, we know that that there exists a sequence of continuous mapping such that 
\[\phi_n:\S_n\to\S_\infty,\]
satisfying the following conditions as $n\to\infty$:
\begin{enumerate}
\item $\phi_n(\a_n^i)=p^i$, where $p^i$ is a type I node of $\S_\infty$,
\item $\phi_n(\b_n^j)=q^j$, where $q^j$ is a type II node of $\S_\infty$,
\item $\phi_n(c_n^k)=r^k$, where $r^k$ is an interior node of $\S_\infty$,
\item 
\[\phi_n:\S_n\setminus\L_n\to\S_\infty\setminus\{p^i,q^j,r^k\},\]
is a diffeomorphism.
\item $\phi_n(h_n)\to h_\infty$ in $C^\infty_{loc}(\S_\infty\setminus\{p^i,q^j,r^k\}),$
\item $\phi_n(j_n)\to j_\infty$ in $C^\infty_{loc}(\S_\infty\setminus\{p^i,q^j,r^k\}).$
\end{enumerate}
We denote the set $\S_\infty\setminus\{p^i,q^j,r^k\}$ by $\S_\infty^*$.
\begin{lemma}\label{domain convergence}
Given a sequence $\{\r_n\}_{n\in\N}$ as above with bounded energy. Then there exists finitely many points $\{x_1,...,x_l\}\subset\S_\infty^*$ and a free boundary conformal harmonic map $u_0$ such that
\begin{enumerate}
    \item $u_0\in C^0(\bar{\S}^*_\infty,M)\cap W^{1,2}(\S^*_\infty,M)$,
    \item $u_0|_{\l\S^*_{\infty}}\subset N$,
    \item for any compact subset $K\subset\S_\infty^*\setminus\{x_1,...,x_l\}$, we have $\r_n\circ\phi_n^{-1}$ converges to $u_0$ in $W^{1,2}$ on $K$ up to subsequence.
\end{enumerate}
\end{lemma}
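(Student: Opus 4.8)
The plan is to run a standard $\varepsilon$-regularity plus bubbling argument on the ``thick'' part of the degenerating domains, treating the free-boundary points and interior points on an equal footing by using the $\varepsilon$-regularity of Theorem~\ref{regularity} near $\partial\Sigma_n$ and the interior counterpart in the bulk. First I would fix an exhaustion $K_1\subset K_2\subset\cdots$ of $\Sigma_\infty^*$ by compact sets with $\bigcup_j K_j=\Sigma_\infty^*$, and pull everything back via $\phi_n$. On each $K_j$ the metrics $\phi_n(h_n)$ and the conformal structures $\phi_n(j_n)$ converge in $C^\infty_{loc}$ by Definition~\ref{convergence}, so for $n$ large the maps $\r_n\circ\phi_n^{-1}$ are defined on a fixed open set with a fixed (up to $C^\infty$-small perturbation) conformal structure. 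Since $\sup_n E(\r_n,\Sigma_n)<\infty$, a Vitali-type covering argument produces a finite set of concentration points: define, for $x\in\Sigma_\infty^*$, the concentration as $\liminf_{r\to 0}\liminf_{n\to\infty}E(\r_n\circ\phi_n^{-1}, B_r(x))$, and let $\{x_1,\dots,x_l\}$ be those $x$ where this exceeds $\tfrac{1}{2}\min\{\varepsilon_0,\varepsilon_1\}$ (with $\varepsilon_0$ from Lemma~\ref{deformation lemma} and $\varepsilon_1$ from Theorems~\ref{energy convexity}, \ref{regularity}); the energy bound forces $l<\infty$.

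Next, away from $\{x_1,\dots,x_l\}$ and away from the nodes, each point $p$ (interior or boundary) has a small geodesic ball $B$ or half-ball $D^+$ on which $\r_n\circ\phi_n^{-1}$ has energy below $\varepsilon_0$ for $n$ large. Here I would invoke properties (4) and (5) of the sequence: letting $v_n$ be the (free boundary) harmonic replacement of $\r_n\circ\phi_n^{-1}$ on the inner scale $\tfrac{1}{64}B$, one has $\int_{\frac{1}{64}B}|\nabla(\r_n\circ\phi_n^{-1})-\nabla v_n|^2\to 0$ by \eqref{38}--\eqref{39}. The maps $v_n$ are genuine (free boundary) harmonic maps with small energy, so by Theorem~\ref{energy convexity}, Theorem~\ref{regularity} they enjoy uniform $C^{2,\alpha}_{loc}$ (up to the free boundary, on $(\partial^C)^0$) estimates; hence $v_n\to v$ in $C^2_{loc}$ and in $W^{1,2}$ on a slightly smaller ball, with $v$ harmonic, $v|_{\partial^C}\subset N$. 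Because $\|\nabla(\r_n\circ\phi_n^{-1})-\nabla v_n\|_{L^2}\to 0$ on the inner ball, the limit of $\r_n\circ\phi_n^{-1}$ agrees with $v$ there; thus $\r_n\circ\phi_n^{-1}$ converges in $W^{1,2}$ on these inner balls to a map $u_0$ that is locally (free boundary) harmonic. Covering any compact $K\subset\Sigma_\infty^*\setminus\{x_1,\dots,x_l\}$ by finitely many such inner balls, one patches the local limits into a single $u_0$ defined on $\Sigma_\infty^*\setminus\{x_1,\dots,x_l\}$, and a standard removable-singularity argument for harmonic maps with bounded energy (interior: Sacks--Uhlenbeck/Colding--Minicozzi; boundary: the free-boundary removability as in \cite{LAX}, \cite{PR}) extends $u_0$ harmonically across the $x_i$'s. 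Conformality of $u_0$ follows from property (3), $\mathrm{Area}(\r_n)-E(\r_n,\Sigma_n)\to 0$, which passes to the limit on compact subsets; the $C^0$ bound and continuity up to $\partial\Sigma_\infty^*$ follow from the equicontinuity in property of the harmonic replacements (equicontinuity near $\partial^A$ from the Dirichlet data and interior/boundary $\varepsilon$-regularity elsewhere), exactly as in Corollary~\ref{8}.

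The one genuinely delicate point is the behaviour near the nodes. A priori $\Sigma_\infty^*$ is non-compact (it has hyperbolic cusps/half-cusps where the collars $\mathcal{C}(c_n)$, $\mathcal{P}^1(c_n)$, $\mathcal{P}^2(c_n)$ of Lemma~\ref{collar nbd} degenerate), so ``compact $K\subset\Sigma_\infty^*$'' genuinely excludes neighborhoods of the nodes, and the statement as written only asserts $W^{1,2}$-convergence on such $K$; thus no analysis of the neck regions is needed for this particular lemma — the energy lost into the collars is exactly what later produces the bubble spheres $v_i$ and free boundary disks $\w_j$ in Theorem~\ref{main}. What does require care is that the concentration points $x_i$ stay uniformly away from the nodes: since $\phi_n$ is only a homeomorphism near $\Lambda_n$ and the $x_i$ are defined intrinsically on $\Sigma_\infty^*$, one must check that a ball $B_r(x_i)\subset\Sigma_\infty^*$ pulls back under $\phi_n$ to a region on which $h_n$ is uniformly comparable to $h_\infty$ — which it does, since $x_i\in\Sigma_\infty^*$ has positive distance to all nodes and $\phi_n(h_n)\to h_\infty$ in $C^\infty_{loc}(\Sigma_\infty^*)$. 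I expect this bookkeeping — reconciling the intrinsic description of concentration points and limit map on $\Sigma_\infty^*$ with the $n$-dependent collars and the only-locally-smooth diffeomorphisms $\phi_n$ — to be the main technical obstacle, whereas the $\varepsilon$-regularity, harmonic replacement comparison, and removable singularity steps are by now routine.
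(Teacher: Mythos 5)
The paper does not actually supply a proof of this lemma: it only remarks that the argument is similar to \cite{SU}, \cite{CD}, \cite[Lemma 5.1]{minimaltorus}, \cite[Lemma 7.2]{LAX}, \cite{PR} and omits it. Your reconstruction is correct and is essentially the argument those references carry out: exhaust $\Sigma_\infty^*$ by compacta, use $C^\infty_{loc}$-convergence of $\phi_n(h_n)$ and $\phi_n(j_n)$ to work on a fixed chart, run a Vitali covering to isolate finitely many concentration points, and on small balls/half-balls with sub-$\varepsilon_0$ energy invoke properties (4)--(5) of the sequence to replace $\r_n\circ\phi_n^{-1}$ by genuine (free boundary) harmonic maps $v_n$ at negligible $W^{1,2}$-cost, then pass to a $C^{2,\alpha}_{loc}$ limit via Theorem~\ref{regularity}, patch, and remove the singularities. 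Your observation that the node regions play no role here because compact $K\subset\Sigma_\infty^*$ stays away from them is the right reading of the statement, and the conformality-from-$\mathrm{Area}-E\to 0$ step is valid since strong $W^{1,2}$-convergence on $K$ gives both $E(\cdot,K)$ and $\mathrm{Area}(\cdot,K)$ continuity and the integrand of $E-\mathrm{Area}$ is pointwise nonnegative. Two small quibbles worth fixing: the threshold for the concentration set should be taken against $\varepsilon_{su}$ and $\varepsilon_F$ (the constants appearing in the definition of interior/boundary concentration points), and the sentence about $C^0$-regularity up to $\partial\Sigma_\infty^*$ has a typo and is phrased loosely — it should cite the equicontinuity coming from Theorem~\ref{regularity}/Corollary~\ref{8} near $\l^C$, together with the $C^0$-closeness of $\r_n\circ\phi_n^{-1}$ to its replacement from Corollary~\ref{7}.
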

The proof of Lemma \ref{domain convergence} is similar to \cite{SU}\cite{CD}\cite[Lemma 5.1]{minimaltorus}\cite[Lemma 7.2]{LAX}\cite{PR}, so we omit the proof here. 

We've discussed at the beginning of Chapter 5 that for $\r_n\in C^0(\S_n,M)\cap W^{1,2}(\S_n,M)$, $\S_n\in\T(\S_0)$, since the double of $\S_n$ carries a hyperbolic structure and Claim \ref{invariant of boundary} implies that $\pi^{-1}_{\s_n}(\S_n)$ is fixed $\forall n\in\N$, we can pull $\r_n$ back to a fixed subset $\H'\subset\H$, and we can assume without loss of generality that one of the components of $\l\S_n$ can be pulled back to $i\R\subset\H.$ Let $\psi_b$ be defined as the following
\[\psi_b=\frac{z-(b+1)}{z-(b-1)},\quad b\in i\R\subset\H.\]
$\psi_b$ is a conformal map that sends $\{(x,y)\in\C^2,\:x>0\}$ to the unit disk and $b\to(-1,0)$.
\begin{definition}\label{bubble tree convergence}
Given a sequence of maps $\{\r_n\}_{n\in\N}$, $\r_n:\S_n\to M$ and 
\begin{enumerate}
    \item $u_0:\S_\infty\to M$, $u_0$ is a conformal harmonic map with free boundary on $N$, i.e., $u_0|_{\l\S_\infty}\subset N$,
    \item $v_1,...,v_r:S^2\to M$ non constant minimal spheres,
    \item $\w_1,...,\w_s:D\to M$ non constant minimal disk with free boundary on $N$.
\end{enumerate}
We say that the sequence $\{\r_n\}_{n\in\N}$ converges to $\{u_0,v_1,...,v_r,\w_1,...,\w_s\}$ in bubble tree sense if the following holds
\begin{enumerate}
    \item there exists a finite set $\{x_1,...,x_l\}\in\S_\infty^*$ such that $\r_n\circ\phi^{-1}_n$ converges to $u_0$ in $W^{1,2}_{loc}(\S_\infty^*\setminus\{x_1,...,x_l\})$;
    \item there exists $a_1^n,...,a_r^n\in\S_n$ and scales $\lambda_1^n,...,\lambda_r^n\to 0$ such that 
    $\r_n(a^n_i+\lambda_i^n\cdot)$ converges to $v_i\circ\Pi_S^{-1}$ in $W^{1,2}_{loc}(\R^2\setminus\F_i)$ for $1\leq i\leq r$, where $\Pi_S:S^2\to\R^2$ is the stereographic projection with respect to the north pole and $\F_i$ is a finite set;
    \item there exists $b_1^n,...,b_s^n\in\l\S_n$ and scales $\zeta_1^n,...,\zeta_s^n\to 0$ such that $\r_n\circ (\zeta_j^n\psi^{-1}_{b_j^n}(\cdot))$ converges to $\w_j$ in $W^{1,2}_{loc}(D\setminus\F_j)$ for $1\leq j\leq s$, where $\F_j$ is a finite set.
    
\end{enumerate}
\end{definition}
\begin{definition}
Let $\{\r_n\}_{n\in\N}$ be given as above. For $x\in\S_\infty$ and $\phi_n^{-1}(x)\in\S_n$, we call such $x$ an interior energy concentration point of the sequence $\{\r_n\}_{n\in\N}$ if 
    \[\inf_{r>0}\Big[\int_{B_r(\phi_n^{-1}(x))}|\nabla\r_n|^2\Big]\geq\e_{su}\]
    where $\e_{su}$ is the constant given in \cite[Main Estimate 3.2]{SU}. 
    
    Similarly, for $x\in\bar{\S}_\infty$ and $\phi_n^{-1}(x)\in\l\S_n$, we call such $x$ a boundary energy concentration point of the sequence $\{\r_n\}_{n\in\N}$ if 
    \[\inf_{r>0}\Big[\:\int_{B_r(\phi_n^{-1}(x))}|\nabla\r_n|^2\Big]\geq\e_{F}\]
    where $\e_{F}$ is the constant given in \cite[Proposition 1.7]{AF}.
\end{definition}
\begin{lemma}\label{degenerating convergence}
Given a sequence $\{\r_n\}_{n\in\N}$ as above with bounded energy. There exists 
\begin{enumerate}
    \item $u_0:\S_\infty\to M$, $u_0$ is a conformal harmonic map with free boundary on $N$, i.e., $u_0|_{\l\S_\infty}\subset N$,
    \item $v_1,...,v_r:S^2\to M$ non constant minimal spheres,
    \item $\w_1,...,\w_s:D\to M$ non constant minimal disk with free boundary on $N$,
\end{enumerate}
such that $\{\r_n\}_{n\in\N}$ converges to in bubble tree sense up to subsequence. Moreover, we have the following energy identity:
\begin{equation}\label{energy identity}
\lim_{n\to\infty}E(\r_n,\S_n)=E(u_0,\S_\infty)+\sum_i E(v_i,S^2)+\sum_j E(\w_j,D).    
\end{equation}
\end{lemma}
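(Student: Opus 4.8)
The plan is to combine the domain-level convergence of Lemma \ref{domain convergence} with an iterated blow-up at the energy concentration points, and then to establish the energy identity \eqref{energy identity} by proving a no-neck-energy estimate along the degenerating collars of $\S_\infty$ and along the annuli joining the base map to the bubbles.

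First, the base map and the bubbles. Lemma \ref{domain convergence} produces a finite set $\{x_1,\dots,x_l\}\subset\S_\infty^*$ and a free boundary conformal harmonic map $u_0\in C^0(\bar\S_\infty^*,M)\cap W^{1,2}(\S_\infty^*,M)$ with $u_0|_{\l\S_\infty^*}\subset N$ and $\r_n\circ\phi_n^{-1}\to u_0$ in $W^{1,2}_{loc}(\S_\infty^*\setminus\{x_1,\dots,x_l\})$. By the interior and free-boundary removable singularity theorems (\cite{SU},\cite{AF}) together with the energy bound, $u_0$ extends across the nodes $p^i,q^j,r^k$ of $\S_\infty$, the two branches agreeing at each node; this is item (1). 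At each concentration point $x_s$ one blows up: for an interior point choose $a_s^n\to\phi_n^{-1}(x_s)$ and scales $\lambda_s^n\to0$ and examine $\r_n(a_s^n+\lambda_s^n\,\cdot\,)$, for a boundary point use $\r_n\circ(\zeta_s^n\psi_{b_s^n}^{-1}(\cdot))$ with $b_s^n\in\l\S_n$, $\zeta_s^n\to0$, as in Definition \ref{bubble tree convergence}. Since the Dirichlet energy and the (free boundary) harmonic replacement of Corollary \ref{replacement conti} are conformally invariant, and since $E-\text{Area}$ is a nonnegative quantity that can only decrease on subdomains, the rescaled maps again satisfy hypotheses (1)--(5); the classical $\e$-regularity and bubbling analysis of \cite{SU} (interior) and \cite{AF},\cite{LAX},\cite{PR} (boundary) then yields a nontrivial harmonic sphere $v_s:S^2\to M$, respectively a nontrivial free boundary harmonic disk $\w_s:D\to M$ with $\w_s|_{\l D}\subset N$, and both are conformal, hence minimal. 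Concentration points hidden inside a degenerating collar are handled identically after passing to the cylindrical coordinates of Lemma \ref{collar nbd} and using the half-cylinder models \eqref{type1}, \eqref{type2}. Iterating on each new bubble, and using that every bubble carries energy at least $\min\{\e_{su},\e_F\}>0$ while $\lim_n E(\r_n,\S_n)<\infty$, the process terminates and produces $v_1,\dots,v_r$, $\w_1,\dots,\w_s$ together with the bubble tree convergence of Definition \ref{bubble tree convergence}.

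It remains to prove \eqref{energy identity}, i.e. that no energy concentrates on the necks: for every neck region --- a degenerating collar of $\S_n$, an annulus $\{\e<|x-a_s^n|/\lambda_s^n<\delta\}$ between consecutive scales, or the corresponding half-disk region along $\l\S_n$ --- one has $\lim_{\delta\to0}\limsup_{n\to\infty}\int_{\text{neck}}|\nabla\r_n|^2=0$. In suitable conformal coordinates each neck is a long cylinder $[-T_n,T_n]\times S^1$, a long rectangle $[-T_n,T_n]\times[0,\pi]$ with both long sides mapped into $N$ (boundary necks and type I collars), or a long half-cylinder $[0,T_n]\times S^1$ with its end circle in $N$ (type II collars). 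By construction the neck contains no concentration point, so on every sub-cylinder of fixed length the energy of $\r_n$ lies below the constant $\e_0$ of Lemma \ref{deformation lemma}; hypotheses (4) and (5) then let us replace $\r_n$ on each such piece (an interior geodesic ball, or a geodesic half-ball meeting $\l\S_n$ orthogonally, of controlled radius) by its (free boundary) harmonic replacement with arbitrarily small $W^{1,2}$ error, and the energy convexity of Theorem \ref{energy convexity} and Theorem \ref{energy convexity free boundary}, together with the continuity statement of Corollary \ref{8}, controls this error uniformly along the neck. Thus it suffices to prove the no-neck estimate for (free boundary) harmonic maps on long (half-)cylinders with small energy per period. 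For those, the three-annulus / Pohozaev argument of \cite{CD},\cite{minimaltorus},\cite{LAX},\cite{PR} produces exponential decay of the energy from the two ends towards the core of the neck, \emph{provided} the angular part of the energy there dominates the radial part. This is exactly where hypothesis (3), $\text{Area}(\r_n)-E(\r_n,\S_n)\to0$, enters: it forces $\big|\,|\l_t\r_n|^2-|\l_\phi\r_n|^2\,\big|\to0$ and $\langle\l_t\r_n,\l_\phi\r_n\rangle\to0$ in $L^1$ along the necks, which excludes the radial drift responsible for energy loss in the general (non-conformal) degenerating case of \cite{TP},\cite{Meow}. Summing the decay over the $O(T_n)$ periods makes each neck energy vanish, and adding over the finitely many necks gives \eqref{energy identity}.

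I expect the technical heart to be the type I collar, a long rectangle with \emph{both} long sides constrained to lie in $N$: there the Pohozaev identity must be applied to the map reflected across $N$ in Fermi coordinates (as in the mollification of Claim \ref{regularization}), reducing the free boundary estimate to the interior one on the doubled long cylinder; verifying that the reflection is compatible with the harmonic replacement and that the almost-conformality survives the doubling is the point requiring the most care.
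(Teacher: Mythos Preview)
Your sketch is correct and follows essentially the same architecture as the paper's proof: Lemma \ref{domain convergence} for the base map, iterated rescaling at interior and boundary concentration points for the bubbles, and a no-neck-energy argument on long (half-)cylinders that combines the almost-harmonic property (4)--(5) with the almost-conformality coming from (3). The only notable difference is in the handling of the type~I half-cylinder: the paper does not reflect across $N$ but instead invokes \cite[Theorem~7.8, Lemma~7.9]{LAX} as a black box (a small-energy no-neck result for almost-harmonic maps with partial free boundary, which already incorporates the $\text{Area}-E\to0$ hypothesis), whereas you propose to reduce to the interior case by doubling --- both routes lead to the same conclusion, and your reflection idea is a legitimate alternative provided you check that the reflected map remains almost-harmonic in the sense required by the interior three-annulus lemma.
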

\begin{proof}
Lemma \ref{domain convergence} implies that there exists a finite set $\{x_1,...,x_l\}\in\S_\infty^*$ such that $\r_n\circ\phi^{-1}_n$ converges to $u_0$ in $W^{1,2}_{loc}(\S_\infty^*\setminus\{x_1,...,x_l\})$.

\cite[Theorem 5.6]{minmaxgenus} implies that for each interior energy concentration point $x$ there exist $a^n\in\S_n$ such that $\phi_n(a^n)\to x$ and scales $\lambda^n\to 0$ such that 
    $\r_n(a^n+\lambda^n\cdot)$ converges to a minimal sphere $v\circ\Pi_S^{-1}$ in $W^{1,2}_{loc}(\R^2\setminus\F)$, where $\Pi_S:S^2\to\R^2$ is the stereographic projection with respect to the north pole and $\F$ is a finite set. 
    
So we are left to show the bubble  convergence for the boundary energy concentration point. That is (3) of Definition \ref{bubble tree convergence} and prove the energy identity of it. 

For a boundary energy concentration point $x\in\bar{\S}_\infty$, $\phi_n^{-1}(x)\in\l\S_n$. Let $x_n:=\phi_n^{-1}(x)$. For each $n$, we choose $\p_n$ to be the smallest radius such that
\[\int_{B_{\p_n}(x_n)}|\nabla\r_n|^2=\e_F,\]
where $\e_F$ is the constant given in \cite[Proposition 1.7]{AF}. Since $x$ is a boundary energy concentration point,     $\inf_{r>0}\Big[\:\int_{B_r(x_n)}|\nabla\r_n|^2\Big]\geq\e_{F}$ implies that $\p_n\to 0$.

$B_{\p_n}(x_n)$ is a geodesic ball centered at $y_n\in\l\S_n$ which intersects $\l\S_n$ orthogonally. We can pull the geodesic ball $B_{\p_n}(x_n)$ back to the upper Poincar\'{e} disk $(D^+,ds^2_{-1})$ and denote it by $D^+_{n}$. Now we rescale $D^+_{n}$ to $\{z\in\C,\|z\|\leq 1\}\cap\H$ by $x\to x/\p^0_n$, where $\p^0_n$ is the Euclidean radius of $D^+_{n}$ measured w.r.t. the Euclidean metric on $(D^+,ds^2_0)$. The rescaled hyperbolic metric $ds_{-1}^2$ is
\[ds^2_n=\frac{dz^2}{1-|\p^0_nz|^2},\]
since $\p^0_n\to 0$, $ds^2_n$ converges to flat metric on any compact set on $\H$. Now we consider the following sequence 
\[\{(\r_n(\p_n^0\cdot),(D^+_n,ds^2_n))\}_{n\in\N}.\]
Since \eqref{38} and \eqref{39} are scaling invariant, the sequence $\{(\r_n(\p_n^0\cdot),(D^+_n,ds^2_n))\}_{n\in\N}$ satisfies the assumption of Lemma \ref{domain convergence} so $\{(\r_n(\p_n^0\cdot),(D^+_n,ds^2_n))\}_{n\in\N}$ converges to a harmonic map $\w$ defined on $\H$ in the sense of Lemma \ref{domain convergence}. We can consider $w$ as a harmonic map defined on unit disk $D$ with free boundary $w|_{\l D}\subset N$ by considering the sequence $\r_n(\p_n^0\psi_{x_n}^{-1}(\cdot))$ instead. 

In order to prove \eqref{energy identity}, we need to study the behavior of the limit process on some \textit{half cylinder} neighborhoods of degenerating geodesics. Given a boundary energy concentration point $x$. Let $B_r(x_n)$ be the geodesic ball which centered at $\phi_n^{-1}(x)\in\l\S_n$, with radius $r$ w.r.t. the hyperbolic metric and it intersects $\l\S_n$ orthogonally. We can pull $B_r(x_n)$ back to the upper Poincar\'{e} disk $(D^+,ds^2_{-1})$, centered at the origin, and denote it by $D^+(x_n,r)$. Near a boundary energy concentration point $x_n$, if we compare the energy limit $E(\r_n,D^+(x_n,r))$ with the sum of $E(u_0,D^+(x_n,r))$ and $E(\r_n(\p_n^0\cdot),D^+(x_n,r/\p_n^0))$, we need to count the \textit{neck} part, which is given by 
\[\lim_{r\to 0,R\to\infty}\lim_{n\to\infty}E(\r_n,D^+(x_n,r)\setminus D^+(x_n,R\p_n^0)).\]
We will use $\mathcal{C}'_{a,b}$ to denote the half cylinder $[a,b]\times[0,\pi]$. Under the change of coordinates $(r,\theta)\to(t,\theta)=(\log(r),\theta)$, the half annuli $D^+(x_n,r)\setminus D^+(x_n,R\p_n^0)$ are changed to half cylinder $\mathcal{C}'_{r_n^1,r_n^2}$, where $r_n^1=\log(\p^0_nR)$ and $r_n^2=\log(r)$. The corresponding hyperbolic metric is \[ds^2_{-1}=\frac{e^{2t}}{1-e^{2t}}(dt^2+d\theta^2).\]
When we rescale the metric so that the center slice $\frac{1}{2}(r_n^1+r_n^2)=\frac{1}{2}(\log(\p^0_nR)+\log(r))$ has length $\pi$. Then the metric converges to the flat metric on any compact subset of the infinite long cylinder $\R\times[0,\pi]$.

Now let us see the behavior near degenerating boundary. We need to consider the \textit{half collar} neighborhood of the degenerating boundary. Let $c_n$ denote the degenerating curve; 
\begin{description}
\item[1] Type I node,\\
By \eqref{type1}, we can write the neighborhood of the type I node as the following:
\[\P^1(c_n)=\Big\{(t,\theta):\frac{2\pi}{l}\theta_0(l)\leq t\leq\frac{2\pi}{l}(\pi-\theta_0(l)),0\leq\theta\leq \pi\Big\},\]
where $\theta_0(l)=\tan^{-1}(\sinh(\frac{l}{2}))$, and $l\to 0$ as $c_n$ shrinks to a point. $\P^1(c_n)$ has the following hyperbolic metric: 
\[ds^{2}_{-1}=\Big(\frac{l}{2\pi\sin(\frac{l}{2\pi}t)}\Big)^2(dt^2+d\theta^2).\]
If we rescale the metric so that the center slice (which corresponds to the degenerating curve $c_n$) $\{(\frac{\pi^2}{l},\theta),0\leq\theta\leq\pi\}$ has length $\pi$, then the hyperbolic metric converges to the flat metric on any compact subset of $\R\times[0,\pi]$.
\item[2] Type II node,\\
By \eqref{type2}, we can write the neighborhood of the type II node as the following:
\[\P^2(c_n)=\Big\{(t,\theta):\frac{2\pi}{l}\theta_0(l)\leq t\leq\frac{2\pi}{l}(\frac{\pi}{2}),0\leq\theta\leq 2\pi\Big\},\]
where $\theta_0(l)=\tan^{-1}(\sinh(\frac{l}{2}))$, and $l\to 0$ as $c_n$ shrinks to a point. $\P^2(c_n)$ has the following hyperbolic metric: 
\[ds^{2}_{-1}=\Big(\frac{l}{2\pi\sin(\frac{l}{2\pi}t)}\Big)^2(dt^2+d\theta^2).\]
If we rescale the metric so that slice which corresponds to the degenerating curve $c_n$, $\{(\frac{\pi^2}{l},\theta),0\leq\theta\leq 2\pi\}$, has length $2\pi$, then the hyperbolic metric converges to the flat metric on any compact subset of $\R\times[0,2\pi]$.
\end{description}

Since type II node has the same infinitely long cylinder structure as the interior node, except that $\r_n$ sends one end $\{(\frac{\pi^2}{l},\theta),0\leq\theta\leq 2\pi\}$ to $N$. It's similar to the argument of \cite[Theorem 5.6]{minmaxgenus} so we omit the proof here.

We can see that type I node and the neck of energy concentration point have the same infinitely long half cylinder structure. We donate such half cylinder by $\mathcal{C}'_{r_n^1,r_n^2}$. Let $\e_2$ be the constant given in \cite[Theorem 7.8]{LAX}. We know that there exists a fixed constant $L>0$ such that 
\[E(\r_n,\mathcal{C}'_{t_n-L,t_n+L})\geq\e_2,\]
for some $t_n$, or else it contradicts \cite[Theorem 7.8, Lemma 7.9]{LAX}. That implies that the energy concentrate on some finite part of the cylinder. Now we repeat the bubble convergence procedure of $\r_n$ on $\mathcal{C}'_{r_n^1,r_n^2}$ so that the energy of $\r_n$ on $\mathcal{C}'_{t_n-L,t_n+L}$ is counted by some non-trivial conformal harmonic map (could be minimal spheres or minimal disk with free boundary on $N$) and then it decrease the energy of a definite amount. Then we get the energy of $\r_n$ on some half long cylinder $\mathcal{C}'_{a_n,b_n}$ is bounded by $\e_2>0$, which goes to $0$ as $n\to\infty$, or else it contradicts \cite[Theorem 7.8, Lemma 7.9]{LAX} as $\text{Area}(\r_n,\mathcal{C}'_{a_n,b_n})-E(\r_n,\mathcal{C}'_{a_n,b_n})\to 0.$
\end{proof}
\subsubsection*{Theorem 1.1}
\textit{
For any homotopically nontrivial sweepout $\r\in\O$. If $\W(\O_{\r})>0$, then there exist a bordered Riemann surface of same type $(g,m)$ (possibly a nodal bordered Riemann surface), conformal harmonic map $u_0:\S\to M$ with free boundary $u_0|_{\l\S}\subset N$, finitely many harmonic spheres $v_i:S^2\to M$, finitely many free boundary minimal disk $\w_j:D\to N$ with $\w_j|_{\l D}\subset N$, and a minimizing sequence $\r_n$ in $\O_{\r}$, $\{t_n\}\subset[0,1]$, such that $\r_n(t_n)$ converges to $\{u_0,v_i,\w_j\}$ in the bubble tree sense, and
\[\text{Area}(u_0)+\sum_{i,j}\text{Area}(v_i)+Area(\w_j)=\W(\O_{\r}).\]}
\begin{proof}{Proof of Theorem 1.1}
For a given sweepout $\r\in\O$, with a fixed borderd Riemann surface $\S_0$ of type $(g,m)$ as domain, where $g\geq 1$, $m\geq 1$. We consider $(\r,\s_0)\in\tilde{\O}_{\r,\s_0}$. By Corollary \ref{almost harmonic}, there exists a sequence $\{\r'_j,\s_j\}\subset\tilde{\O}_{(\r,\s_0)}$ such that 
\begin{equation}\label{43}
\lim_{j\to\infty}\max_{t\in[0,1]}\text{Area}(\r'_j(t))=\W(\O_{\r}),
\end{equation}
and 
\begin{equation}\label{44}
\lim_{j\to\infty}\text{Area}(\r'_j(t))-E(\r'_j(t),\s_j(t))=0.    
\end{equation}
By Lemma \ref{deformation lemma}, we can perturb the sequence $(\r_j',\s_j)$ to $(\r_j,\s_j)\subset\tilde{\O}_{(\r,\s_0)}$ so that $(\r_j,\s_j)$ satisfies \eqref{43}, \eqref{44}, and \eqref{35}. \eqref{35} implies that the $W^{1,2}$ difference between $\r_j$ and the free boundary harmonic replacement map on the geodesic balls on $\S_j$ which $\r_j$ has sufficiently small energy goes to $0$ as $j\to\infty$. That implies that the sequence $(\r_j,\s_j)$ satisfies the assumption of Lemma \ref{degenerating convergence}. Thus there exists 
\begin{enumerate}
    \item $u_0:\S_\infty\to M$,  where $\S_\infty$ is the limit of $\S_j$ in the sense of Definition \ref{convergence}. $u_0$ is a conformal harmonic map with free boundary on $N$, i.e., $u_0|_{\l\S_\infty}\subset N$,
    \item $v_1,...,v_r:S^2\to M$ non constant minimal spheres,
    \item $\w_1,...,\w_s:D\to M$ non constant minimal disk with free boundary on $N$,
\end{enumerate}
such that $\{(\r_j,\s_j)\}_{j\in\N}$ converges to in bubble tree sense up to subsequence. Moreover, we have the following energy identity:
\begin{equation}
\lim_{j\to\infty}E(\r_j,\s_j)=E(u_0,\S_\infty)+\sum_i E(v_i,S^2)+\sum_j E(\w_j,D).    
\end{equation}
Which implies 
\[\text{Area}(u_0)+\sum_{i,j}\text{Area}(v_i)+\text{Area}(\w_j)=\W(\O_{\r}).\]
\end{proof}
\bibliographystyle{plain}
\bibliography{ref.bib}	
\end{document}